\documentclass[11pt]{article}
\usepackage{amsmath}
\usepackage{amssymb}
\usepackage{amsthm}
\usepackage{authblk}
\usepackage[english]{babel}
\usepackage{a4wide}
\usepackage{overpic}
\usepackage{graphicx}
\usepackage{tikz}
\usepackage{subfigure}
\usepackage{url}
\usepackage{hyperref}
\hypersetup{
    colorlinks,%
    citecolor=black,
    filecolor=black,
    linkcolor=black,
    urlcolor=black
}

\usepackage{color}
\newtheorem{theorem}{Theorem}[section]
\newtheorem{Def}[theorem]{Definition}

\newtheorem{Thm}[theorem]{Theorem}

\newtheorem{Lem}[theorem]{Lemma}

\numberwithin{equation}{section}
% Aanpassing zodat een nieuwe alinea niet met een insprong begin
%\setlength
%    {\parindent}
%    {0pt}
%\setlength
%    {\parskip}
%    {1.5ex plus 0.5ex minus 0.2ex}

\DeclareMathSizes{10}{10}{10}{10}

    \usepackage{color}

 \DeclareMathOperator*{\supp}{supp}
    
    \DeclareMathOperator{\repart}{Re}
    \DeclareMathOperator{\impart}{Im}
    \renewcommand{\Re}{\repart}
    \renewcommand{\Im}{\impart}
\DeclareMathOperator{\ord}{ord}

\linespread{1}
\author[1]{Daan Huybrechs\thanks{daan.huybrechs@cs.kuleuven.be}}
\author[2]{Arno B.J. Kuijlaars\thanks{arno.kuijlaars@wis.kuleuven.be}} 
\author[1]{Nele Lejon\thanks{nele.lejon@cs.kuleuven.be}} 
\affil[1]{{\small KU Leuven, Department of Computer Science,  Celestijnenlaan 200A, 3001 Leuven, Belgium}}
\affil[2]{{\small KU Leuven, Department of Mathematics, Celestijnenlaan 200B, 3001 Leuven, Belgium}}
\title{Zero distribution of complex orthogonal polynomials with respect to exponential weights}

\begin{document}
\maketitle
\begin{abstract}
We study the limiting zero distribution of orthogonal polynomials with respect to some 
particular exponential weights $e^{-nV(z)}$ along contours in the complex plane. We are especially interested in 
the question under which circumstances the zeros of the orthogonal polynomials accumulate on a single 
analytic arc (one cut case), and in which cases they do not. 
In a family of cubic polynomial potentials $V(z) = - \tfrac{iz^3}{3} + iKz$,  
we determine the precise values of $K$ for which we have the one cut case. 
We also prove the one cut case for a monomial quintic $V(z) = - \tfrac{iz^5}{5} $ on a contour
that  is symmetric in the imaginary axis.  
\end{abstract}

\section{Introduction\label{sect_intro}}
The subject of this paper is the study of the zeros of orthogonal polynomials with respect to 
a varying exponential weight of the form  $ e^{-nV(z)}$ for a  polynomial $V$. 
The weight is considered along an unbounded contour $\Gamma$ in the complex plane that connects
two sectors in the complex plane in which $\Re V(z) \to +\infty$.  
The polynomial $P_n$ is assumed to be monic of degree $n$, and it satisfies the orthogonality condition
\begin{align} \label{orthogonality}
	\int_\Gamma z^k P_{n}(z) e^{-nV(z)} dz=0,	\qquad \text{for } k=0,\ldots, n-1.
\end{align}
Note that \eqref{orthogonality} is an example of non-hermitian orthogonality with respect to a varying weight,
and so  the family  of polynomials $\{P_n(z)\}_n$ is not an orthogonal family in itself, 
since the orthogonality weight varies with $n$.
We are interested in the limiting distribution of the zeros of the polynomials $P_n$
as $n \to \infty$. 

Let $V$ be a polynomial of degree $d \geq 2$, which we will also call the potential.
Then there are $d$ sectors in the complex plane with angle $\frac{\pi}{d}$, 
such that $\Re V\to +\infty$ in each of these sectors. Let us number these 
sectors counterclockwise as  $S_1,S_2,\ldots, S_d$, as shown in Figure \ref{Fig_Sectors}
for the case $V(z) = -\frac{iz^3}{3}$.

The complementary sectors $S_1', S_2', \ldots, S_d'$ are sectors where $\Re V \to -\infty$. 
We label them such that $S_j'$ follows $S_j$ if we go around in the counterclockwise
direction. The complementary sectors are shaded in grey in Figure \ref{Fig_Sectors}.

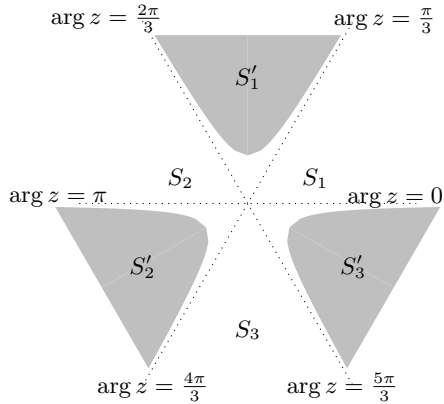
\begin{figure}[!t]
\centering
\begin{tikzpicture} [scale=0.8]
\tikzstyle{every node}=[font=\footnotesize]
\pgftransformscale{0.35 }{
\draw[dotted] (-8,0) -- (8,0);
\draw [dotted](-5,-8.660254040) --  (5,8.660254040);
\draw [dotted](-5,8.660254040) --  (5,-8.660254040);
\draw (3.2,1.2) node{$S_1$};
\draw (-3.2,1.2) node{$S_2$};

\fill[color=gray!50,domain=2.279507057:8,rotate=330] plot (\x,{sqrt(3)/3*sqrt((\x^2)-3*sqrt(3))})-- (8,0) ;
\fill[color=gray!50,domain=2.279507057:8,rotate=330] plot (\x,{-sqrt(3)/3*sqrt((\x^2)-3*sqrt(3))})-- (8,0) ;
\fill[color=gray!50,domain=2.279507057:8,rotate=90] plot (\x,{sqrt(3)/3*sqrt((\x^2)-3*sqrt(3))})-- (8,0) ;
\fill[color=gray!50,domain=2.279507057:8,rotate=90] plot (\x,{-sqrt(3)/3*sqrt((\x^2)-3*sqrt(3))})-- (8,0) ;
\draw (0,6) node{$S'_1$};
\fill[color=gray!50,domain=2.279507057:8,rotate=210] plot (\x,{sqrt(3)/3*sqrt((\x^2)-3*sqrt(3))})-- (8,0) ;
\fill[color=gray!50,domain=2.279507057:8,rotate=210] plot (\x,{-sqrt(3)/3*sqrt((\x^2)-3*sqrt(3))})-- (8,0) ;
\draw (-5,-3) node{$S'_2$};
\draw (0,-6) node{$S_3$};
\draw (5,-3) node{$S'_3$};
\draw(7,0.3)node {$\arg z=0$};
\draw(6.5,8.7)node{$\arg z=\frac{\pi}{3}$};
\draw(-6.7,8.8)node{$\arg z=\frac{2\pi}{3}$};
\draw(-9,0.2)node{$\arg z=\pi$};
\draw(-4.5,-8.7)node{$\arg z=\frac{4\pi}{3}$};
\draw(4.5,-8.7)node{$\arg z=\frac{5\pi}{3}$};
}

\end{tikzpicture}
\caption{The sectors $S_1,S_2,\ldots, S_d$ (white) and the complementary sectors 
$S'_1,S'_2,\ldots, S'_d$ (grey) for a potential of degree $d=3$.\label{Fig_Sectors}}
\end{figure}

For $1 \leq j, k \leq d$ with $j \neq k$, we use $\mathcal T_{j,k}$ to denote the set of 
contours in the complex plane that start in sector $S_j$ and end at infinity in sector $S_k$. By Cauchy's theorem
we have that  integrals such as 
\begin{align*}
\int_\Gamma z^k e^{-nV(z)}dz, \qquad \Gamma \in \mathcal T_{j,k}
\end{align*} do
not depend on the particular choice of $\Gamma$. In particular 
the orthogonal polynomial $P_n$ satisfying \eqref{orthogonality} does not depend on $\Gamma$
but only on the class $\mathcal T_{j,k}$.

It follows from recent work of Bertola \cite{Bert}, see also \cite{BerMo}, that the 
zeros of $P_n$ tend to  a union of analytic arcs, determined by a hyperelliptic curve 
that satisfies the so-called Boutroux condition. A different approach based on 
a max-min variational problem in logarithmic potential theory is due to Rakhmanov \cite{Rak},
and it was worked out in detail for the present setting in  \cite{KuSil}.
We give more details about this approach below. 

It is of  interest to determine the nature
of these arcs, and in particular to find out if the zeros go to one analytic arc (one cut case)
or to a union of two or more analytic arcs. 
We focus in this paper on two potentials, namely the third degree polynomial 
\begin{align} 
V(z) = -\frac{iz^3}{3}+iKz, \label{V1}
\end{align} 
with a real constant $K$, and the monic fifth degree polynomial
\begin{align}
	V(z)=-\frac{iz^5}{5}.\label{V2}
\end{align}
The interest in these particular potentials is inspired by the research of Dea\~no, 
Huybrechs and Kuijlaars \cite{DHK}, who showed that for the case \eqref{V1} with $K=0$ 
the zeros of the  polynomials $P_n$ accumulate on one contour.  This work is motivated
by a computational approach to oscillatory integrals based on steepest descent analysis
\cite{DH}.
The cubic model is also studied in detail in \cite{AMM1} with a combination
of analytical and numerical techniques.  See also \cite{AMM2, BleDea, KMM} for related
results.  The case of a quartic polynomial potential is studied
in detail in \cite{BerTov}.

The sectors $S_1, \ldots, S_d$ in which $\Re V \to +\infty$ are shown in
Figure \ref{Fig_admitted_sect_deg3} for the case of the cubic potential \eqref{V1}
and in Figure \ref{Fig_sect_Gamma_A} for the monic quintic potential \eqref{V2}.
We will restrict ourselves to cases that are symmetric with respect to the imaginary axis,
which is also the reason for the imaginary unit $i$ in \eqref{V1} and \eqref{V2}.
The extra symmetry yields that there is only one possibility in the cubic case, namely we have
to connect sectors $S_2$ and $S_1$ as shown in Figure \ref{Fig_admitted_sect_deg3}, that
is, we consider the family $\mathcal T_{2,1}$, which we will simply call $\mathcal T$ when
dealing with the cubic potential.

\begin{figure}

\subfigure[]{\begin{overpic}[scale=0.25,unit=1mm]
 {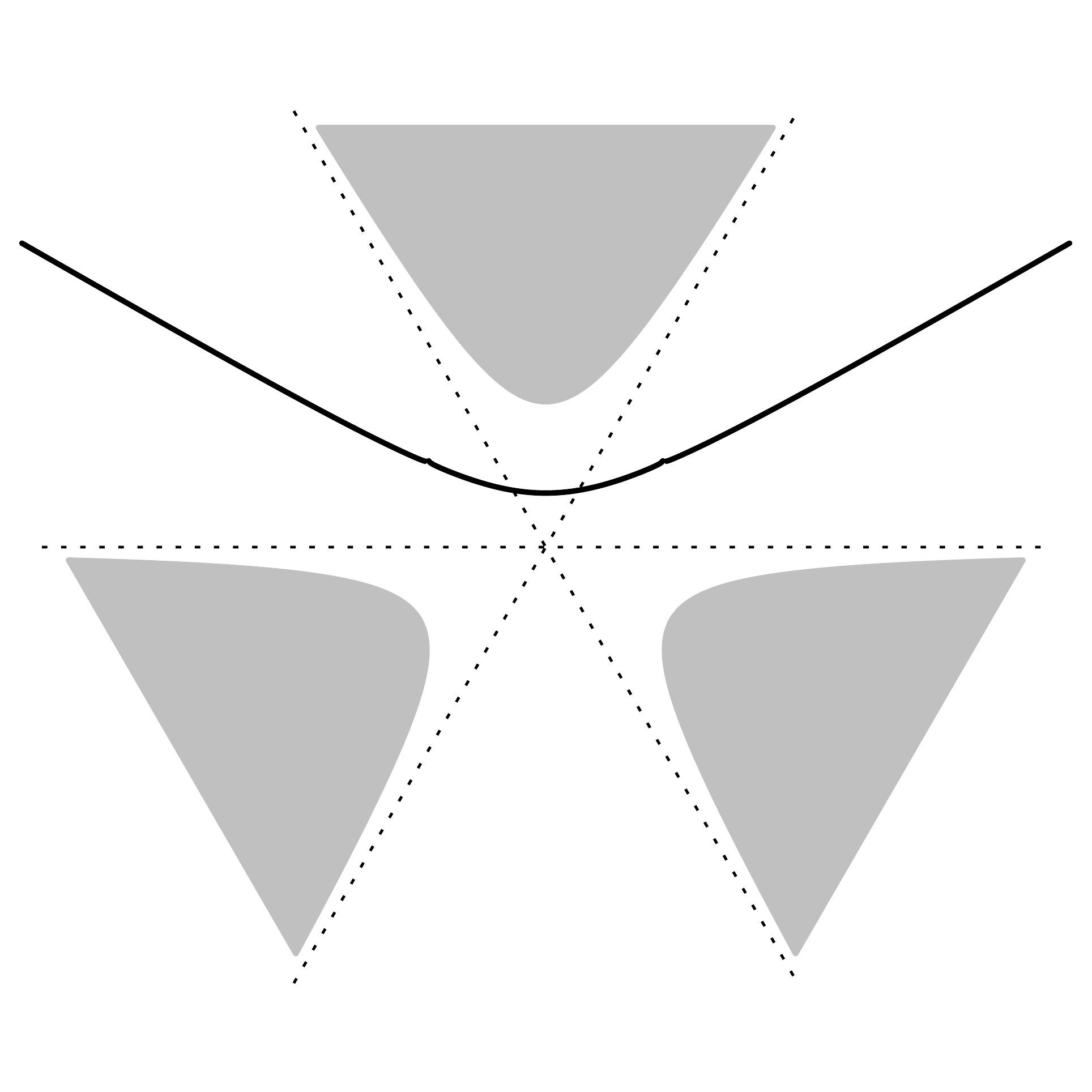}
   \put(12,75){$\Gamma$}
\put(75,55){\footnotesize $S_1$}
\put(45,80){\footnotesize $S'_1$}
   \put(10,55){\footnotesize $S_2$}
   \put(20,35){\footnotesize $S'_2$}
   \put(45,25){\footnotesize $S_3$}
   \put(70,35){\footnotesize $S'_3$}
\end{overpic}
\label{Fig_admitted_sect_deg3}}
 \subfigure[]{\begin{overpic}[scale=.25,unit=1mm]%
      {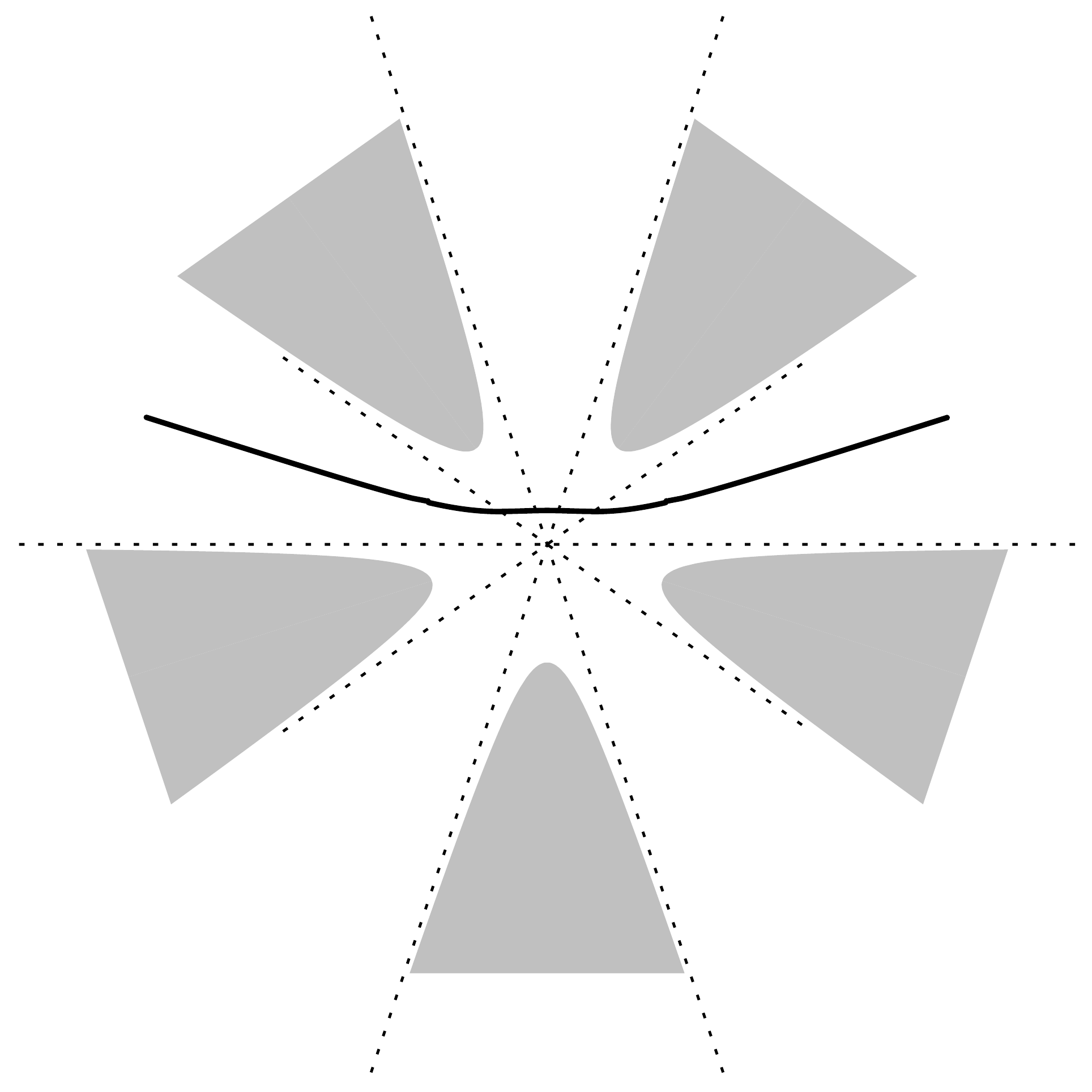}
     \put(80,65){\footnotesize $S_1$}
     \put(18,63){\footnotesize $S_3$}
     \put(65,75){\footnotesize $S'_1$}
      \put(25,75){\footnotesize $S'_2$}
     \put(48,72){\footnotesize $S_2$}
      \put(12,51){$\Gamma$}
     \put(25,40){\footnotesize $S'_3$}
      \put(25,23){\footnotesize $S_4$}
      \put(48,23){\footnotesize $S'_4$}
      \put(65,23){\footnotesize $S_5$}
      \put(70,42){\footnotesize $S'_5$}
    %  \put(40,-5){case A}
 \end{overpic}\label{Fig_sect_Gamma_A}}
      \subfigure[]{\begin{overpic}[scale=.25,unit=1mm]%
      {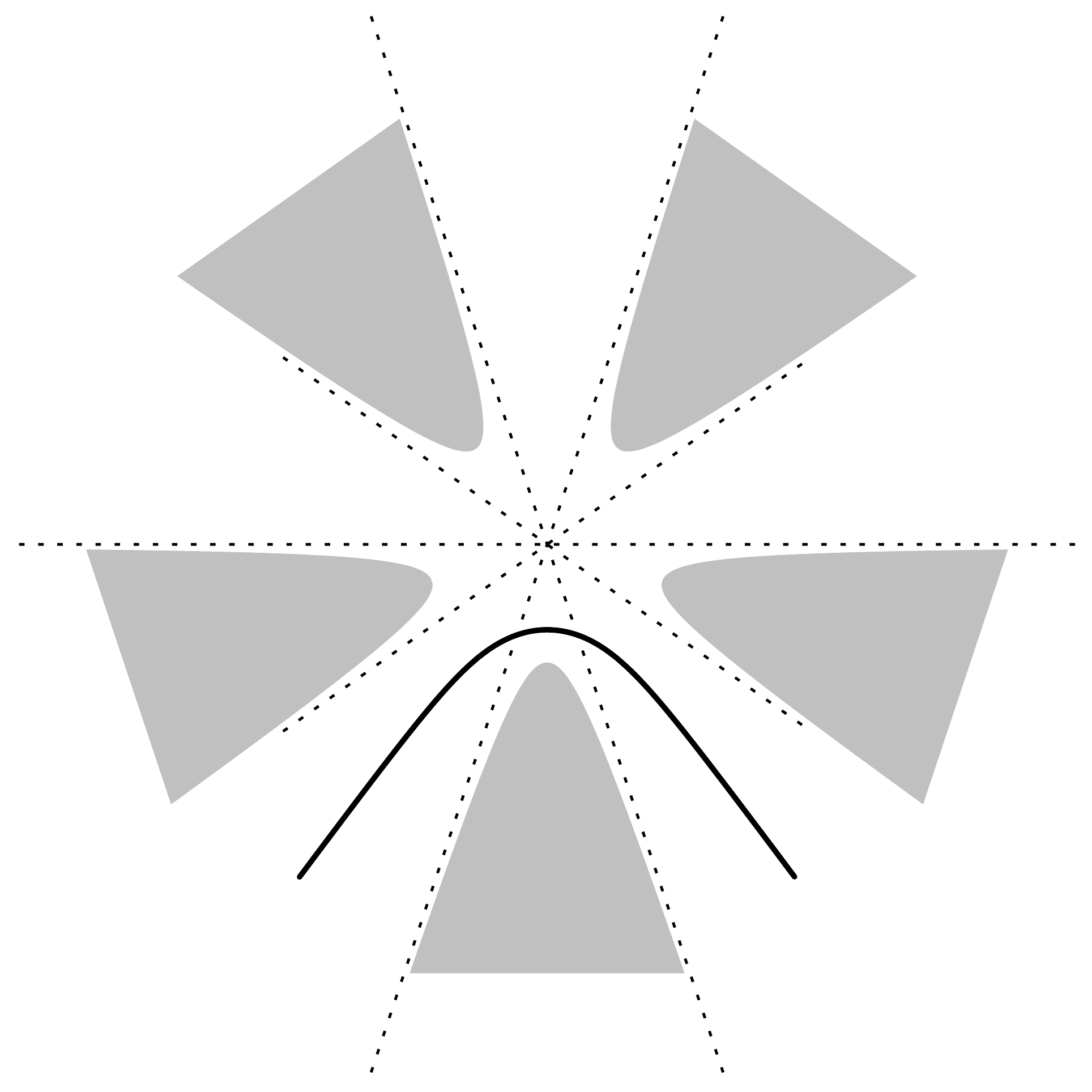}
     \put(80,65){\footnotesize $S_1$}
     \put(18,63){\footnotesize $S_3$}
     \put(65,75){\footnotesize $S'_1$}
      \put(25,75){\footnotesize $S'_2$}
     \put(48,72){\footnotesize $S_2$}
      \put(18,14){$\Gamma$}
     \put(25,40){\footnotesize $S'_3$}
      \put(23,25){\footnotesize $S_4$}
      \put(48,23){\footnotesize $S'_4$}
      \put(70,23){\footnotesize $S_5$}
      \put(70,42){\footnotesize $S'_5$}
    %  \put(40,-5){case B}
 \end{overpic}\label{Fig_sect_Gamma_B}}
 \caption{Examples of curves along which we study orthogonality, for the potential 
of degree $d=3$ (a) and for degree $d=5$ (b and c).\label{Fig_admitted_sect} }
\end{figure}

Our new result for this case is as follows.

\begin{Thm}[Cubic case] \label{Thm_single_cut_V1}
Let $V(z)=-\frac{iz^3}{3}+iKz$ with $K \in \mathbb R$, and let $\mathcal T = \mathcal T_{2,1}$
be the family of contours that connect the two sectors $S_2$ and $S_1$ as shown in Figure \ref{Fig_admitted_sect_deg3}.

Then there is a unique critical value $K^*$ such that 
\begin{enumerate}
\item[\rm (a)] If $K<K^*$, the zeros of the orthogonal polynomials accumulate on one analytic arc.
\item[\rm (b)] If $K=K^*$, the zeros accumulate on one arc, which is not analytic at the point of intersection with the imaginary axis.
\item[\rm (c)] If $K>K^*$, the zeros accumulate on two disjoint arcs.
\end{enumerate}

The constant $K^*$ is determined by solving the equation
\begin{equation} \label{vequation}  
	-3 v \ln(2v) + 6v\ln(\sqrt{4+2v}+2) + (2- 2v) \sqrt{4+2v} = 0, \qquad v > 0 
	\end{equation}
which has a unique solution $v^* \approx 3.150037074 > 0$ and then 
putting\footnote{This and other numerical calculations are performed by \texttt{Maple}.}
\begin{equation} \label{defKstar} 
	K^* =  (v^*)^{1/3} - (v^*)^{-2/3} \approx 1.0005424. 
	\end{equation}
\end{Thm}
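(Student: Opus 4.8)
The plan is to use the spectral-curve / logarithmic-potential description of the limiting zero distribution (Bertola \cite{Bert}, Rakhmanov \cite{Rak}, worked out for this setting in \cite{KuSil}): the limiting measure $\mu$ is encoded by a function $y$ with $y^2$ a polynomial of degree $2d-2 = 4$, normalized so that $y(z) \sim V'(z) - 2/z$ at infinity, with $\supp\mu$ lying on trajectories of the quadratic differential $-y^2\,dz^2$, and with $\mu$ inheriting the symmetry $z \mapsto -\bar z$ of the problem. \emph{Step 1: the one-cut ansatz.} I first look for a genus-zero solution $y^2 = -(z-c)^2(z-a)(z-b)$. The symmetry forces $b = -\bar a$ and $c = -\tfrac12(a+b) \in i\mathbb{R}$; writing $a = \alpha + i\beta$ (so $c = -i\beta$) and matching the coefficients of $z^1$, $z^0$, $z^{-1}$ in the expansion of $y$ at infinity yields $\alpha^2 = 2K + 2\beta^2$ together with $\alpha^2\beta = 2$, hence the single relation $\beta^3 + K\beta = 1$, i.e. $K = \beta^{-1} - \beta^2$. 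For each $K \in \mathbb{R}$ this determines a unique $\beta = \beta(K) > 0$, strictly decreasing in $K$, and putting $v = \beta^{-3}$ recovers exactly the parametrization $K = v^{1/3} - v^{-2/3}$ from \eqref{defKstar}, with $\alpha^2 = 2v^{1/3}$.

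\emph{Step 2: the obstruction.} By the characterization in \cite{KuSil}, this genus-zero candidate is the equilibrium measure precisely when the effective potential $\phi(z) = \int_a^z y(s)\,ds$ — holomorphic off the cut and vanishing on it — satisfies $\Re\phi \ge 0$ off the cut. Since $\Re\phi$ is harmonic away from the cut and the zeros of $y$ and tends to $+\infty$ in the two admissible sectors, its off-cut minimum is attained at the only zero of $y$ not on the cut, namely the double point $c = -i\beta$. Hence the one-cut picture is valid iff $\Re\phi(c) \ge 0$, and $K^*$ is characterized by $\Re\phi(c) = 0$.

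\emph{Step 3: the explicit equation.} I then evaluate $\phi(c) = \int_a^c y(s)\,ds$. The substitution $s = c + w$ turns the quadratic under the root into $(w - (a-c))(w - (b-c)) = (w - 2i\beta)^2 - \alpha^2$, and $\int_0^{\alpha+2i\beta} w\sqrt{(w-2i\beta)^2 - \alpha^2}\,dw$ is elementary, giving an algebraic part plus a logarithm whose branches are pinned down by analytic continuation along the path. Writing $R := \sqrt{4\beta^2 + \alpha^2}$ one finds, using $\alpha^2\beta = 2$,
\[ \Re\phi(c) = -\tfrac13 R^3 + 2\beta^2 R - 2\ln\tfrac{R-2\beta}{\alpha}. \]
Inserting $\beta = v^{-1/3}$, $\alpha = \sqrt{2}\,v^{1/6}$, $R = v^{-1/3}\sqrt{4+2v}$ and using $(\sqrt{4+2v}-2)(\sqrt{4+2v}+2) = 2v$ shows that $3v\,\Re\phi(c)$ equals the left-hand side of \eqref{vequation}. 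A short calculus argument — that left-hand side is positive as $v \to 0^+$, tends to $-\infty$ as $v \to \infty$, and changes sign only once — gives the unique root $v^* \approx 3.150037074$, and then $K^* = (v^*)^{1/3} - (v^*)^{-2/3} \approx 1.0005424$.

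\emph{Step 4: the trichotomy.} Since $\Re\phi(c)$ and the left-hand side of \eqref{vequation} agree up to the positive factor $3v$, we get $\Re\phi(c) > 0 \iff v < v^* \iff K < K^*$; the $K=0$ case is known to be one-cut \cite{DHK} and has $v = 1 < v^*$, which fixes the direction of the inequality. For $K < K^*$ it remains to check that the genus-zero data actually realize the required S-property — the $\Re\phi = 0$ trajectory leaving $a$ is a simple arc ending at $b = -\bar a$ and crossing the imaginary axis — so the zeros accumulate on this single analytic arc (case (a)). For $K = K^*$ the double point $c$, where $y^2$ has a double zero, lies on the support; the limiting density vanishes there, so the arc has a corner, and $c = -i\beta^*$ is on the imaginary axis (case (b)). For $K > K^*$ the inequality $\Re\phi(c) \ge 0$ fails, so there is no genus-zero equilibrium measure; as $y^2$ has degree $4$ the curve must then have genus one (four simple, $z \mapsto -\bar z$-symmetric branch points) and the support consists of exactly two disjoint arcs (case (c)). The main obstacle is the rigorous part of Step 4 for $K < K^*$ — establishing the global trajectory structure of $-y^2\,dz^2$, not merely the sign of $\Re\phi(c)$ — together with the careful branch bookkeeping in Step 3 needed to land on \eqref{vequation} with the correct sign.
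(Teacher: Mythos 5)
Your Steps 1 and 3 reproduce the computational core of the paper's argument: the one-cut ansatz with a double zero on the imaginary axis gives (in the paper's notation $z_0=-ia$, $z_{1,2}=\mp b+ia$) exactly the relations $ab^2=2$ and $K=a^{-1}-a^2$ of Lemma \ref{LQ1}, and your explicit expression $-\tfrac13 R^3+2\beta^2R-2\ln\frac{R-2\beta}{\alpha}$ is precisely $2\pi F(-a)$ from \eqref{FoniR}, so the substitution $v=\beta^{-3}$ does land on \eqref{vequation}. The threshold quantity you identify is therefore the right one. But your Step 2 justification is not correct as stated: $\phi=\int y$ is analytic at the double zero $c$ (a zero of $y$ creates no singularity of $\phi$), so $\Re\phi$ is harmonic in a full neighbourhood of $c$ and cannot attain an ``off-cut minimum'' there. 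What actually singles out $c$ is a one-dimensional statement: the paper computes $F(y)=\Im D(iy)$ on the imaginary axis, finds $F'(y)=-\frac{1}{2\pi}(y+a)\sqrt{(y-a)^2+b^2}$, so the restriction of the level function to $i\mathbb R$ has its unique maximum at $z_0$, and hence the level set $\Im D=0$ emanating from $z_1$ reaches the imaginary axis if and only if $F(-a)\ge 0$. Your criterion agrees numerically, but the variational heuristic would need a separate argument that any admissible contour must cross the saddle at $c$, and you fix the direction of the inequality only by appeal to the known $K=0$ case. The same monotonicity in $a$, via the derivative formula \eqref{dFda}, is also what proves uniqueness of the root of \eqref{vequation}, which you assert but do not prove.

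More seriously, the part you flag as ``the main obstacle'' is the bulk of the proof and is entirely missing. For $K<K^*$ one must show (i) that one of the three critical trajectories leaving $z_1$ actually terminates at $z_2$ --- the paper does this by a counting argument: if not, all three escape to infinity in the left half-plane at the angles $2\pi/3,\pi,4\pi/3$, and then the two non-critical trajectories through the two zeros $iy_1,iy_2$ of $F$ would require two further directions at infinity carrying the same level $\Im D=0$, contradicting Lemma \ref{differentD}; and (ii) that the connecting arc extends by vertical trajectories to a contour in the class $\mathcal T_{2,1}$ (Lemma \ref{extension}, which in particular requires ruling out the crossing at $iy_2$ by showing $\Im D<0$ on the open segment from $z_1$ to $z_2$), after which the $S$-property is verified as in \cite{DHK} so that Theorem \ref{theoremGR} applies. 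For $K>K^*$ your conclusion also needs an argument: the paper deduces it from the fact that the support of a one-cut equilibrium measure would have to be a critical trajectory of the \emph{unique} candidate $Q$ joining its two simple zeros, and Lemma \ref{LQ1_candidate}(c) shows no such trajectory exists because the level line $\Im D=0$ never meets the imaginary axis. Without these trajectory-theoretic steps the proposal establishes the formula for $K^*$ but not the trichotomy.
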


In Figures \ref{gamma_sub}, \ref{gamma_crit}, and \ref{gamma_super} we plotted 
the analytic arcs on which the zeros accumulate for the three cases $K < K^*$, $K= K^*$ and $K > K^*$.
The endpoints of the arcs are denoted by $z_1$, $z_2$ (in case $K \leq K^*$) and by
$z_1, \ldots, z_4$ (in case $K > K^*)$. The arcs have analytic continuations that
are also shown in the figures. The arcs together with their analytic continuation
can be used for the contour $\Gamma \in \mathcal T$, and this contour is in a
certain sense ideal for
the orthogonality \eqref{orthogonality}. 

\begin{figure}[!t]
\centering
\subfigure[$K<K^*$ ]{
\begin{overpic}[scale=.24,unit=1mm]%
{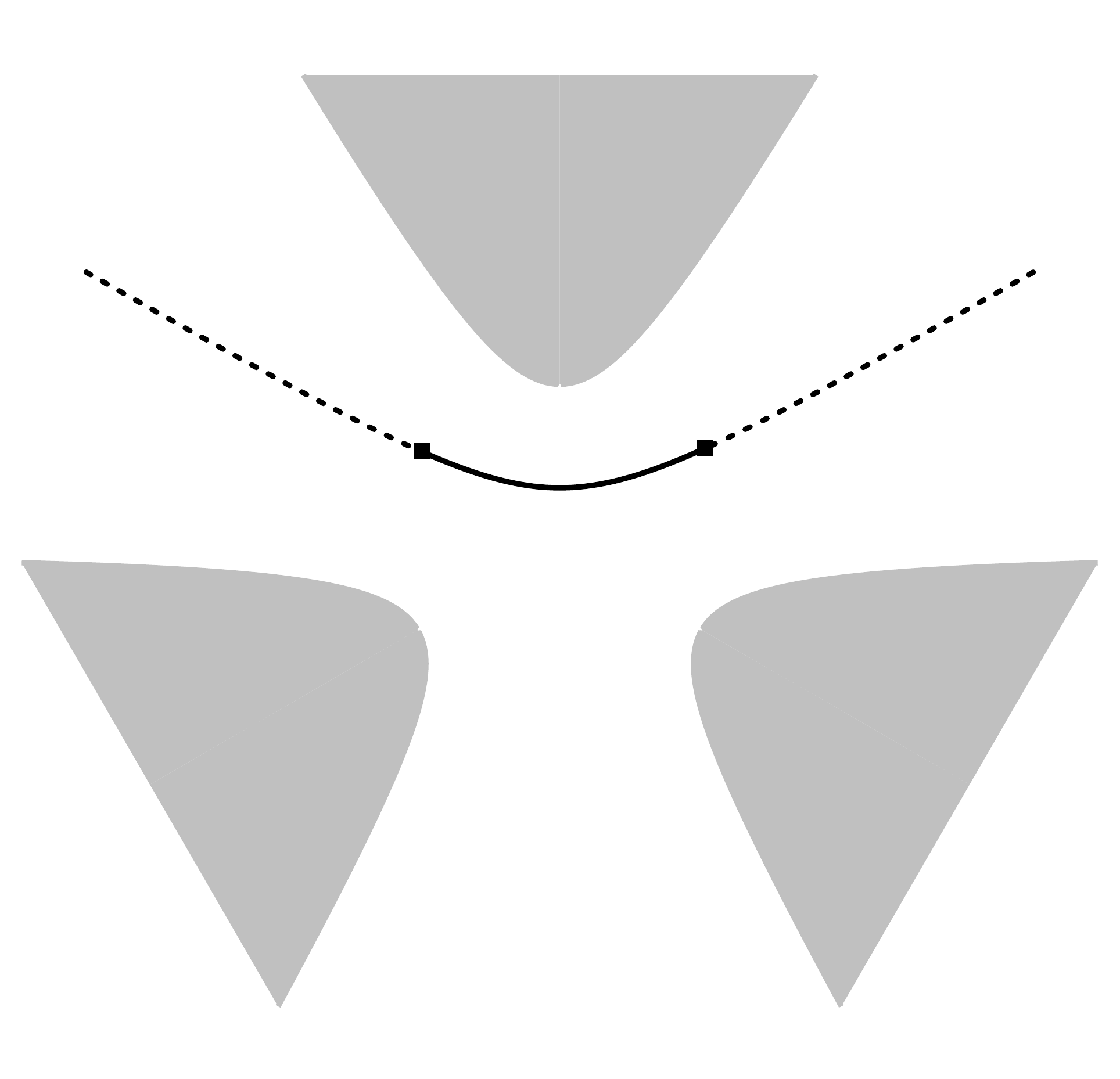}
\put(33,52){\footnotesize $z_1$}
   \put(64,52){\footnotesize $z_2$}
   \put(44,80){\footnotesize $S'_1$}
   \put(75,72){\footnotesize $S_1$}
   \put(20,72){\footnotesize $S_2$}
   \put(20,25){\footnotesize $S'_2$}
   \put(44,15){\footnotesize $S_3$}
   \put(75,25){\footnotesize $S'_3$}
   \put(12,60){\footnotesize $\Gamma$}\label{gamma_sub}
\end{overpic}
} 
\subfigure[$K=K^*$]{\begin{overpic}[scale=.24,unit=1mm]%
     {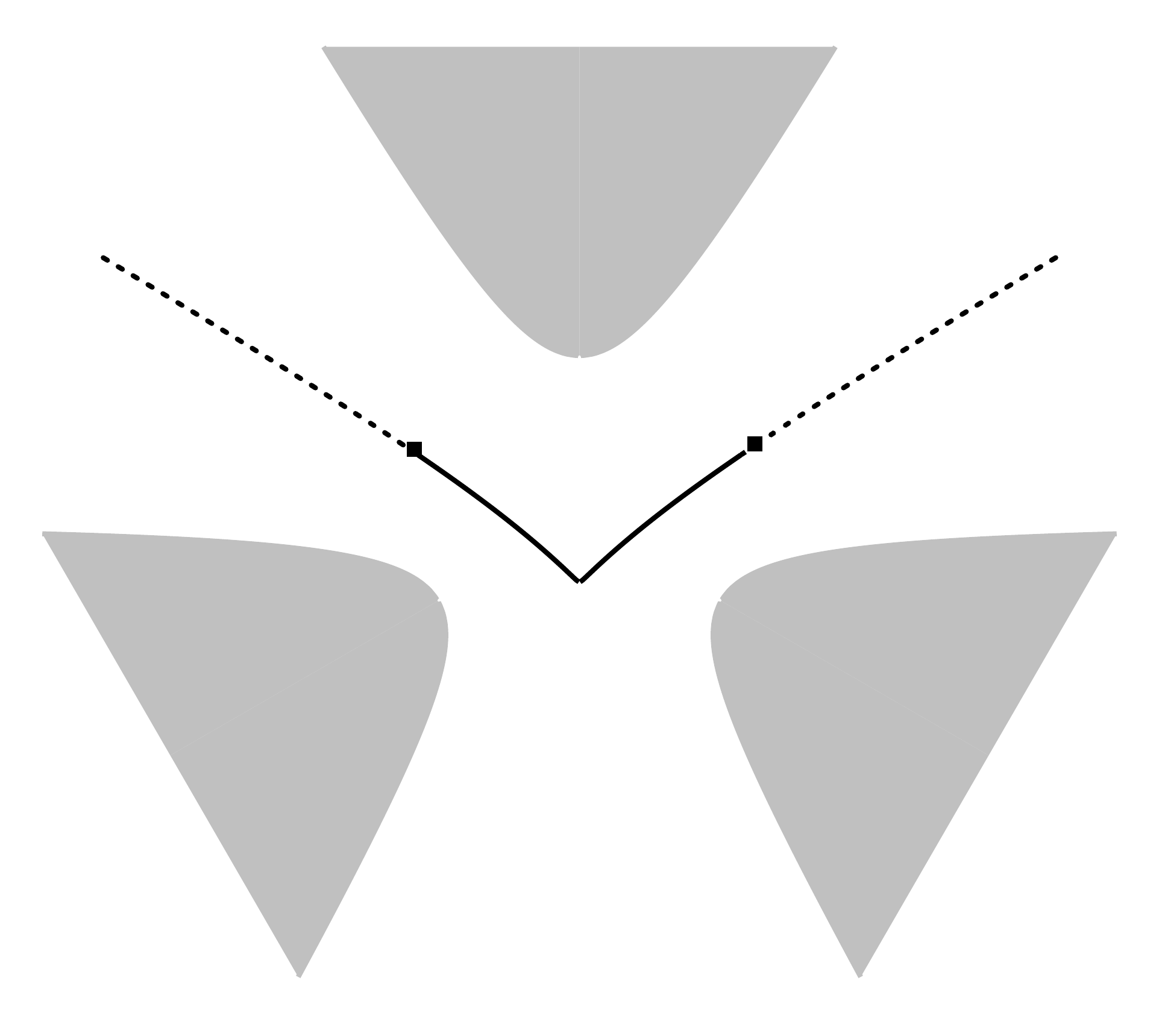}
     \put(26,50){\footnotesize $z_1$}
   \put(72,50){\footnotesize $z_2$}
      \put(43,77){\footnotesize $S'_1$}
   \put(75,72){\footnotesize $S_1$}
   \put(20,72){\footnotesize $S_2$}
   \put(20,25){\footnotesize $S'_2$}
   \put(44,15){\footnotesize $S_3$}
   \put(75,25){\footnotesize $S'_3$}
   \put(12,56){\footnotesize $\Gamma$}
\end{overpic}\label{gamma_crit}
} 
\subfigure[$K>K^*$ ]{
\begin{overpic}[scale=.24,unit=1mm]%
      {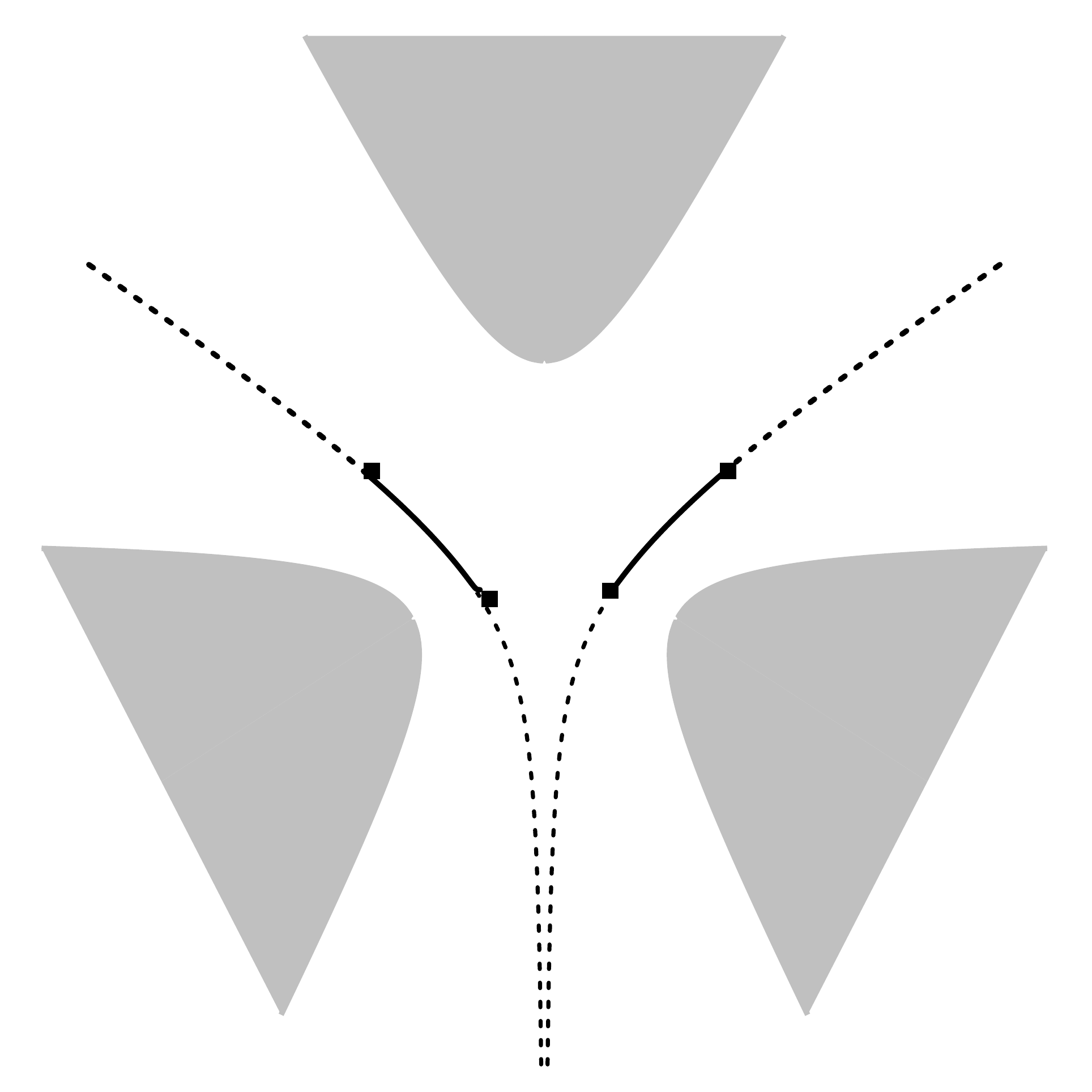}
  \put(26,56){\footnotesize $z_1$}
    \put(72,56){\footnotesize $z_2$}
    \put(38,42){\footnotesize $z_3$}
    \put(56,40){\footnotesize $z_4$}
          \put(43,81){\footnotesize $S'_1$}
   \put(75,72){\footnotesize $S_1$}
   \put(20,72){\footnotesize $S_2$}
   \put(20,25){\footnotesize $S'_2$}
   \put(41,15){\footnotesize $S_3$}
   \put(75,25){\footnotesize $S'_3$}
   \put(12,56){\footnotesize $\Gamma$}
\end{overpic}
\label{gamma_super}}
\caption{The figures illustrates Theorem \ref{Thm_single_cut_V1}. The solid lines are 
the analytic arcs on which 
the zeros of $P_n$ accumulate as $n \to \infty$. 
Panel (a) shows the result for $K=0$ as a typical example for $K<K^*$, 
panel (b) shows the arc for $K = K^*$ and panel (c)  shows the result for $K=2$, as an example of $K > K^*$. 
The dotted lines are the analytic extensions that together with the analytic arcs form the contour $\Gamma$. 
\label{Scurves_V1}}
\end{figure}

In the quintic case there are two possible combinations of sectors that respect the symmetry
in the imaginary axis. We can use either $\mathcal T_{3,1}$ or $\mathcal T_{4,5}$ as shown
in Figures \ref{Fig_sect_Gamma_A} and \ref{Fig_sect_Gamma_B}. 
In both cases we find that zeros of the orthogonal polynomials accumulate on one arc.

 \begin{theorem}[Quintic case] \label{Thm_single_cut_V2}
  Let $V(z) = - \frac{iz^5}{5}$. Then for both choices $\mathcal T_{3,1}$ and $\mathcal T_{4,5}$
	the  zeros of the orthogonal polynomials accumulate on one analytic arc.
  \end{theorem}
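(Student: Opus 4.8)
The plan is to follow the logarithmic potential-theoretic approach of Rakhmanov \cite{Rak} as worked out in \cite{KuSil}, whose existence theory guarantees that the limiting zero distribution is the measure $\mu$ solving the max-min energy problem over contours $\Gamma \in \mathcal T_{3,1}$ (resp.\ $\mathcal T_{4,5}$), and that its support is a finite union of analytic arcs. So the real content of Theorem \ref{Thm_single_cut_V2} is to show the support is connected. The strategy is to exhibit an explicit candidate for the spectral curve: set $Q(z) = (V'(z))^2 - \text{(polynomial correction)}$ and look for a function $R(z)^2 = Q(z)$ where $Q$ is a polynomial whose structure is forced by the Boutroux condition (all periods of $R(z)\,dz$ over cycles are purely imaginary) together with the behaviour $R(z) \sim V'(z)$ at infinity and the imaginary-axis symmetry. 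With $V'(z) = -iz^4$, the natural one-cut ansatz is $R(z)^2 = -z^8 + (\text{lower order})$ with $R(z) = -i\,z^2\sqrt{(z^2-a^2)(z^2-b^2)}$ for suitable $a, b$; I would use the symmetry $z \mapsto -z$ (under which $V$ is even, so the cut must be symmetric) to reduce the number of free parameters, and then determine them from the Boutroux moment conditions.

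First I would set up the $g$-function / $\phi$-function: define $\phi(z) = \int_{z_*}^z R(s)\,ds$ from an endpoint, so that the candidate support is the union of the critical trajectories $\Re \phi = 0$ emanating from the branch points, and the complementary dotted extension is where $\Re\phi$ is monotone increasing toward the sectors $S_j$. The one-cut claim then amounts to: (i) the algebraic equation for the parameters $a,b$ (equivalently the single real modulus, after using symmetry) has a solution with the Boutroux condition satisfied, and (ii) the global trajectory structure of the quadratic differential $R(z)^2 dz^2 = \phi'(z)^2 dz^2$ is such that exactly one finite critical trajectory connects the two branch points on (or symmetric about) the imaginary axis, with the remaining trajectories running off to the correct sectors. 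Step (i) is a computation analogous to the one producing \eqref{vequation} in the cubic case, but simpler because for the monomial quintic there are no extra parameters like $K$ — scaling $z \mapsto c z$ can be used to normalise, so I expect the curve to be rigid and the moment equations to pin down the branch points explicitly (possibly in closed form).

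The main obstacle I anticipate is step (ii): proving that the global trajectory picture is genuinely one-cut and not, say, two symmetric arcs, and that it is compatible with connecting the prescribed pair of sectors. This is where the two cases $\mathcal T_{3,1}$ and $\mathcal T_{4,5}$ must be distinguished — the spectral curve and its branch points may well be the same (since $V$ is fixed), but the admissible contour $\Gamma$, hence which trajectories are selected as the "cut" versus the "extension", differs. The argument here is a trajectory-counting / Teichm\"uller-type argument for the quadratic differential $R(z)^2\,dz^2$ on the sphere: count zeros and poles (a single pole of order $12$ at infinity, four simple zeros at the branch points and a zero of order $4$ at the origin), apply the Teichm\"uller lemma on the number of trajectories entering each singularity, and use the imaginary-axis symmetry together with the Boutroux condition to exclude recurrent trajectories and force the one finite critical trajectory to be the segment (or arc) joining the two branch points. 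I would also need a continuity/deformation argument — e.g.\ starting from a configuration where the answer is clear and tracking that no branch points collide — to rule out a topological change.

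Finally I would verify the standard consistency conditions that make the candidate the actual solution of the max-min problem: that $\Re \phi \le 0$ on the arc (so it is an $S$-curve, with the symmetry of the normal derivatives), that $\Re\phi \ge 0$ along the dotted extension into the sectors $S_j$, and that the resulting measure $\mu = \frac{1}{\pi i} R_+(x)\,dx$ on the arc is a positive probability measure; by the uniqueness in the Rakhmanov--Kuijlaars--Silva theory, this identifies it with the limiting zero distribution and completes the proof. I expect these verifications to be routine inequalities on $\Re\phi$ of the same type as in the cubic case, once the explicit branch points from step (i) are in hand.
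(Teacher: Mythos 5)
There is a genuine gap, and it occurs at the very first step: your ansatz for the spectral curve is structurally wrong. You write $R(z)=-iz^2\sqrt{(z^2-a^2)(z^2-b^2)}$, i.e.\ $Q(z)=R(z)^2=-z^4(z^2-a^2)(z^2-b^2)$, justified by "the symmetry $z\mapsto -z$ (under which $V$ is even)". But $V(z)=-\tfrac{iz^5}{5}$ is \emph{odd}, not even; the only symmetry available is reflection in the imaginary axis, $Q(z)=\overline{Q(-\bar z)}$. Moreover the normalization $\int d\mu=1$ forces
\[ Q(z)=\tfrac14 V'(z)^2-\tfrac{V'(z)}{z}+\mathcal O(z^{2})=-\tfrac14 z^8+iz^3+\mathcal O(z^2), \]
and the nonzero $z^3$ coefficient already rules out any even polynomial $Q$. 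Your ansatz also contradicts the conclusion you are trying to prove: every zero of $Q$ of odd multiplicity must lie in $\supp\mu_\Gamma$ (since $Q^{1/2}$ is single-valued off the support), so four simple zeros would produce two arcs, not one. The correct one-cut ansatz is two simple zeros $\mp b+ci$ and \emph{three double} zeros, one on the imaginary axis at $-ai$ and a pair $\mp d+ei$; this gives five real parameters that must be determined from a genuinely nonlinear system (the paper does this via a resultant computation, yielding complicated algebraic numbers, not a curve that is "rigid" by scaling). A further error: you suggest the spectral curve "may well be the same" for $\mathcal T_{3,1}$ and $\mathcal T_{4,5}$ since $V$ is fixed. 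It is not: the equilibrium measure depends on the contour class, and the two classes correspond to two \emph{distinct} polynomials $Q_1\neq Q_2$ (with visibly different branch points); the content of the theorem is that each of these two candidates does realize a one-cut configuration for its respective class.

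The later stages of your plan (establishing a single finite critical trajectory between the two simple zeros, controlling the unbounded trajectories via Teichm\"uller's lemma and the directions at infinity, then verifying the $S$-property and positivity to invoke the Gonchar--Rakhmanov and Rakhmanov--Kuijlaars--Silva theory) are the right ingredients and match the paper's strategy in outline. But as written the argument cannot be repaired without discarding the ansatz: the trajectory-counting in step (ii) is performed for a quadratic differential with the wrong zero structure (an order-$4$ zero at the origin and four branch points), so none of the subsequent topological conclusions apply to the actual problem.
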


The analytic arcs on which the zeros of the orthogonal polynomials 
accumulate are plotted in Figure \ref{Scurves_V2} for the cases $\mathcal T_{3,1}$
and $\mathcal T_{4,5}$. The analytic extensions of the arcs are also shown in the figure.
 \begin{figure}[!h]
 \begin{center}
 \subfigure[Class $\mathcal{T}_{3,1}$]{
\begin{overpic}[scale=.35,unit=1mm]%
      {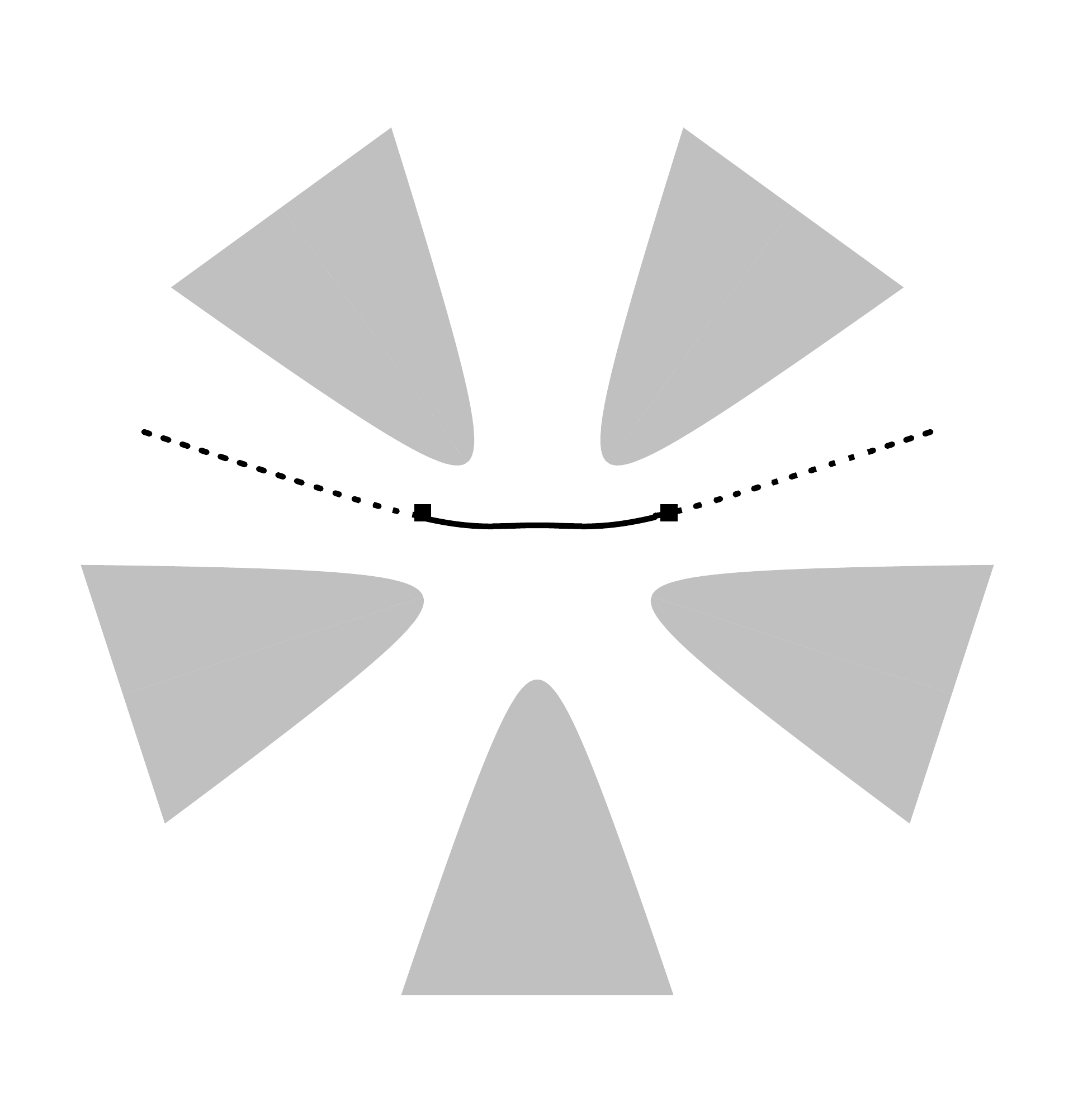}
  \put(33,50){ \footnotesize $z_1$}
    \put(57,50){ \footnotesize $z_2$}
    \put(46,80){\footnotesize $S_2$}
        \put(26,73){\footnotesize $S'_2$}
         \put(65,73){\footnotesize $S'_1$}
         \put(75,64){\footnotesize $S_1$}
         \put(17,64){\footnotesize $S_3$}
         \put(17,54){\footnotesize $\Gamma$}
         \put(20,40){\footnotesize $S'_3$}
				 \put(25,24){\footnotesize $S_4$}
         \put(46,20){\footnotesize $S'_4$}
				 \put(67,24){\footnotesize $S_5$}
         \put(72,40){\footnotesize $S'_5$}
\end{overpic}
}
\hspace{1cm}
\subfigure[Class $\mathcal{T}_{4,5}$]{
\begin{overpic}[scale=.35,unit=1mm]%
      {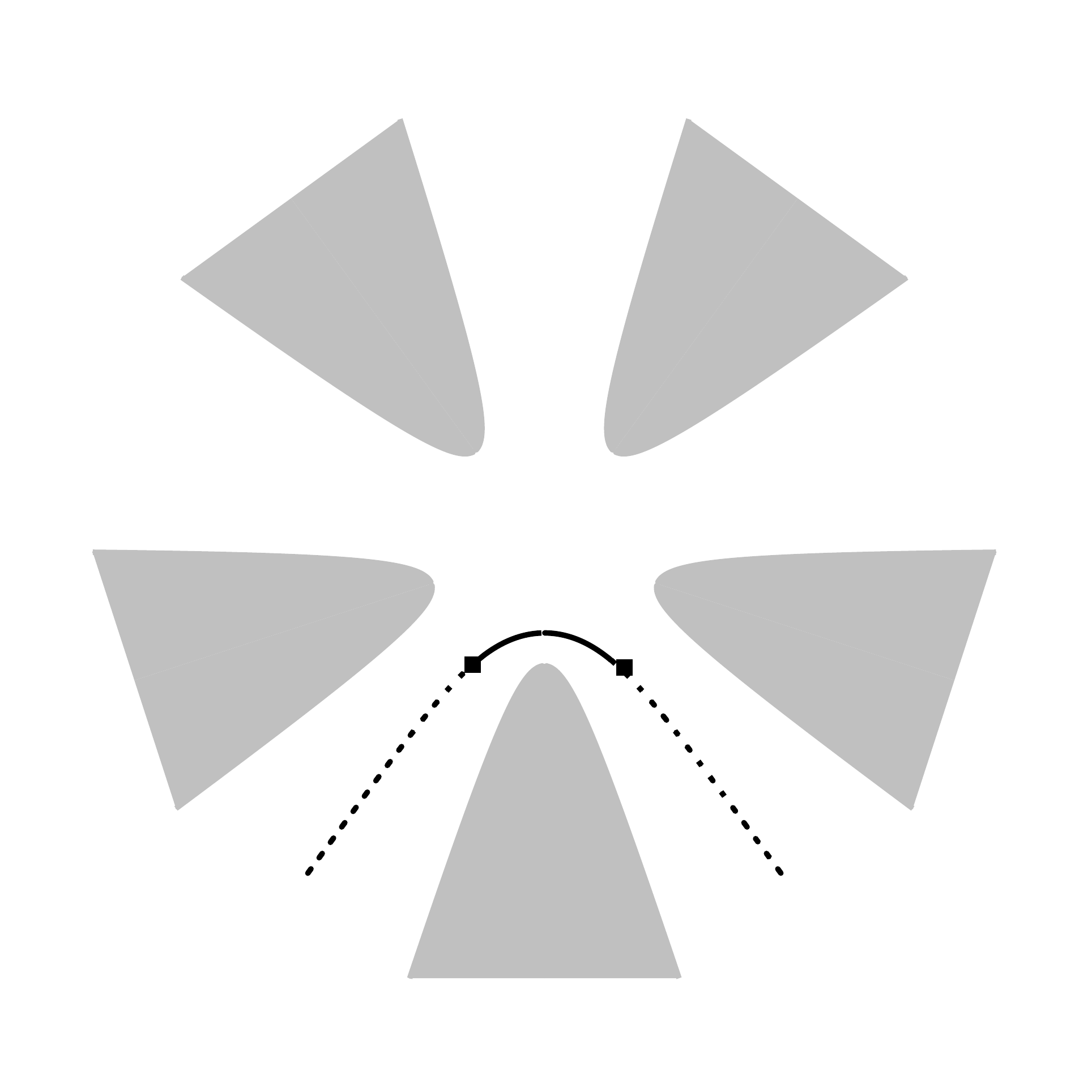}
        \put(38,40){\footnotesize $z_1$}
    \put(59,40){\footnotesize $z_2$}
        \put(46,80){\footnotesize $S_2$}
        \put(26,73){\footnotesize $S'_2$}
         \put(65,73){\footnotesize $S'_1$}
         \put(75,62){\footnotesize $S_1$}
         \put(17,62){\footnotesize $S_3$}
         \put(33,24){\footnotesize $\Gamma$}
         \put(20,40){\footnotesize $S'_3$}
				 \put(24,24){\footnotesize $S_4$}
         \put(46,20){\footnotesize $S'_4$}
				 \put(68,24){\footnotesize $S_5$}
         \put(72,40){\footnotesize $S'_5$}
\end{overpic}
}
\end{center}
  \caption{The figure shows the analytic arc on which the zeros of the orthogonal polynomials accumulate
	for the potential $V(z)=-\frac{iz^5}{5}$ and for the two choices of $\mathcal T$. 
	The extended contour belongs to the class $\mathcal T_{3,1}$ in the left panel (a) 
	and to $\mathcal T_{4,5}$ in the right panel (b).\label{Scurves_V2}}
 \end{figure}

It is an open problem to determine the nature of the analytic arcs for higher degree potentials.
It will, for example, be interesting to find out if for any monomial $V(z) = - \frac{iz^d}{d}$
and any class of contours $\mathcal T$ that respects the  symmetry  in the imaginary axis
the zeros accumulate on one single arc, or not.

The proofs of Theorems \ref{Thm_single_cut_V1} and \ref{Thm_single_cut_V2} rely on a result
of Gonchar and Rakhmanov \cite{GR}, see Theorem \ref{theoremGR} below, which says that
if a contour $\Gamma \in \mathcal T$ has the $S$-property in the external field $\Re V$,
then the zeros of the polynomials $P_n$ tend to  $\Gamma$, and then the equilibrium measure of $\Gamma$
in the external field $\Re V$ is the limit of the normalized zero counting measures as $n \to \infty$.
There is always a contour with the $S$-property in $\mathcal T$.  The support of its
equilibrium measure in external field $\Re V$ consists of critical trajectories of 
a quadratic differential $-Q(z) dz^2$ where $Q$ is a certain polynomial that is determined
by $V$ and $\mathcal T$. We give necessary background
on $S$-curves and quadratic differentials in Section \ref{sect_prelim}.

The proofs of Theorems \ref{Thm_single_cut_V1} and \ref{Thm_single_cut_V2}
are in Sections \ref{sect_deg3} and  \ref{sect_deg5}. In both proofs, we start
by collecting properties of $Q$ that are satisfied assuming we are in the one cut case.
This gives us only one candidate for $Q$ in the cubic case of Theorem \ref{Thm_single_cut_V1}
and two candidates in the quintic case of Theorem \ref{Thm_single_cut_V2}.
We show that a critical 
trajectory of $-Q(z)dz^2$ connects the two simple zeros of $Q$ (in the
cubic case we have to restrict to $K < K^*$) and that this critical trajectory
has an analytic continuation to a contour in the desired class $\mathcal T$.
The proof of the $S$-property is then finished by an argument that we do not give here,
since it is completely analogous to the proof of \cite[Theorem 2.2]{DHK}.

\section{Preliminaries}\label{sect_prelim}

\subsection{$S$-curve in external field}

The zeros of the orthogonal polynomial accumulate on a contour (or union of contours) 
that is an $S$-curve in the external field $\Re V$.
To explain what this means we need certain notions from logarithmic potential theory. 
The following concepts are well known, see \cite{ST}.

 \begin{Def} 
 The logarithmic energy in external field $\Re V$ of a measure $\nu$ is defined as
 $$ E_{V}(\nu) =  \iint \log \frac{1}{|s-t|} d\nu(s) d\nu(t) + \int \Re V(s)  d\nu(s). $$
The equilibrium energy  of a contour $\Gamma$ in the external field $\Re V$ is:
\begin{align} \label{EVGamma}
	\mathcal{E}_V(\Gamma) = \inf_{\nu\in\mathcal{M}(\Gamma)} E_V(\nu),
\end{align}
where $\mathcal{M}(\Gamma)$ denotes the space of Borel probability measures on $\Gamma$. 
\end{Def}

If $\Re V(z)/\log(1+ |z|^2)$ tends to $+\infty$ as $|z| \to \infty$ on $\Gamma$,
then there is a unique minimizing measure for \eqref{EVGamma}, which is called the equilibrium
measure of $\Gamma$ in external field $\Re V$. We will denote it by $\mu_{\Gamma}$. The support
of $\mu_{\Gamma}$ is a compact subset of $\Gamma$. The equilibrium measure 
$\mu = \mu_{\Gamma}$ is characterized by the variational conditions which say that
for some $\ell = \ell_{\Gamma}$,
\begin{equation} 
	 2 U^{\mu}(z) + \Re V(z)  \begin{cases} = \ell, \qquad z \in \supp(\mu), \\
	 	\geq \ell, \qquad z \in \Gamma,
	 	\end{cases} \end{equation}
where 
\[ U^{\mu}(z) = \int \log \frac{1}{|z-s|} d\mu(s), \qquad z \in \mathbb C, \]
denotes the logarithmic potential of $\mu$.

\begin{Def}
The contour $\Gamma$ has the $S$-property in the external field $\Re V$ if
the equilibrium measure $\mu = \mu_{\Gamma}$ is supported on a finite number
of analytic arcs, and if on the interior of each analytic arc we have
\[ \frac{\partial}{\partial n_+} \left( 2 U^{\mu} + \Re V \right)
	= \frac{\partial}{\partial n_-} \left( 2 U^{\mu} + \Re V \right), \]
where $\frac{\partial}{\partial n_{\pm}}$ denote the derivatives in the normal directions on $\Gamma$.
A contour with the $S$-property is also called an $S$-curve.
\end{Def}

The following result is due to Gonchar and Rakhmanov \cite{GR}.
\begin{theorem} \label{theoremGR}
Suppose that a contour $\Gamma$ has the $S$-property in the external field $\Re V$.
% the polynomial $P_n$ exists for $n$ large enough, and 
Let $z_{1n}, z_{2n}, \ldots, z_{nn}$ denote
the zeros of the orthogonal polynomial $P_n$ that is characterized by \eqref{orthogonality}. Then
\[ \lim_{n \to \infty} \frac{1}{n} \sum_{j=1}^n \delta_{z_{jn}} = \mu_{\Gamma} \]
in the sense of weak convergence of probability measures. Here $\mu_{\Gamma}$ is the equilibrium
measure of $\Gamma$ in the external field $\Re V$. 
\end{theorem}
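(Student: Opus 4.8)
The final statement is the Gonchar--Rakhmanov theorem, and I would prove it by the potential-theoretic method of \cite{GR}, worked out for the present class of weights in \cite{KuSil}: one reduces to identifying the weak-$*$ limits of the zero counting measures and pins every such limit to $\mu_\Gamma$ by a pair of energy estimates, in which the $S$-property does the decisive work. Write $\nu_n=\frac1n\sum_{j=1}^n\delta_{z_{jn}}$; since $P_n$ is monic, $\frac1n\log|P_n(z)|=\int\log|z-s|\,d\nu_n(s)=-U^{\nu_n}(z)$. Standard a priori bounds based on the confining growth $\Re V(z)/\log(1+|z|^2)\to+\infty$ keep the zeros in a fixed compact set, so $\{\nu_n\}$ is tight and every subsequence has a weak-$*$ convergent sub-subsequence with a probability measure $\nu$ as limit. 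It then suffices to show that every such $\nu$ equals $\mu_\Gamma$, after which convergence of the whole sequence follows. I would freely use the dictionary between weak-$*$ convergence of $\nu_n$ and convergence of the potentials $U^{\nu_n}$ (principle of descent and the lower-envelope theorem) to pass between statements about zeros and statements about $U^\nu$.

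It is enough to establish, for every weak-$*$ limit $\nu$, the two facts (i) $\supp\nu\subseteq\Gamma$ and (ii) $E_V(\nu)\le\mathcal E_V(\Gamma)$. Indeed (i) places $\nu$ in $\mathcal M(\Gamma)$, whence $E_V(\nu)\ge\mathcal E_V(\Gamma)$ by the definition \eqref{EVGamma}; combined with (ii) this forces $E_V(\nu)=\mathcal E_V(\Gamma)$, and since the equilibrium measure is the \emph{unique} minimizer we conclude $\nu=\mu_\Gamma$. Both (i) and (ii) must be squeezed out of the orthogonality \eqref{orthogonality}, and the natural vehicle is the second-kind function $R_n(z)=\int_\Gamma\frac{P_n(s)\,e^{-nV(s)}}{z-s}\,ds$, which by \eqref{orthogonality} vanishes to order $n+1$ at infinity, $R_n(z)=h_n z^{-n-1}(1+O(z^{-1}))$ with $h_n=\int_\Gamma P_n(s)^2 e^{-nV(s)}\,ds$. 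The zeros of $P_n$ and the exponential decay of $R_n$ are controlled by one and the same logarithmic potential, and the assertion of the theorem is that this common potential is $U^{\mu_\Gamma}$.

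The crux is to estimate $h_n$ with no loss of exponential order. Introduce the complex potential $g(z)=\int\log(z-s)\,d\mu_\Gamma(s)$ and the phase $\phi(z)=2g(z)-V(z)+\ell_\Gamma$, so that $\Re\phi=\ell_\Gamma-(2U^{\mu_\Gamma}+\Re V)$ vanishes on $\supp(\mu_\Gamma)$ and is $\le 0$ on $\Gamma$. The $S$-property --- equality of the two normal derivatives of $2U^{\mu_\Gamma}+\Re V$ across each arc --- says precisely that $\supp(\mu_\Gamma)$ consists of critical trajectories of the quadratic differential $-Q(z)\,dz^2$ of the introduction, with $(\tfrac12\phi')^2=Q$, along which $\Re\phi\equiv 0$ while $\Re\phi<0$ transversally on both sides. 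Since $|P_n^2 e^{-nV}|$ has leading exponential order $e^{n\Re\phi-n\ell_\Gamma}$, deforming $\Gamma$ onto this $S$-curve makes the integrand of $h_n$ attain its maximal modulus $e^{-n\ell_\Gamma}$ exactly on the curve and decay to both sides: the $S$-curve is a genuine steepest-descent contour. A local analysis at the simple zeros of $Q$ (Airy-type contributions) then shows there is no cancellation and gives $\frac1n\log|h_n|\to-\ell_\Gamma$; feeding this back through $R_n$ and $P_n$, together with lower-semicontinuity of the energy, yields (i) and (ii).

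The hard part is exactly this deformation-and-saddle step. Because the orthogonality \eqref{orthogonality} is non-Hermitian there is no positivity to fall back on: a priori the oscillation of $e^{-nV}$ along the contour could produce cancellations that depress the exponential order of $h_n$ (or of the associated Hankel determinants) and invalidate the energy estimate, and it is only the $S$-property that rules this out by making $\supp(\mu_\Gamma)$ a steepest-descent trajectory on which the dominant contribution is non-cancelling. Turning this into a proof --- justifying the contour deformation, controlling the integral uniformly near the endpoints, and propagating the estimate to the support statement (i) --- is where essentially all the technical work lies; it is the portion of \cite{GR} and \cite{KuSil} that I would reproduce or cite in full.
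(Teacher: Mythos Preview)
The paper does not give its own proof of this theorem: it is stated as a known result of Gonchar and Rakhmanov and simply attributed to \cite{GR}. So there is no in-paper argument to compare your sketch against; the paper treats it as a black box and uses only its conclusion.

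Regarding the sketch itself, the overall architecture---pass to subsequential weak-$*$ limits of the zero counting measures, and pin each limit to $\mu_\Gamma$ via energy estimates in which the $S$-property furnishes the steepest-descent geometry---is indeed the Gonchar--Rakhmanov strategy. Two places where your outline is loose, however, are worth flagging. First, the sentence ``standard a priori bounds based on the confining growth $\Re V(z)/\log(1+|z|^2)\to+\infty$ keep the zeros in a fixed compact set'' and the support claim (i) $\supp\nu\subseteq\Gamma$ are not at all automatic in the non-Hermitian setting: the zeros of $P_n$ live in $\mathbb C$, not on $\Gamma$, and the growth of $\Re V$ \emph{along} $\Gamma$ does not by itself bound their location. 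In the actual GR argument the tightness and the support statement are \emph{outputs} of the $n$th-root asymptotics of $|P_n|$ (obtained from the analysis of $R_n$ and $h_n$), not a priori inputs; your write-up presents them in the wrong logical order. Second, invoking ``Airy-type contributions'' at the endpoints imports Riemann--Hilbert/Deift--Zhou machinery that the GR proof does not need: GR operates purely at the level of logarithmic ($n$th-root) asymptotics, so $\tfrac1n\log|h_n|\to-\ell_\Gamma$ follows from soft upper and lower bounds once the $S$-curve deformation is in place, with no local parametrix analysis required.
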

In view of this theorem it is a natural question to ask whether the class $\mathcal T_{j,k}$ contains 
a contour with the $S$-property in the external field $\Re V$.  
The following result was obtained in \cite{KuSil} by adapting the results
of \cite{MFR} and \cite{Rak} 
to the situation of a polynomial external field. See also \cite{MRS}.

\begin{theorem} \label{theoremRMF}
For every polynomial $V$ of degree $d \geq 2$ and every choice of sectors $S_j$
and $S_k$ with $1 \leq j \neq k \leq d$, the following holds.
\begin{enumerate}
\item[\rm (a)] There is a contour $\Gamma \in \mathcal T_{j,k}$ such that
\begin{equation} \label{EVoptimal} 
	\mathcal E_V(\Gamma) = \sup_{\Gamma' \in \mathcal T_{jk}} \mathcal E_V(\Gamma'). 
	\end{equation}
\item[\rm (b)] The contour $\Gamma$ has the $S$-property
in the external field $\Re V$ (and so the zeros of $P_n$ tend to the support
of $\mu_{\Gamma}$ by Theorem \ref{theoremGR}).
\item[\rm (c)] The function
\begin{equation} \label{defQ}
	Q(z) = \left(\frac{1}{2} V'(z) - \int \frac{d\mu_{\Gamma}(s)}{z-s} \right)^2, 
	\qquad z \in \mathbb C \setminus \supp\mu_{\Gamma} 
	\end{equation}
is  a polynomial of degree $\deg Q = 2d-2$.
\item[\rm (d)] The support of $\mu_{\Gamma}$ consists of critical trajectories of the quadratic differential
$- Q(z) dz^2$, and
\[ d\mu_{\Gamma}(s) = \frac{1}{\pi i} Q^{1/2}(s) ds \]
holds on each trajectory.
\end{enumerate}
\end{theorem}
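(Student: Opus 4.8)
The plan is to follow the max--min approach of Rakhmanov \cite{Rak,MFR}, adapted to the polynomial external field $\Re V$ as in \cite{KuSil,MRS}, and to prove the four assertions in the order (a)--(d). Throughout I write $g(z)=\int\log(z-s)\,d\mu_\Gamma(s)$, so that $2U^{\mu_\Gamma}+\Re V=\Re(V-2g)$, and $\xi(z)=\tfrac12 V'(z)-g'(z)$, so that $Q=\xi^2$ in the notation of \eqref{defQ}. For part (a) I would first note that $\mathcal E_V$ is bounded above on $\mathcal T_{j,k}$: since every admissible contour escapes to infinity only through sectors where $\Re V\to+\infty$, the growth condition $\Re V(z)/\log(1+|z|^2)\to+\infty$ forces the support of each $\mu_\Gamma$ into a compact set and bounds the energy. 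Existence of a maximizer is then obtained by the compactness machinery of \cite{Rak,MFR,MRS}: one takes a maximizing sequence of contours, uses that the supports and the constants $\ell_\Gamma$ stay bounded along it, and extracts a limiting admissible contour $\Gamma\in\mathcal T_{j,k}$ realizing the supremum in \eqref{EVoptimal}; the principle of descent for the logarithmic energy is the key analytic ingredient controlling the limit.

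For part (b) I would establish the $S$-property of the maximizer by a Hadamard-type first-variation argument. Given a smooth vector field $h$ supported away from the sectors at infinity, let $\Gamma_t$ be the image of $\Gamma$ under the flow it generates. Differentiating $t\mapsto\mathcal E_V(\Gamma_t)$ at $t=0$ and inserting the Euler--Lagrange conditions for $\mu=\mu_\Gamma$ expresses the derivative as an integral over $\supp\mu$ of the normal component of $h$ against the jump
\[ \frac{\partial}{\partial n_+}\bigl(2U^\mu+\Re V\bigr)-\frac{\partial}{\partial n_-}\bigl(2U^\mu+\Re V\bigr). \]
Because $\Gamma$ maximizes $\mathcal E_V$ within its class, this derivative vanishes for every admissible $h$, which forces the jump to be zero on the interior of each analytic arc --- precisely the $S$-property. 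Justifying this variation formula, together with the a priori regularity that $\supp\mu$ consists of finitely many analytic arcs, is the technical heart of the proof and the step I expect to be the main obstacle.

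For parts (c) and (d) I would pass to boundary values of $\xi$ on $\supp\mu$. The Euler--Lagrange equality $\Re(V-2g)=\ell$ makes $\Re(V-2g)$ constant along each arc, so its tangential derivative vanishes from both sides, while the $S$-property of part (b) balances its normal derivatives; combining the two through the Cauchy--Riemann equations gives $\xi_+=-\xi_-$ on the interior of each arc. Hence the two boundary values of $Q=\xi^2$ coincide and $Q$ continues analytically across $\supp\mu$; since $\xi$ is already analytic off $\supp\mu$, the function $Q$ is entire. The expansion $g'(z)=\tfrac1z+O(z^{-2})$ at infinity gives $Q(z)=\tfrac14 V'(z)^2+O(z^{d-2})$, so $Q$ is an entire function of polynomial growth and therefore a polynomial of degree $2d-2$, proving (c).

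Finally, for (d) the Plemelj formula yields $\xi_+-\xi_-=2\pi i\,\tfrac{d\mu_\Gamma}{ds}$ on each arc, so $\xi_+=-\xi_-$ gives $d\mu_\Gamma=\tfrac1{\pi i}Q^{1/2}\,ds$. Positivity of $\mu_\Gamma$ then forces $Q^{1/2}\,dz\in i\er$ along the support, which is exactly the condition $-Q(z)\,dz^2>0$ that the arc be a trajectory of the quadratic differential $-Q(z)\,dz^2$; since the endpoints of the arcs are zeros of $Q$, these are critical trajectories, completing the proof of (d).
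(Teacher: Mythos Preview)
The paper does not actually prove Theorem~\ref{theoremRMF}: it is stated as a result ``obtained in \cite{KuSil} by adapting the results of \cite{MFR} and \cite{Rak} to the situation of a polynomial external field,'' with a further reference to \cite{MRS}. So there is no proof in the paper to compare your proposal against; the authors simply import the theorem from the literature.

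That said, your sketch is a faithful high-level outline of exactly the argument carried out in those references, so in spirit you are doing what the paper defers to \cite{KuSil}. The decomposition (a) compactness for the max--min, (b) first variation to get the $S$-property, (c) analytic continuation of $\xi^2$ across $\supp\mu$ via $\xi_+=-\xi_-$, (d) Plemelj plus positivity to identify the support with critical trajectories, is the standard route. Parts (c) and (d) are essentially complete as you wrote them. The honest caveat you insert at the end of (b) is exactly right: the two genuinely hard points are the existence of the maximizing contour in (a) (one needs to control where the contours go near infinity and show the limit stays in $\mathcal T_{j,k}$; this is where the polynomial growth of $V$ is used in an essential way) and the a priori regularity in (b) that $\supp\mu_\Gamma$ is a finite union of analytic arcs, without which the normal-derivative formulation of the $S$-property and the variation formula do not even make sense. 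In the cited works these are handled, respectively, by cutting the admissible contours to a large disk and using upper semicontinuity of the equilibrium energy, and by first deriving the algebraic equation $Q=\xi^2$ for a polynomial $Q$ from a weaker (distributional) variation and then reading off the analytic-arc structure from the quadratic differential. Your sketch reverses this logical order slightly (you assume regularity to run the variation, then deduce $Q$), which is fine as an outline but would need reorganizing in a full proof.
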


Recall that a curve $\gamma$ containing the point $z_0$ is a trajectory (or horizontal trajectory) for $-Q(z) dz^2$ if 
\begin{equation} \label{horizontal} 
	\int_{z_0}^z Q^{1/2}(s) ds \in i \mathbb R, \qquad \text{for }  z \in \gamma, 
	\end{equation}
and $\gamma$ is a critical trajectory if it passes through a zero of $Q$, see \cite{Str}.
For us a trajectory is always a maximal trajectory. Since $Q$ is a polynomial, we have
that a trajectory is either an analytic arc connecting two zeros of $Q$,
or an unbounded analytic curve connecting a zero of $Q$ with infinity, or a two-sided
unbounded curve. A  vertical trajectory of $-Q(z) dz^2$ going through $z_0$
is  a curve $\gamma$ such that   
\begin{equation} \label{vertical} 
	\int_{z_0}^z Q^{1/2}(s) ds \in \mathbb R, \qquad \text{for }  z \in \gamma. 
	\end{equation}
That is, vertical trajectories are the usual (horizontal) trajectories of
$Q(z) dz^2$.

The combination of \eqref{EVGamma} and \eqref{EVoptimal} characterizes the $S$-curve in
terms of a max-min problem. For a given contour $\Gamma$
we minimize the energy in the external field, and then we maximize over the family $\mathcal T_{j,k}$ 
to obtain the curve with the $S$-property.

For a given $V$ and family $\mathcal T_{j,k}$ our task is to identify
the polynomial $Q$ from \eqref{defQ}. Since $\mu_{\Gamma}$ is a probability measure,
we find from \eqref{defQ} that
\begin{align} \nonumber
	Q(z) & = \left( \frac{1}{2} V'(z) - \frac{1}{z} + \mathcal{O}(z^{-2})\right)^2 \\
	& = \frac{1}{4} V'(z)^2-\frac{V'(z)}{z}+\mathcal{O}\left(z^{d-3}\right)
	\qquad \text{ as } z \to \infty,
	\label{Qasympt}
\end{align}
which is not enough to specify $Q$ except in the case $d=2$. For $d \geq 3$
there are $d-2$ unspecified constants in the polynomial \eqref{Qasympt}. 

\subsection{Trajectories of a quadratic differential}

In this subsection we assume that $Q$ is a polynomial of degree $2d-2$ and
we collect some properties of the trajectories of the quadratic
differential $-Q(z) dz^2$. 
The local structure of critical trajectories of $-Q(z) dz^2$ is well-understood.

\begin{Lem} \label{m_traject} 
Suppose $z_0$ is a zero of the polynomial $Q$ of order $m$. Then there are $m+2$ 
critical trajectories emanating from $z_0$
at equal angles $\psi_0, \ldots, \psi_{m+1}$. The angles  are solutions of the equation   
\begin{equation}\label{dir_crit_alg}
	(m+2) \psi = \pi-\arg Q^{(m)}(z_0)   \mod  2 \pi, \qquad
	j=0, \ldots, m+1.
\end{equation}
\end{Lem}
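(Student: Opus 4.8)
The plan is to reduce the global statement about $m+2$ critical trajectories to a purely local computation near the zero $z_0$, using the standard normal-form theory for quadratic differentials. First I would introduce the local behaviour of $Q$ near the zero: since $z_0$ is a zero of order $m$, we may write $Q(z) = c(z-z_0)^m(1 + O(z-z_0))$ with $c = Q^{(m)}(z_0)/m! \neq 0$. Consequently a branch of $Q^{1/2}(z)$ behaves like $c^{1/2}(z-z_0)^{m/2}$ to leading order, and the ``distinguished'' local coordinate $w(z) = \int_{z_0}^z Q^{1/2}(s)\,ds$ satisfies $w(z) = \frac{2}{m+2} c^{1/2}(z-z_0)^{(m+2)/2}(1 + O(z-z_0))$ as $z \to z_0$. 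This is the content of the local normal form for a quadratic differential at a zero of order $m$ (see Strebel \cite{Str}), and I would cite it rather than reprove it.

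Next I would use the defining condition of a horizontal trajectory, namely \eqref{horizontal}, which says precisely that $w(z) \in i\mathbb{R}$ along the trajectory through $z_0$. Writing $z - z_0 = \rho e^{i\psi}$ with $\rho \to 0^+$, the leading term of $w$ is $\frac{2}{m+2} c^{1/2} \rho^{(m+2)/2} e^{i(m+2)\psi/2}$, so the condition $w(z) \in i\mathbb{R}$ forces, in the limit $\rho \to 0$,
\[
	\arg\left( c^{1/2} e^{i(m+2)\psi/2} \right) = \frac{\pi}{2} \mod \pi,
\]
i.e. $\tfrac{1}{2}\arg c + \tfrac{(m+2)\psi}{2} = \tfrac{\pi}{2} \mod \pi$, which rearranges to $(m+2)\psi = \pi - \arg c \mod 2\pi$. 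Since $\arg c = \arg Q^{(m)}(z_0)$ (the positive constant $m!$ does not affect the argument), this is exactly equation \eqref{dir_crit_alg}. The $m+2$ solutions $\psi_0, \ldots, \psi_{m+1}$ in $[0, 2\pi)$ differ by consecutive multiples of $\tfrac{2\pi}{m+2}$, hence are equally spaced, giving the claimed count and angular distribution.

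The one point that needs a little care — and what I expect to be the main obstacle, though it is more a matter of rigour than of difficulty — is justifying that each admissible direction $\psi_j$ actually carries a genuine critical trajectory (an analytic arc emanating from $z_0$), rather than merely being a formal solution of the leading-order equation. Here I would invoke the normal-form theorem: in the coordinate $w$, the quadratic differential becomes $-dw^2$, whose horizontal trajectories are the horizontal lines $\operatorname{Im} w = \text{const}$; the level set through $w = 0$ consists of the single line $\operatorname{Im} w = 0$, which pulls back under the $(m+2)$-to-$1$ local map $z \mapsto w$ to exactly $m+2$ arcs through $z_0$ emanating in the directions computed above. This both produces the trajectories and shows there are no others. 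I would also remark that the implicit function theorem (applied to $w = w(z)$ away from $z_0$, where $w'(z) = Q^{1/2}(z) \neq 0$) guarantees that each arc is analytic on its interior, and that the higher-order corrections in the expansion of $w$ only perturb the arcs without changing their tangent directions at $z_0$. With these ingredients assembled, the lemma follows.
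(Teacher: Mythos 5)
Your argument is correct and fills in what the paper leaves entirely to a citation: the paper's proof of Lemma \ref{m_traject} consists of the single line ``This result is proved in \cite{Str}.'' You instead derive the angle equation \eqref{dir_crit_alg} from the leading-order expansion of $w(z)=\int_{z_0}^z Q^{1/2}(s)\,ds$, which is exactly the computation hidden behind that citation, and you correctly reduce the existence (and exhaustiveness) of the $m+2$ arcs to Strebel's local normal form. The computation checks out: with $c=Q^{(m)}(z_0)/m!$ the condition $w(z)\in i\mathbb R$ from \eqref{horizontal} forces $(m+2)\psi=\pi-\arg c \bmod 2\pi$, and $\arg c=\arg Q^{(m)}(z_0)$, giving $m+2$ equally spaced solutions. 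Two cosmetic points, neither a gap. First, in your final paragraph you describe the trajectories in the $w$-coordinate as the horizontal lines $\Im w=\mathrm{const}$; with the paper's convention \eqref{horizontal} they are the lines $\Re w=\mathrm{const}$ (you used the correct condition $w\in i\mathbb R$ in the main computation), but since in either convention the level set through the origin is a single straight line through $0$ in the $w$-plane, the count of preimage arcs is unaffected. Second, the map $z\mapsto w\sim C(z-z_0)^{(m+2)/2}$ is $\tfrac{m+2}{2}$-to-one where single-valued (and is multivalued for odd $m$), not $(m+2)$-to-one; the clean statement is that the two rays making up the line through $0$ pull back to $m+2$ arc directions, or one argues with the single-valued $w^2\sim C^2(z-z_0)^{m+2}$.
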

\begin{proof}
This result is proved in \cite{Str}. 
\end{proof}

So in particular, if $z_0$ is a simple zero then three critical trajectories emanate from $z_0$
at equal angles given by 
\begin{equation}\label{dir_crit1}
	3\psi = \pi-\arg Q'(z_0)\quad \mod 2\pi.
\end{equation}
If $z_0$ is a double zero then four critical trajectories are emerging from $z_0$ at angles satisfying: 
\begin{equation}\label{dir_crit2}
	4\psi= \pi-\arg Q''(z_0) \quad  \mod 2\pi.
\end{equation}

In the situation of Theorem \ref{theoremRMF} we note the following about the zeros of $Q$.
\begin{Lem} If $Q$ is as in Theorem \ref{theoremRMF} then zeros of odd multiplicity of 
$Q$ are contained in $\supp \mu_{\Gamma}$.
\end{Lem}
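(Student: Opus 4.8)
The statement to prove is: if $Q$ is as in Theorem \ref{theoremRMF}, then every zero of odd multiplicity of $Q$ lies in $\supp\mu_{\Gamma}$. The plan is to exploit the defining formula \eqref{defQ}, which expresses $Q$ as the square of the analytic function
\[
G(z) = \frac{1}{2} V'(z) - \int \frac{d\mu_{\Gamma}(s)}{z-s}
\]
on $\mathbb C \setminus \supp\mu_{\Gamma}$. The key point is that $G$ extends holomorphically across any point of $\mathbb C \setminus \supp\mu_{\Gamma}$, so $Q = G^2$ has only zeros of \emph{even} multiplicity outside $\supp\mu_{\Gamma}$. Consequently a zero of odd multiplicity cannot lie in the complement, hence it must lie in $\supp\mu_{\Gamma}$.

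To carry this out, first I would fix a point $z_0 \notin \supp\mu_{\Gamma}$. Since $\supp\mu_{\Gamma}$ is compact, there is a neighbourhood $U$ of $z_0$ disjoint from $\supp\mu_{\Gamma}$, and on $U$ the Cauchy transform $z \mapsto \int \frac{d\mu_{\Gamma}(s)}{z-s}$ is holomorphic; together with the polynomial $\tfrac12 V'$ this makes $G$ holomorphic on $U$. Then $Q|_U = (G|_U)^2$, and if $z_0$ is a zero of $G$ of order $k \ge 0$, it is a zero of $Q$ of order $2k$, which is even. Contrapositively, if $z_0$ is a zero of $Q$ of odd order, then $z_0 \notin \mathbb C \setminus \supp\mu_{\Gamma}$, i.e.\ $z_0 \in \supp\mu_{\Gamma}$. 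Since by Theorem \ref{theoremRMF}(c) $Q$ is a polynomial — in particular its zero set and the multiplicities are well defined globally and agree locally with the above — this finishes the argument.

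There is essentially no serious obstacle here; the only point requiring a word of care is the global-versus-local consistency of multiplicities. The order of vanishing of the polynomial $Q$ at $z_0$ is intrinsic and does not depend on whether we compute it from the polynomial expression or from the local factorization $Q = G^2$ valid near $z_0$; both give the same integer because they are the same holomorphic function near $z_0$. One might also remark that this lemma is exactly what makes Lemma \ref{m_traject} useful in the sequel: an odd-multiplicity zero (in practice a simple zero) of $Q$ is an endpoint of the support, from which an odd number plus two critical trajectories emanate, one of which runs along $\supp\mu_{\Gamma}$ itself. I would keep the written proof to two or three sentences, essentially: $Q = G^2$ with $G$ holomorphic off $\supp\mu_{\Gamma}$, so all zeros off the support have even multiplicity, hence odd-multiplicity zeros are in the support.
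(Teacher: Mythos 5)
Your proof is correct and is essentially the paper's own argument: the paper observes that \eqref{defQ} provides an analytic single-valued square root of $Q$ on $\mathbb C \setminus \supp\mu_{\Gamma}$, which is impossible near an odd-order zero, and your local factorization $Q=G^2$ with $G$ holomorphic off the support is the same observation phrased through multiplicities. No gaps.
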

\begin{proof}
From \eqref{defQ} we know that $Q$ has an analytic and single-valued square root in $\mathbb C \setminus \supp \mu_{\Gamma}$.
Then the lemma follows, since we clearly cannot have a single-valued square root in a neighborhood of a zero of $Q$ of
odd multiplicity.
\end{proof}

Two trajectories cannot intersect, except at a zero of $Q$. Also a trajectory cannot
be closed, since we are dealing with a polynomial $Q$, as given in the following lemma.
 
\begin{Lem} \label{No_loop}
Let $Q$ be a polynomial of degree $\geq 2$. Then there cannot be a closed contour that is
a trajectory, or a finite union of trajectories, of $-Q(z) dz^2$.
\end{Lem}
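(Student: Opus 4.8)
The plan is to argue by contradiction using the fact that along a trajectory the integral $\int Q^{1/2}(s)\,ds$ is purely imaginary, combined with an argument-principle computation. Suppose, toward a contradiction, that there is a closed curve $\gamma$ which is a trajectory, or a finite union of trajectories joined at zeros of $Q$, of $-Q(z)\,dz^2$. Since trajectories only meet at zeros of $Q$, such a $\gamma$ bounds a region $\Omega$ whose boundary consists of trajectory arcs meeting at zeros of $Q$. The first step is to fix a branch of $Q^{1/2}$ in a neighborhood of $\gamma$; this is possible away from zeros of odd order, and near any zero on $\gamma$ we pass to a local coordinate in which the differential is standard, so the contour integral around $\gamma$ still makes sense as a limit.

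Next I would exploit the defining relation \eqref{horizontal}: on each trajectory arc of $\gamma$ the function $F(z)=\int_{z_0}^z Q^{1/2}(s)\,ds$ takes values in $i\mathbb R$ (for a suitable base point and branch on that arc). Going around the closed contour $\gamma$ the increments of $\Im F$ add up while $\Re F$ stays constant piecewise; one checks that the total change of $F$ around $\gamma$ is $\oint_\gamma Q^{1/2}(s)\,ds$, which therefore lies in $i\mathbb R$ (it may a priori be nonzero because the branch can switch sign at zeros of odd multiplicity, but in all cases the real part vanishes). Now deform $\gamma$ and apply the residue theorem, or rather a direct estimate: since $Q$ is a polynomial, $Q^{1/2}(z)$ behaves like $c z^{d-1}$ at infinity with $d\ge 2$, so $Q^{1/2}$ has no residue-type obstruction, and in fact one can evaluate $\oint_\gamma Q^{1/2}\,ds$ by shrinking or expanding $\gamma$. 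The cleanest route: inside $\Omega$ consider the harmonic function $\Re F$ (well-defined at least locally, and single-valued up to the sign ambiguity which can be squared away by working with $(\Re F)^2$ or by noting $\Omega$ avoids odd zeros except on its boundary). On $\partial\Omega=\gamma$ it is constant; by the maximum principle for harmonic functions $\Re F$ is constant on $\Omega$, forcing $F$ to be constant there, hence $Q^{1/2}\equiv 0$ on $\Omega$, hence $Q\equiv 0$, contradicting $\deg Q\ge 2$.

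The main obstacle is handling the multivaluedness of $Q^{1/2}$ correctly, since zeros of odd multiplicity of $Q$ are exactly the branch points, and a closed $\gamma$ might enclose an odd number of them, so $Q^{1/2}$ does not return to its initial value after a loop. I expect the right fix is to argue locally: either $\gamma$ encloses no zero of odd multiplicity, in which case $Q^{1/2}$ is single-valued in $\Omega$ and the maximum-principle argument above applies directly; or $\gamma$ encloses at least one, in which case one instead works with the quadratic differential itself and the natural parameter, or passes to a two-sheeted cover, where the pulled-back differential is the square of a single-valued one and $\gamma$ lifts to a (possibly disconnected) closed curve on which the same potential-theoretic argument runs. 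In either case the conclusion is that the harmonic conjugate part of $\int Q^{1/2}$ would be constant on a region of positive area, which is impossible for a nonzero polynomial $Q$.

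Alternatively, and perhaps more in the spirit of Strebel, one can invoke the standard structure theory of quadratic differentials: a closed trajectory of a holomorphic quadratic differential on the plane bounds a ring domain (a maximal annulus swept out by closed trajectories), whose modulus is finite; pushing this family of closed trajectories outward they must degenerate, but for a polynomial differential the trajectory structure near infinity is that of $z^{d-1}\,dz^2$ with $d-1\ge 1$, which has no closed trajectories and no ring domains near infinity — a contradiction. I would present the self-contained potential-theoretic proof as the main argument and mention the structure-theory viewpoint as a remark. The only calculation that needs care is the behavior at infinity and the branch bookkeeping at odd zeros on $\gamma$; everything else is the maximum principle.
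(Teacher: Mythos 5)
The paper does not prove this lemma at all; it simply cites Lemmas 8.3 and 8.4 of Pommerenke's book, so any self-contained argument is necessarily ``different from the paper.'' Your main argument --- fix $F(z)=\int_{z_0}^z Q^{1/2}(s)\,ds$, observe that $\Re F$ is locally constant on each trajectory arc and hence constant on the whole closed curve $\gamma$ by continuity at the vertices, and then apply the maximum principle to the harmonic function $\Re F$ on the bounded Jordan domain $\Omega$ to force $Q^{1/2}=F'\equiv 0$ --- is correct and complete precisely when $Q^{1/2}$ admits a single-valued holomorphic branch on $\Omega$, i.e.\ when $\Omega$ contains no zero of odd multiplicity (even-order zeros are harmless since $Q^{1/2}$ extends holomorphically across them). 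That part I would accept as written.

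The gap is in the branched case, which you correctly identify as the main obstacle but do not actually close. If $\Omega$ contains $k\ge 1$ odd-order zeros, the double cover $\hat\Omega\to\Omega$ branched over them has Euler characteristic $2-k$ by Riemann--Hurwitz; for $k\ge 2$ it is not simply connected, so the lifted holomorphic form $\pi^*\bigl(Q^{1/2}\,dz\bigr)$, although single-valued, generally has nonzero periods, its primitive $F$ is multivalued, and the maximum principle does not apply to the multivalued ``function'' $\Re F$. There is no evident reason why the real parts of these periods should vanish, so ``the same potential-theoretic argument runs'' is an assertion, not a proof. Your alternative via ring domains has a similar problem: the curve in the lemma is allowed to be a union of \emph{critical} trajectories through zeros, and such a curve is not contained in a ring domain swept out by closed trajectories, so the Strebel structure theory you invoke does not directly apply to it. A uniform and fully elementary repair is available inside the paper itself: apply Teichm\"uller's lemma (Theorem \ref{Teichmuller}) to the bounded Jordan domain $\Omega$ cut out by (a Jordan sub-loop of) $\gamma$. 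Every vertex $z_j$ is a zero of order $n_j\ge 1$ whose interior angle is a positive integer multiple of $\tfrac{2\pi}{n_j+2}$, so each term $1-\varphi_j\tfrac{n_j+2}{2\pi}$ on the left of \eqref{Teichmuller_sum} is $\le 0$ (and the sum is $0$ if $\gamma$ is a closed trajectory with no vertices), while the right-hand side is $2+\sum_i n_i\ge 2$ because a polynomial has no finite poles. This contradiction covers all cases at once and avoids the branch bookkeeping entirely; I would replace the double-cover sketch by this argument.
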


\begin{proof}
See e.g. Lemmas 8.3 and 8.4 in \cite{Pom}. 
\end{proof}

We need to control the behavior of trajectories at infinity. 
Since $Q$ is a polynomial of degree $2d-2$, the point at infinity
is a pole of the quadratric differential $-Q(z) dz^2$ of order $2d + 2 \geq 4$.

\begin{Lem} \label{lem:angles}
Let $Q(z) = - c^2 z^{2d-2} + \mathcal{O}(z^{2d-3})$ be a polynomial of degree $2d-2$ with $d \geq 2$. 
Then any unbounded trajectory of $-Q(z) dz^2$ that ends at infinity does so at one of $2d$ possible angles
$\theta_j$ where
\begin{equation} \label{angles} 
\theta_j = - \frac{1}{d} \arg c + \frac{j \pi}{d}, \qquad j = 0, \ldots, 2d-1.
\end{equation}
\end{Lem}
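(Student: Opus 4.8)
The plan is to analyze the behavior of the quadratic differential $-Q(z)\,dz^2$ near the pole at infinity by introducing a suitable local coordinate. First I would change variables via $w = 1/z$, so that $z = \infty$ corresponds to $w = 0$. Under this substitution $dz = -w^{-2}\,dw$, hence
\begin{equation*}
-Q(z)\,dz^2 = -Q(1/w)\,w^{-4}\,dw^2.
\end{equation*}
Since $Q(z) = -c^2 z^{2d-2} + \mathcal{O}(z^{2d-3})$, we get $Q(1/w) = -c^2 w^{-(2d-2)}(1 + \mathcal{O}(w))$, so that
\begin{equation*}
-Q(1/w)\,w^{-4}\,dw^2 = c^2\,w^{-(2d+2)}\bigl(1 + \mathcal{O}(w)\bigr)\,dw^2,
\end{equation*}
which confirms that infinity is a pole of order $2d+2 \geq 4$ of the quadratic differential. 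For such higher-order poles (order $\geq 3$), the standard theory (Strebel \cite{Str}, §7) tells us that trajectories approaching the pole do so asymptotic to a finite set of directions, the so-called critical directions, and I would invoke this structure rather than reprove it from scratch.

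The next step is to extract these critical directions explicitly in the $z$-variable. A trajectory of $-Q(z)\,dz^2$ is characterized by $\int Q^{1/2}(s)\,ds \in i\mathbb{R}$, equivalently by $Q(z)\,dz^2 < 0$ along the curve. Near infinity, $Q^{1/2}(z) \sim \pm i c\, z^{d-1}$, so a primitive is asymptotically $\pm \frac{ic}{d}\, z^d$. A trajectory must keep $\mathrm{Re}\bigl(\frac{ic}{d} z^d\bigr)$ constant (bounded) as $z \to \infty$, which forces the approach direction $\theta = \arg z$ to satisfy $\mathrm{Re}\bigl(i c\, e^{i d\theta}\bigr) = 0$ in the limit, i.e. $ic\, e^{id\theta} \in i\mathbb{R}$, i.e. $c\, e^{id\theta} \in \mathbb{R}$. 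Writing $c = |c|e^{i\arg c}$, this reads $\arg c + d\theta \equiv 0 \pmod{\pi}$, which gives exactly
\begin{equation*}
\theta = -\tfrac{1}{d}\arg c + \tfrac{j\pi}{d}, \qquad j \in \mathbb{Z},
\end{equation*}
and these are distinct modulo $2\pi$ precisely for $j = 0, 1, \ldots, 2d-1$, yielding the $2d$ angles $\theta_j$ in \eqref{angles}.

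Finally I would tie the heuristic computation to the rigorous statement: the fact that every unbounded trajectory ending at infinity actually approaches along one of these $\theta_j$ (and not merely that these are the only candidate directions) is the content of the local structure theorem for quadratic differentials at a pole of order $\geq 3$, applied in the local coordinate $w = 1/z$ above — the $2d + 2 - 2 = 2d$ critical directions at a pole of order $2d+2$ correspond under $\theta \mapsto$ (direction in $w$) to the angles computed. The main obstacle, to the extent there is one, is bookkeeping: making sure the relation between "direction of approach in the $z$-plane" and "direction in the local parameter $w$" is handled correctly (the inversion $w = 1/z$ reverses orientation of the angle), and confirming that the count of critical directions from Strebel's theory ($p - 2$ for a pole of order $p$, here $p = 2d+2$) matches the $2d$ angles we found. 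Since the problem only asserts that trajectories end at infinity at one of these angles — not that all of them are realized — no further case analysis is needed, and the proof reduces to the substitution plus a citation to \cite{Str}.
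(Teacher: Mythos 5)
Your proposal is correct and follows essentially the same route as the paper: both invoke Strebel's local structure theorem at the pole of order $2d+2$ at infinity to guarantee that trajectories approach along one of $2d$ equally spaced critical directions, and both identify those directions by computing the leading asymptotics of a primitive of $Q^{1/2}$ (namely $\pm\tfrac{ic}{d}z^d$) and imposing the trajectory condition, which yields $c\,e^{id\theta}\in\mathbb{R}$ and hence the angles $\theta_j$ of \eqref{angles}. The explicit change of variable $w=1/z$ you carry out is only stated informally in the paper (just before the lemma), but it adds nothing beyond what the paper's citation of \cite[Theorem 7.4]{Str} already covers.
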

\begin{proof}
See  \cite[Theorem 7.4]{Str} for the statement that there are $2d$ possible directions at infinity,
forming equal angles. 

The trajectories are curves along which $\Im \int^z i Q^{1/2}(s) ds$ is constant. 
If $Q(z) = - c^2 z^{2d-2} + \mathcal{O}(z^{2d-3})$
then this means that $\Im( \frac{c}{2d-1} z^{2d} + \mathcal{O}(z^{2d-1}))$ is constant along any unbounded
trajectory, which gives rise to the possible angles \eqref{angles}.
\end{proof}

Useful information about the global behavior of trajectories is
contained in Teichm\"uller's lemma \cite[Theorem 14.1]{Str}.
This lemma involves the notion of a $Q$-polygon, which in this context
is a simple closed curve on the Riemann sphere that is composed of a finite
number of horizontal and vertical trajectories of the quadratic differential $-Q(z)dz^2$.

The order $\ord(z)$ of a point $z$ on the Riemann sphere is defined by 
\[ \ord(z) = \begin{cases} 
	n & \text{ if $z$ is a zero of $Q$ of order $n$}, \\
	-n & \text{ if $z$ is a pole of $Q$ of order $n$}, \\
	0 & \text{ if $z$ is not a zero or a pole}.
	\end{cases} \]
Then Teichm\"uller's lemma (for the special case of a polynomial
quadratic differential) says the following.

\begin{theorem} \label{Teichmuller}
Let $Q$ be a polynomial, and let $\Omega$ be a domain on the Riemann sphere that is bounded by a $Q$-polygon.
Then
\begin{equation} \label{Teichmuller_sum}
	\sum_j \left( 1 - \varphi_j \frac{n_j+2}{2\pi} \right) = 2 + \sum_i n_i,
	\end{equation}
where the sum on the left is over all vertices $z_j$ where $n_j$ is the order of $z_j$
and $\varphi_j \in [0,2\pi]$ is the interior angle of $\Omega$ at $z_j$, and
the sum on the right is over all interior zeros and poles $z_i$ in $\Omega$, 
and $n_i$ is the order of $z_i$. 
\end{theorem}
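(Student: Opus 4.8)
The plan is to prove \eqref{Teichmuller_sum} by realising the quadratic differential as a flat cone metric on the Riemann sphere and applying the Gauss--Bonnet theorem to $\Omega$. Away from the zeros of $Q$ and from $\infty$, the differential $-Q(z)\,dz^2$ induces the conformal metric $ds=|Q(z)|^{1/2}\,|dz|$, which is flat and which becomes the Euclidean metric $|dw|$ in the local natural parameter $w=\int Q^{1/2}(s)\,ds$. In this parameter the horizontal trajectories \eqref{horizontal} and the vertical trajectories \eqref{vertical} straighten into vertical and horizontal line segments respectively, so the bounding $Q$-polygon maps locally onto a rectilinear polygon and every one of its edges is a geodesic of the flat metric, hence of zero geodesic curvature.

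The second ingredient is the local model at a point $z_0$ of order $n$, where $Q(z)\sim c\,(z-z_0)^{n}$ with $c\neq 0$. Integrating gives $w-w_0\sim \tfrac{2}{n+2}\,c^{1/2}(z-z_0)^{(n+2)/2}$, so a sector of Euclidean opening $\varphi$ at $z_0$ is opened by the natural parameter into a sector of opening $\tfrac{n+2}{2}\varphi$. Thus $z_0$ is a cone point of total angle $(n+2)\pi$, carrying concentrated curvature $2\pi-(n+2)\pi=-n\pi$; and if $z_0$ is a boundary vertex with Euclidean interior angle $\varphi_j$, its interior angle in the flat metric is $\tfrac{n_j+2}{2}\varphi_j$.

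With these two facts the argument is short. A $Q$-polygon is a Jordan curve on the sphere, so $\Omega$ is a topological disk and $\chi(\Omega)=1$; the smooth interior is flat and the boundary edges are geodesics. The Gauss--Bonnet theorem for a flat cone surface with piecewise-geodesic boundary then gives
\begin{equation*}
 \sum_i\bigl(2\pi-(n_i+2)\pi\bigr)+\sum_j\Bigl(\pi-\tfrac{n_j+2}{2}\,\varphi_j\Bigr)=2\pi\,\chi(\Omega)=2\pi ,
\end{equation*}
where the first sum runs over the interior zeros and poles $z_i$ and the second over the boundary vertices $z_j$. Using $2\pi-(n_i+2)\pi=-n_i\pi$, dividing by $\pi$, and rearranging reproduces \eqref{Teichmuller_sum} exactly. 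A useful consistency check, and the conceptual origin of the right-hand side, is the argument principle: $\tfrac{1}{2\pi}\oint_{\partial\Omega}d(\arg Q)$ equals the number of zeros minus poles of $Q$ enclosed, which is precisely $\sum_i n_i$.

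The step that needs genuine care is the point at infinity, and I expect it to be the main obstacle. In the chart $u=1/z$ the differential $-Q(z)\,dz^2$ acquires a pole of order $2d+2$, so the order to be fed into \eqref{Teichmuller_sum} at $\infty$ is that of the quadratic differential, $n_\infty=-(2d+2)$, and \emph{not} the order $-(2d-2)$ of the polynomial $Q$ as a function; these coincide at every finite point but differ at $\infty$. I would therefore redo the local-model computation in the $u$-chart, where $w\sim\mathrm{const}\cdot u^{-d}$ winds $d$ times around $0$, to confirm that the angle-multiplication law $\varphi\mapsto\tfrac{n+2}{2}\varphi$ and the concentrated-curvature value $-n\pi$ persist with $n=n_\infty$, so that $\infty$ may be treated on exactly the same footing as a finite cone point, whether it lies interior to $\Omega$ or is one of its vertices. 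One must also check that the multivaluedness of $w$ around the enclosed cone points does not affect the count; this is automatic in the Gauss--Bonnet formulation, which uses only the intrinsic metric data. This is in essence Strebel's proof of \cite[Theorem 14.1]{Str} specialised to a polynomial $Q$.
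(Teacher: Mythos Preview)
The paper does not prove this statement at all; its entire proof is the sentence ``See \cite[Theorem 14.1]{Str}.'' So there is nothing to compare your argument against in the paper itself.

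Your Gauss--Bonnet argument is a correct and standard way to obtain \eqref{Teichmuller_sum}. The computation with the flat metric $|Q|^{1/2}|dz|$, the cone angle $(n+2)\pi$ at a finite zero, and the resulting identity is right, and you have correctly isolated the one point that needs genuine care: the pole at $\infty$. There the metric is complete (the point is at infinite $|Q|^{1/2}|dz|$-distance), so the phrase ``cone point of angle $(n_\infty+2)\pi<0$'' is only formal and the naive Gauss--Bonnet for compact cone surfaces does not apply directly. The fix you sketch is the right one: excise a disk $\{|u|<\epsilon\}$ in the chart $u=1/z$, apply Gauss--Bonnet to the truncated region, and show from $w\sim c\,u^{-d}$ that the geodesic-curvature integral over the small circle converges to the value the formal rule predicts (namely $-n_\infty\pi$ if $\infty$ is interior, or the corresponding vertex contribution if it lies on $\partial\Omega$). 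Once that limit is verified the proof is complete.

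For comparison, Strebel's own argument in \cite{Str} is organised around the total variation of $\arg\bigl(Q(z)\,dz^2\bigr)$ along $\partial\Omega$ together with the argument principle, rather than the intrinsic flat metric; the two approaches are equivalent, both being bookkeeping for the same winding number, but Strebel's formulation handles the pole at $\infty$ without a separate truncation step. Your final paragraph already anticipates this connection.
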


\begin{proof}
See \cite[Theorem 14.1]{Str}.
\end{proof}
Teichm\"uller's lemma is also used in the recent paper \cite{AMMT} 
to determine the structure of critical trajectories for 
the limiting behavior of zeros of Laguerre polynomials with varying complex parameters. 

One easy consequence of \eqref{Teichmuller_sum} is the following.
If a domain $\Omega$ is bounded by a trajectory $\gamma$ that
is unbounded in both directions, 
and if $\Omega$ does not contain any zeros of $Q$ then $\gamma$ extends to
infinity in two consecutive directions at infinity.
The angles are given by $\theta_j$ and $\theta_j + \frac{\pi}{d}$ for some $j$, 
if $\deg Q = 2d-2$.
Indeed, in this situation the right-hand side of \eqref{Teichmuller_sum} is $2$,
and only the point at infinity contributes to the sum in the left-hand side.
Since infinity is a pole of order $2d+2$, we have $n_{\infty} = -2d-2$. Then, if 
$\varphi_{\infty}$ is the angle at infinity,  \eqref{Teichmuller_sum} gives us $\varphi_{\infty} = \frac{\pi}{d}$
as claimed.

The same conclusion holds if $\Omega$ has no zeros of $Q$ and if it is 
bounded by two unbounded trajectories emanating from a zero of order $m$ 
and making an interior angle of $\frac{2\pi}{m+2}$ at the zero. 

If we put 
\begin{equation} \label{eq:D}	
D(z) = \frac{1}{\pi i} \int_{z^*}^z Q(s)^{1/2} ds 
\end{equation}
where $Q(s)^{1/2}$ is an analytic branch of the square root defined in a
neighborhood of infinity, then horizontal trajectories $\gamma$ are characterized by 
\[ \Im D(z) = \text{const } \qquad \text{ on } \gamma, \]
while $\Re D$ is constant on vertical trajectories.

\begin{Lem} \label{differentD}
Let $D$ be as in \eqref{eq:D} where $Q^{1/2}$ is an analytic square root
defined in a neighborhood $U$ of infinity. 
Suppose  $\gamma_1$ and $\gamma_2$ are two unbounded trajectories of $-Q(z) dz^2$ 
that end at the same angle  at infinity. Suppose $ \Im D(z) = c_1$   on $\gamma_1 \cap U$  and
$\Im D(z) = c_2$ on $\gamma_2 \cap U$. If $\gamma_1 \neq \gamma_2$ then $c_1 \neq c_2$.
\end{Lem}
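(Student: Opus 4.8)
I would prove the contrapositive: suppose $c_1=c_2=:c$ while $\gamma_1\ne\gamma_2$, and derive a contradiction. Since for us a trajectory is always a maximal one, and through any regular point of $Q$ there passes a unique horizontal trajectory (see \cite{Str}), it suffices to exhibit a single point lying on both $\gamma_1$ and $\gamma_2$. Let $\theta_{j_0}$ be the common limiting direction of $\gamma_1,\gamma_2$ at infinity, one of the $2d$ directions of Lemma~\ref{lem:angles}.

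First I would record the behaviour of $D$ near infinity. Writing $Q(z)=q_0z^{2d-2}+O(z^{2d-3})$ with $q_0\ne0$ gives $D'(z)=\frac{1}{\pi i}Q^{1/2}(z)=\frac{q_0^{1/2}}{\pi i}z^{d-1}+O(z^{d-2})$, hence $D(z)=\alpha z^{d}+O(z^{d-1}\log|z|)$ with $\alpha=\frac{q_0^{1/2}}{\pi i\,d}$. With $z=re^{i\theta}$ and $\psi_0=\arg q_0^{1/2}$ one obtains
\[
\Im D(re^{i\theta})=-\tfrac{|q_0|^{1/2}}{\pi d}\,r^{d}\cos(d\theta+\psi_0)+O(r^{d-1}\log r),
\qquad
\partial_\theta\Im D(re^{i\theta})=\tfrac{|q_0|^{1/2}}{\pi}\,r^{d}\sin(d\theta+\psi_0)+O(r^{d-1}).
\]
Because $\Im D$ stays bounded (equal to $c$) along $\gamma_i$ as it runs off to infinity in the direction $\theta_{j_0}$, the first identity forces $\cos(d\theta_{j_0}+\psi_0)=0$ — this is just the angle relation \eqref{angles} — so $|\sin(d\theta_{j_0}+\psi_0)|=1$. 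Fixing $\varepsilon$ with $0<\varepsilon<\pi/(2d)$, for $|\theta-\theta_{j_0}|\le\varepsilon$ the number $d\theta+\psi_0$ stays within $\pi/2$ of an extremum of the cosine, so $|\sin(d\theta+\psi_0)|\ge\cos(d\varepsilon)>0$; the second identity then shows that, for all $r\ge R$ with $R$ large enough, $\partial_\theta\Im D(re^{i\theta})$ keeps a constant sign and has modulus comparable to $r^{d}$ throughout $|\theta-\theta_{j_0}|\le\varepsilon$. Consequently, for each such $r$, the map $\theta\mapsto\Im D(re^{i\theta})$ is strictly monotone on $[\theta_{j_0}-\varepsilon,\theta_{j_0}+\varepsilon]$; in particular, the level set $\{\Im D=c\}$ meets the circle $|z|=r$ in at most one point with $|\arg z-\theta_{j_0}|\le\varepsilon$.

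To finish: since each of $\gamma_1,\gamma_2$ tends to infinity in the direction $\theta_{j_0}$, for every large $r$ there is a point $\zeta_i(r)\in\gamma_i$ with $|\zeta_i(r)|=r$ and $|\arg\zeta_i(r)-\theta_{j_0}|<\varepsilon$, and $\Im D(\zeta_i(r))=c$ because $\Im D\equiv c$ on $\gamma_i$. By the monotonicity just established, $\zeta_1(r)=\zeta_2(r)$, so this common point lies on both trajectories. As $r$ exceeds the modulus of every zero of $Q$, it is a regular point, and uniqueness of the trajectory through it gives $\gamma_1=\gamma_2$, contradicting $\gamma_1\ne\gamma_2$.

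The crux — and the only real obstacle — is the monotonicity step, i.e.\ that the level curve $\{\Im D=c\}$ hits a large circle exactly once near the direction $\theta_{j_0}$; this is where the leading-order asymptotics of $D$ and the choice $\varepsilon<\pi/(2d)$ enter. One small technicality to handle along the way is that $D$ may have a logarithmic singularity at infinity (the coefficient of $z^{-1}$ in $Q^{1/2}$ need not vanish), so one should work with the single-valued branch of $D$ on the simply connected sector $\{\,|z|>R,\ |\arg z-\theta_{j_0}|<\varepsilon\,\}$; replacing $D$ by this branch shifts $c_1$ and $c_2$ by the same constant and hence does not affect the claim.
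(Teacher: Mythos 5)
Your proof is correct, and it reaches the conclusion by a route that differs from the paper's in its implementation, though the underlying idea is the same: near infinity in the common direction $\theta_{j_0}$ one finds a transversal on which $\Im D$ is strictly monotone, so two distinct trajectories ending at that angle cannot carry the same value of $\Im D$. The paper's transversal is a two-sided unbounded \emph{vertical} trajectory joining the directions $\theta_{j_0}\pm\frac{\pi}{2d}$: on it $\Re D$ is constant and $D'=\frac{1}{\pi i}Q^{1/2}$ has no zeros, so $\Im D$ is automatically strictly monotone with no computation, and since both $\gamma_1$ and $\gamma_2$ must cross it, $c_1\neq c_2$ follows directly. You instead use the circular arcs $\{|z|=r,\ |\arg z-\theta_{j_0}|\le\varepsilon\}$ as transversals and establish monotonicity of $\theta\mapsto\Im D(re^{i\theta})$ from the leading-order asymptotics $D(z)=\alpha z^d+O(z^{d-1}\log|z|)$, then argue by contrapositive: if $c_1=c_2$ the two trajectories would meet the arc at the same point, and uniqueness of the maximal trajectory through a regular point forces $\gamma_1=\gamma_2$. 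Your computations check out (in particular $\partial_\theta E(re^{i\theta})=ire^{i\theta}E'(re^{i\theta})=O(r^{d-1})$ so the logarithm does not pollute the derivative, and the condition $\varepsilon<\pi/(2d)$ keeps $|\sin(d\theta+\psi_0)|\ge\cos(d\varepsilon)>0$), and your remark about working with a single-valued branch of $D$ on a simply connected sector is a legitimate point that the paper passes over silently. What the paper's argument buys is brevity and the avoidance of explicit asymptotics; what yours buys is a self-contained, quantitative picture of the level set $\{\Im D=c\}$ near infinity and an explicit justification of why the angle relation \eqref{angles} holds, at the cost of a longer estimate.
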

\begin{proof}
Suppose that the two trajectories $\gamma_1$ and $\gamma_2$  
end at infinity at asymptotic angle $\theta_{j_0}$ for some $j_0$, see \eqref{angles}.

The same local structure at infinity holds true for the vertical trajectories,
i.e., for the trajectories of $Q(z) dz^2$. The unbounded vertical trajectories 
end at asymptotic angles $\theta_j + \frac{\pi}{2d}$, $j=-d+1, \ldots, d$.
Also $\Re D(z)$ is constant on a vertical trajectory.

Take a vertical  trajectory that is unbounded on both sides, and tends
to infinity at angles $\theta_{j_0} - \frac{\pi}{2d}$ and $\theta_{j_0} + \frac{\pi}{2d}$.
We may assume that the vertical trajectory is close enough to infinity so that it
does not contain or enclose any zeros of $Q$. Hence it does not contain any zeros of 
$D'$, since $D'(z) = \frac{1}{\pi i} Q(z)^{1/2}$.
Since $\Re D(z)$ is constant on the vertical  trajectory, this implies
that $\Im D(z)$ is strictly monotonic along the vertical trajectory.

Both $\gamma_1$ and $\gamma_2$  intersect the vertical trajectory,
and if $\gamma_1 \neq \gamma_2$ this will be at different points where $\Im D$
has distinct values and therefore $c_1 \neq c_2$.
\end{proof}

In the situation where $V(z) = - \frac{i}{d} z^d + \mathcal{O}(z^{d-1})$ and $Q$ satisfies \eqref{defQ},
then $Q(z) = - \frac{1}{4} z^{2d-2} + \mathcal{O}(z^{2d-3})$, so that the asymptotic angles \eqref{angles}
are given by
\begin{equation} \label{angles2} 
	\theta_j =  \frac{j\pi}{d}, \qquad j = -d+1, \ldots, d.
	\end{equation}

\section{Proof of Theorem \ref{Thm_single_cut_V1}\label{sect_deg3}}

\subsection{Introduction}
We are considering the cubic case
\[ V(z) = - \frac{i z^3}{3} + i K z \]
with a parameter $K \in \mathbb R$.
In view of \eqref{Qasympt} we are looking for a polynomial  $Q$ of degree $4$ satisfying
\begin{equation} \label{Q3expansion}
\begin{aligned} 
	Q(z) & =  \frac{1}{4} (- i z^2 + i K)^2  - \frac{-i z^2 + iK}{z} + \mathcal{O}(1) \\
	 & =  -\frac{1}{4}z^4 + \frac{K}{2}z^2+iz+C
	 \end{aligned}
	 \end{equation} 
for some constant $C$.
Since we are interested in the family $\mathcal T = \mathcal T_{2,1}$ there is a symmetry
with respect to the imaginary axis. It means that the constant $C$ is real.

In order to be in the one cut case we should have that $Q$ has two simple
zeros $z_1$, $z_2$ and therefore one double zero $z_0$. Thus
\begin{align}
	Q(z) = -\frac{1}{4}(z-z_1)(z-z_2)(z-z_0)^2. \label{Q3sym} \end{align} 
Because of the symmetry in the imaginary axis,  we can describe the zeros
with three real parameters $a,b, c$ as follows\footnote{There is an a priori possibility of $Q$
having a double zero and two simple zeros on the imaginary axis. 
However, it turns out that such an ansatz will lead to a contradiction. }
\begin{align} \label{Q3zeros}
	z_0=-ai, \qquad z_1=-b+ci, \qquad	z_2=b+ci,
\end{align}
with $b > 0$.

\begin{Lem} \label{LQ1} 
Suppose that $Q$ is a polynomial \eqref{Q3expansion}, \eqref{Q3sym} 
with zeros \eqref{Q3zeros}.
Then $b$ is the unique positive real root of the equation 
\begin{align} 
	b^6 - 2Kb^4 -8=0
		\label{b_vs_K} \end{align}
		and $a$ and $c$ satisfy
\begin{align} \label{ac_vs_b}
a=c=\frac{2}{b^2}.
\end{align}
The unknown parameter $C$ in \eqref{Q3expansion} is given by 
\begin{equation} \label{C_vs_b}
	C=-\frac{b^6+4}{b^8}. 
	\end{equation}
\end{Lem}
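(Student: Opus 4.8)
The plan is to exploit the symmetry and match coefficients. Starting from the two expressions for $Q$, namely the expanded form
\[ Q(z) = -\tfrac14 z^4 + \tfrac{K}{2} z^2 + i z + C \]
coming from \eqref{Q3expansion}, and the factored form \eqref{Q3sym} with zeros given by \eqref{Q3zeros},
\[ Q(z) = -\tfrac14 (z+b-ci)(z-b-ci)(z+ai)^2 = -\tfrac14\bigl((z-ci)^2 - b^2\bigr)(z+ai)^2, \]
I would simply multiply out the right-hand side and compare the coefficients of $z^3, z^2, z^1, z^0$ with those on the left. The coefficient of $z^4$ already matches. This gives four real equations in the three unknowns $a,b,c$ (plus $C$, which appears only in the constant term). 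The first nontrivial step is the $z^3$ coefficient: expanding, the $z^3$ term of $(z-ci)^2(z+ai)^2 = ((z-ci)(z+ai))^2 = (z^2 + (a-c)iz + ac)^2$ has coefficient $2(a-c)i$, and the $-b^2(z+ai)^2$ part contributes $-2b^2 ai$ at order... wait, that is order $z^1$; at order $z^3$ only the first product contributes, so the $z^3$ coefficient of $Q$ is $-\tfrac14 \cdot 2(a-c)i = -\tfrac{i}{2}(a-c)$. Since the left-hand side has no $z^3$ term, this forces $a = c$. That is the cleanest way to get the relation $a=c$ and I would do it first.

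Having reduced to $a=c$, the quadratic factor becomes $z^2 + a^2$ (since $(z-ai)(z+ai) = z^2+a^2$ when $c=a$), so
\[ Q(z) = -\tfrac14\bigl((z^2+a^2) - b^2 - ??\bigr)\cdots \]
— more carefully, with $c=a$ we have $(z-ci)(z+ai) = (z-ai)(z+ai) = z^2 + a^2$, and the remaining factor from $(z-ci)^2 - b^2$ with $c=a$ is not simply $z^2+a^2$; let me instead just write $Q(z) = -\tfrac14\bigl((z-ai)^2 - b^2\bigr)(z+ai)^2$ and expand. Set $w = z^2$ where possible. Matching the $z^2$ coefficient yields one equation relating $a, b, K$; matching the $z^1$ coefficient (whose presence is exactly the $iz$ term, the only genuinely nonsymmetric feature) yields a second equation; and matching the constant term defines $C$. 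The two genuine constraints, after eliminating, should collapse to the single sextic $b^6 - 2Kb^4 - 8 = 0$ together with $a = 2/b^2$. I expect the $z^1$-coefficient equation to read something like (const)$\cdot a b^2 = 1$ or $a^3 - a b^2 = $ (something), which combined with the $z^2$ equation gives $a = 2/b^2$; back-substitution then produces the sextic in $b$ alone.

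For the uniqueness of the positive root of \eqref{b_vs_K}, I would argue that $f(b) := b^6 - 2Kb^4 - 8$ satisfies $f(0) = -8 < 0$ and $f(b) \to +\infty$ as $b\to\infty$, so a positive root exists; and writing $f(b)/b^4 = b^2 - 2K - 8/b^4$, the function $b \mapsto b^2 - 8/b^4$ is strictly increasing on $(0,\infty)$, hence $f$ has exactly one positive zero. (Alternatively, substitute $t = b^2 > 0$ and analyze $t^3 - 2Kt^2 - 8$, whose derivative $3t^2 - 4Kt = t(3t-4K)$ shows at most the relevant monotonic behavior for $t>0$ once one checks the sign at the critical point; the $g(t)/t = t^2 - 2Kt - 8/t$ version is cleanest.) Finally $C$ is read directly off the constant-term match: the constant term of $-\tfrac14((z-ai)^2-b^2)(z+ai)^2$ at $z=0$ is $-\tfrac14(-a^2 - b^2)(-a^2) = -\tfrac14 a^2(a^2+b^2)$, so $C = -\tfrac14 a^2(a^2+b^2)$; substituting $a = 2/b^2$ gives $C = -\tfrac14 \cdot \tfrac{4}{b^4}\bigl(\tfrac{4}{b^4} + b^2\bigr) = -\tfrac{1}{b^4}\cdot\tfrac{4 + b^6}{b^4} = -\tfrac{b^6+4}{b^8}$, matching \eqref{C_vs_b}.

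The main obstacle is purely bookkeeping: correctly expanding the degree-four product with the complex shifts and keeping track of which of the four coefficient equations are independent. The only subtle point worth flagging is that the $z^3$-coefficient equation forcing $a=c$ relies on there being genuinely no $z^3$ term, i.e.\ on the specific form \eqref{Q3expansion}; this is where the symmetry in the imaginary axis (reality of $C$, and the ansatz \eqref{Q3zeros}) is doing its work, and it is also where the footnoted alternative ansatz (double zero plus two simple zeros all on the imaginary axis) would have to be examined and ruled out by a similar coefficient comparison leading to an inconsistency.
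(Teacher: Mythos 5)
Your proposal is correct and follows essentially the same route as the paper: equate coefficients of $z^3,z^2,z^1,z^0$ between \eqref{Q3expansion} and the factored form \eqref{Q3sym} with zeros \eqref{Q3zeros}, deduce $a=c$ from the vanishing $z^3$ coefficient, get $ab^2=2$ from the $z$ coefficient, the sextic \eqref{b_vs_K} from the $z^2$ coefficient, and $C$ from the constant term. The only (immaterial) difference is that the paper settles uniqueness of the positive root of \eqref{b_vs_K} by Descartes' rule of signs, whereas you use the monotonicity of $b\mapsto b^2-8/b^4$; both are fine.
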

\begin{proof} %[\textbf{Proof of Lemma \ref{LQ1}}]
Substituting the values \eqref{Q3zeros} into \eqref{Q3sym} and equating coefficients
with \eqref{Q3expansion} leads to the system of algebraic equations
\begin{equation} \label{Q3equations}
\begin{cases}
\frac{i}{2}(c-a)=0, \\
\frac{1}{4}b^2+\frac{1}{4}c^2-ca+\frac{1}{4}a^2 = \frac{1}{2}K, \\
\frac{ia}{2}(b^2+c^2-ca)=i, \\
-\frac{1}{4} (b^2 + c^2) a^2 = C.
\end{cases}
\end{equation}
Thus $a=c$ and the third equation reduces to $a b^2 = 2$. This proves \eqref{ac_vs_b}.

Plugging \eqref{ac_vs_b} into the second equation of \eqref{Q3equations}
leads to 
\[ \frac{1}{4}b^2-\frac{1}{2}a^2 - \frac{1}{2}K=0 \]
which can be rewritten as \eqref{b_vs_K}.
The value of $C$ as given in \eqref{C_vs_b} then follows from the fourth
equation of \eqref{Q3equations}.

Finally, we note that for a given real $K$, the equation  \eqref{b_vs_K} has exactly one positive real zero,
as follows for example from the Descartes rule of signs (see part 5, chapter 1 from \cite{PS}), 
according to which the number of
positive zeros of  a polynomial with real coefficients  is either equal to the number of sign 
changes among consecutive nonzero coefficients, or is less than it by a multiple of two.
\end{proof}

\subsection{Lemma on the trajectories of $-Q(z)dz^2$}

Lemma \ref{LQ1} shows that for any given $K \in \mathbb R$ there is only one
possible candidate for $Q$ with two simple zeros and one double zero 
satisfying \eqref{Q3zeros}. It will give the
quadratic differential $-Q(z) dz^2$ and in order to have a single arc,
we need to have that one of the three critical trajectories that starts at $z_1$ ends at $z_2$. 

\begin{Lem} \label{LQ1_candidate}
Let $K \in \mathbb R$ and let $Q$ be the polynomial \eqref{Q3sym} associated with $K$ as
described above. Then there is a critical value $K^*$ such that the following hold.
\begin{enumerate}
\item[\rm (a)] For $K < K^*$, there is a  critical trajectory $\gamma$ of the quadratic differential $-Q(z)dz^2$ connecting $z_1$ and $z_2$. 
\item[\rm (b)] For $K = K^*$, there is a critical trajectory $\gamma_1$ connecting $z_1$ and the double zero $z_0=-ia$
on the imaginary axis, and a critical trajectory $\gamma_2$ connecting $z_2$ and $z_0$. The trajectories
$\gamma_1$ and $\gamma_2$ meet at $z_0$ at an angle of $\frac{\pi}{2}$.
\item[\rm (c)] For $K >  K^*$, there is no union of bounded critical trajectories connecting $z_1$ and $z_2$.  
\end{enumerate}
\end{Lem}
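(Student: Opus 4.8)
The plan is to encode the trichotomy in the sign of a single real-analytic function $\Phi(K)$ attached to the quadratic differential, to read off case (c) directly from $\Phi<0$, to obtain the two ``connecting'' configurations from Teichm\"uller's lemma when $\Phi\ge 0$, and to get existence and uniqueness of $K^*$ from monotonicity of $\Phi$.

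\emph{Symmetry and local structure.} Since the constant $C$ in \eqref{Q3expansion} is real, $Q(-\bar z)=\overline{Q(z)}$, so the trajectory picture of $-Q(z)\,dz^2$ is invariant under reflection in the imaginary axis, which interchanges $z_1$ and $z_2$ and fixes $z_0$. A short computation gives $Q(iy)=-\tfrac14\big(b^2+(y-c)^2\big)(y+a)^2\le 0$ for real $y$, with its only (double) zero at $y=-a$; hence $Q^{1/2}$ is purely imaginary on the imaginary axis, which is therefore exactly the union of the two vertical critical trajectories issuing from $z_0$. Also $Q''(z_0)=\tfrac12\big(b^2+(a+c)^2\big)>0$, so by Lemma \ref{m_traject} the four horizontal critical trajectories leave $z_0$ at the angles $\pm\tfrac\pi4$ and $\pm\tfrac{3\pi}4$, one into each quadrant determined by the axes at $z_0$, and by \eqref{angles2} every unbounded trajectory approaches $\infty$ along one of the six directions $\tfrac{j\pi}3$.

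\emph{The function $\Phi$ and case (c).} Fix a branch $Q^{1/2}$ as in \eqref{defQ}, so that $Q^{1/2}$ has residue $\pm1$ at infinity; then every period of $Q^{1/2}\,ds$ around the branch points $z_1,z_2$ is an integer multiple of $2\pi i$, so $D(z)=\tfrac1{\pi i}\int_{z_0}^z Q^{1/2}(s)\,ds$ is well defined modulo an additive integer and $\Im D$ is a single-valued real-analytic function on $\mathbb C$ (with $\Im D(z_1)$ defined as a limit, since $z_1$ is merely a square-root branch point). Choose the sign of the branch so that $\Im D\ge 0$ on the imaginary axis; there $\Im D$ is strictly positive away from $z_0$, vanishes at $z_0$, and tends to $+\infty$ along each half-axis. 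The reflection symmetry gives $\Im D(-\bar z)=\Im D(z)$, hence $\Phi(K):=\Im D(z_1)=\Im D(z_2)$. Since $\Im D$ is constant along each horizontal trajectory, a critical trajectory from $z_1$ lies in the level set $\{\Im D=\Phi(K)\}$; if $\Phi(K)<0$ this set misses the imaginary axis, so any such trajectory stays in the open left half-plane, where $z_1$ is the only zero of $Q$, so by Lemma \ref{No_loop} it cannot close up and must be unbounded (and likewise for $z_2$; none reaches $z_0$, which lies at level $0\ne\Phi(K)$). Thus there is no bounded critical-trajectory network joining $z_1$ and $z_2$: case (c).

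\emph{Cases (a) and (b).} If $\Phi(K)\ge 0$ the level $\{\Im D=\Phi(K)\}$ does meet the imaginary axis -- at $z_0$ when $\Phi(K)=0$, and otherwise at a point $p$ of the upper half-axis -- and this point is fixed by the reflection, so the unique trajectory through it is reflection-symmetric. A Teichm\"uller-lemma argument (Theorem \ref{Teichmuller}) applied to the domains this symmetric trajectory bounds together with the imaginary axis and the circle at infinity, combined with Lemma \ref{No_loop} and the fact that admissible angles at infinity are multiples of $\pi/d=\pi/3$ (so that the interior angle $\pi/2$ forced at $p$, and the values that would be forced at $\infty$, cannot occur), rules out the possibility that this trajectory runs off to infinity on either side; hence it terminates at zeros of $Q$, and by the level condition these are $z_1$ and $z_2$. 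For $\Phi(K)>0$ this yields a single critical arc $\gamma$ from $z_1$ to $z_2$ crossing the imaginary axis at $p$: case (a). For $\Phi(K)=0$ it yields the arcs $z_1$--$z_0$ and $z_0$--$z_2$, which are the $\tfrac{3\pi}4$- and $\tfrac\pi4$-trajectories from $z_0$ and meet there at the angle $\tfrac{3\pi}4-\tfrac\pi4=\tfrac\pi2$: case (b). (The a priori competitor in which the $\tfrac\pi4$-trajectory from $z_0$ itself crosses the axis to reach $z_1$ is impossible, since its reflection would then pass through the same interior axis point, and distinct trajectories meet only at zeros of $Q$.)

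\emph{Existence of $K^*$, and the main obstacle.} By Lemma \ref{LQ1} the data $z_0,z_1,z_2,C$ depend real-analytically on $K$ through \eqref{b_vs_K}--\eqref{C_vs_b}, so $\Phi$ is real-analytic. Differentiation in $K$ is legitimate because the double zero persists, so $Q_K(z_0(K))\equiv 0\equiv Q_K'(z_0(K))$ forces $\partial_K Q_K$ to vanish at $z_0$; this cancels the would-be non-integrable factor $1/(s-z_0)$ and reduces $\Phi'(K)$ to an elementary integral of $\tfrac{s-ia}{\sqrt{(s-z_1)(s-z_2)}}$ over $[z_0,z_1]$, whose evaluation shows $\Phi'$ has constant sign, so $\Phi$ is strictly monotone. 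Since $\Phi(0)>0$ (the one-cut case $K=0$ is established in \cite{DHK}) and $\Phi(K)<0$ for $K$ large (as $K\to+\infty$ one has $b\sim\sqrt{2K}$, the simple zeros escaping to real infinity and $z_0\to0$, with the level of $z_1$ falling below $0$), $\Phi$ has a unique zero $K^*$, and cases (a), (b), (c) hold precisely for $K<K^*$, $K=K^*$, $K>K^*$; spelling out $\Phi(K^*)=0$, i.e. $\Re\int_{z_0}^{z_1}Q^{1/2}(s)\,ds=0$ (an integral that is elementary exactly because $z_0$ is a double zero), and substituting turns it into \eqref{vequation}, fixing $K^*$ as in \eqref{defKstar}. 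I expect the delicate step to be the Teichm\"uller case analysis that excludes escape to infinity of the symmetric trajectory through $p$ (resp. $z_0$) -- this is where all the global combinatorics of the trajectory picture is pinned down, much as in \cite[Theorem 2.2]{DHK} -- together with the constant-sign verification for $\Phi'$; the branch and period normalization making $\Im D$ single-valued must also be set up with care.
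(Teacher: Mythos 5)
Your overall strategy is the same as the paper's: encode everything in the sign of $\Im D$ at the double zero (your $\Phi(K)$ is exactly the paper's $F(-a)$ up to the branch normalization), get case (c) from the level set missing the imaginary axis, and get a unique $K^*$ from strict monotonicity. Your treatment of case (c), the symmetry reduction, and the monotonicity step are sound; for the latter you differentiate in $K$ under the integral (using $\partial_K Q(z_0)=0$, so that $\partial_K Q(s)=\tfrac12(s-z_0)(s+z_0)$ cancels the simple zero of $Q^{1/2}$ at $z_0$), which is a legitimate variant of the paper's route of writing $F(-a)$ in closed form and computing $\frac{d}{da}F(-a)=\frac{1}{\sqrt2\pi}(2a^3+1)^{3/2}a^{-5/2}>0$; you would still need to actually verify the constant sign of your elementary integral, and your check that $\Phi(0)>0$ is the paper's evaluation at $a=1$.

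The genuine gap is in cases (a) and (b). First, when $\Phi(K)>0$ the level set $\{\Im D=\Phi(K)\}$ meets the imaginary axis in \emph{two} points $iy_1$, $iy_2$ with $y_2<-a<y_1$, not one, and the trajectory through one of them really does run off to infinity in both directions (at angles $-\pi/3$ and $-2\pi/3$; see Figure \ref{figure:subcrit_dif}). So no argument can show that ``the'' symmetric trajectory through an axis point at level $\Phi(K)$ must terminate at zeros of $Q$. Second, the Teichm\"uller computation you invoke does not produce a contradiction: for a domain bounded by half of such a trajectory and the imaginary half-axis (a vertical trajectory), with interior angle $\pi/2$ at the regular vertex $p$, formula \eqref{Teichmuller_sum} forces the angle at infinity to be $\tfrac{\pi}{6}+k\tfrac{\pi}{3}$, and $\tfrac{\pi}{6}$ is precisely the angle between a horizontal end at $\pi/3$ or $2\pi/3$ and the vertical end of the imaginary axis at $\pi/2$ — so the ``forbidden'' values are in fact attainable. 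The paper closes this step differently: assuming no critical trajectory joins $z_1$ to $z_2$, the three critical trajectories from each of $z_1$, $z_2$ plus the two two-sided unbounded level trajectories through $iy_1$ and $iy_2$ would require ten asymptotic directions at infinity all carrying the same level of $\Im D$, while only six directions exist and Lemma \ref{differentD} forbids two distinct same-level trajectories from sharing a direction. You would need either this counting argument, or the finer sign analysis of $\Im D$ on the segment $[z_1,z_2]$ (which the paper uses in Lemma \ref{extension} to decide that the connecting arc passes through $iy_1$ rather than $iy_2$), to complete (a); and case (b) then follows by continuity in $K$ rather than by a direct Teichm\"uller exclusion.
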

\begin{proof} 
We consider the function 
\begin{align} \nonumber
	D(z) & = \frac{1}{\pi i}\int_{z_1}^z Q^{1/2}(s)ds \\
		& = \frac{1}{2\pi}\int_{z_1}^z(s-z_0)\left[(s-z_1)(s-z_2)\right]^{1/2}ds \label{D3}
\end{align}
where the branch of the square root is specified below.
The critical trajectories emanating from $z_1$ are characterized by $\Im D(z) = 0$.
We want to show that one critical trajectory comes to the imaginary axis and therefore we
study
\begin{equation} \label{Fy} 
	F(y) = \Im D(iy), \qquad y \in \mathbb R. 
	\end{equation}

For a given $y \in \mathbb R$ we choose the branch cut of	$\left[(s-z_1)(s-z_2)\right]^{1/2}$
along a path from $z_1$ to $z_2$ that intersects the imaginary axis once in a number $iy^*$ with
$y^* < y$. In addition we take $\left[(s-z_1)(s-z_2)\right]^{1/2} \sim s$ as $s \to \infty$.
Then \eqref{D3} can be evaluated. We use \eqref{Q3zeros} and change
variables $s = u + ic$ to obtain
\begin{align*} 
	D(z) & = \frac{1}{2\pi} \int_{-b}^{z-ic} (u + (a+c)i) \sqrt{u^2-b^2} du \\
	& = \frac{1}{6\pi} ((z-ia)^2 - b^2)^{3/2} - \frac{a}{2\pi i} (z-ia) ((z-ia)^2 - b^2)^{1/2} \\
	& \quad + \frac{1}{\pi i} \log (z-ia + ((z-ia)^2 - b^2)^{1/2}) - \frac{1}{2\pi} \log(-b)
		\end{align*}
where we also used that $c=a$ and $ab^2 =2$, see \eqref{ac_vs_b}.

Then by \eqref{Fy} and the choice of the branch cut,
\begin{align} \nonumber
   F(y) & = -\frac{1}{6\pi} ((y-a)^2 + b^2)^{3/2}  - \frac{a}{2\pi} (y-a) ((y-a)^2 + b^2)^{1/2} \\
   & \quad - \frac{1}{\pi} \log (y-a + ((y-a)^2 + b^2)^{1/2}) 
		+ \frac{1}{\pi} \log(b), \label{FoniR}
		\end{align}
where the fractional powers are all non-negative. 

The derivative of \eqref{FoniR} is 
\begin{align*} 
	F'(y) & = - \frac{1}{2\pi} \frac{y^3- a y^2-a^2y+b^2y+a^3+2}{\sqrt{(y-a)^2+b^2}} \\
	& = - \frac{1}{2\pi} (y+a)\sqrt{(y-a)^2+b^2},
\end{align*}
which is zero for $y=-a$ only. The derivative is negative for $y > -a$ and positive
for $y < -a$. Thus $F$ is strictly decreasing for $y > -a$ and strictly increasing
for $y < -a$ (up to the branch cut). Also
\[ F(y) \to - \infty, \qquad \text{as } y \to + \infty. \]

Thus $F$ has a zero if and only if $F(-a) \geq 0$. We  compute from \eqref{FoniR}
\begin{align*}  
	F(-a) & = - \frac{1}{6\pi} (4a^2 + b^2)^{3/2} + \frac{a^2}{\pi} (4a^2 + b^2)^{1/2} 
	- \frac{1}{\pi} \log(-2a + \sqrt{4a^2 + b^2}) + \frac{\log b}{\pi},
	\end{align*}	
and using again $a b^2 =2$, see \eqref{ac_vs_b},
\begin{align}
	F(-a)  =	 \frac{1}{6 \pi} \left[ (2a^{3/2} - 2 a^{-3/2}) \sqrt{4a^3 + 2} +  6 \log(2 a^{3/2} + \sqrt{4a^3 +2}) 
		- 3 \log 2 \right] 
	 \label{eq_determine_a_crit}. 
\end{align}
The derivative of \eqref{eq_determine_a_crit} with respect to $a$ is remarkably simple
\begin{align} \label{dFda}
	\frac{d}{d a} F(-a) &  = \frac{1}{\sqrt{2} \pi} \frac{(2 a^3 + 1)^{3/2}}{a^{5/2}},
\end{align}
which is positive, since $a > 0$. Thus $a \mapsto F(-a)$ is strictly increasing as a function of $a > 0$.
For $a=1$, we have
\[ F(-a) = F(-1) = - \log (-2 + \sqrt{6}) + \frac{1}{2} \log 2 \approx 1.1462 > 0 \]
and $F(-a) \to -\infty$ as $a \to 0+$.
Thus there is a unique value $a = a^* > 0$ such that $F(-a^*) = 0$, namely
\[ a^* = 0.6821733958\cdots \]
Since $ab^2 = 2$ the corresponding value of $b > 0$ is
\[ b^*  = 1.712251710\cdots \]
and according to \eqref{b_vs_K}
\begin{equation} \label{valueKstar} 
	K^* = \frac{1}{2} (b^*)^2 - \frac{4}{(b^*)^4} = \frac{1}{a^*} - (a^*)^2 \approx 1.0005424 \cdots. 
	\end{equation}
It is easy to see from \eqref{b_vs_K} and \eqref{ac_vs_b} that $K < K^*$ if and only if $a > a^*$ and
$K > K^*$ if and only if $0 < a < a^*$.

\medskip

For $K > K^*$ we have $a < a^*$ and then $F(y) < 0$ for every $y \in \mathbb R$, which means
that the level line $\Im D(z) = 0$ 
does not intersect the imaginary axis. 
Hence the critical trajectories that emanate from $z_1$ do not intersect the imaginary
axis, and so there is no critical trajectory (or union of critical trajectories) that connects $z_1$ with $z_2$. 
This proves part (c) of the lemma. 

\medskip

Now let $K < K^*$, so that $a > a^*$. There are three critical trajectories emanating from $z_1$. Suppose
none of them ends at $z_2$. They also cannot end at $z_0$ since $F(-a) > 0$ and $z=z_0 = ia$ is not
on the level line $\Im D(z) = 0$. Then all three critical
trajectories have to stay in the left half-plane. Since trajectories cannot be closed, the
trajectories then tend to infinity at distinct angles   $2 \pi/3$, $\pi$ and $4 \pi/3$, 
see Lemma \ref{lem:angles} and \eqref{angles2}.
By symmetry the three trajectories emanating from $z_2$ stay in the right half-plane and
tend to infinity at distinct angles $-\pi/3$, $0$ and $\pi/3$. 

Now since $a > a^*$ there are two values $iy$, say $iy_1$ and $iy_2$ with $y_1 > y_2$, 
on the imaginary axis with $F(y) = \Im D(iy) = 0$. 
This means that there are two additional level lines $\Im D(z) = 0$ that cross the imaginary axis. 
These are non-critical trajectories of the quadratic differential that therefore extend
to infinity in both directions. However, each of the admissible directions is already
taken by a critical trajectory with $\Im D(z) = 0$, and we find a contradiction because
of Lemma \ref{differentD}.  As a consequence there has to be a critical trajectory connecting $z_1$ and $z_2$
that intersects the imaginary axis in one of the points $iy_1$ or $i y_2$.
This proves part (a) of the lemma.

\medskip
For $K = K^*$ we have $a=a^*$ and then the double zero $z_0$ is part of the level curve
$\Im D(z) = 0$. It is in fact the only point on the imaginary axis on this level curve. 
By a continuity argument from $K < K^*$ the critical trajectory that connects
$z_1$ with $z_2$ will pass through $z_0$. It makes an angle $\frac{\pi}{2}$ because 
of the local structure of trajectories at a double zero, see Lemma \ref{m_traject},
and the symmetry in the imaginary axis.
\end{proof}

From the proof of Lemma \ref{LQ1_candidate} we have that the
critical value $a^*$ is determined as $F(-a^*)$, with $F(-a)$ given by \eqref{eq_determine_a_crit}.
Putting $v = a^{-1/3}$ it is then easy to see that $v^* = (a^*)^{-1/3}$ 
satisfies \eqref{vequation}. 
Because of \eqref{valueKstar} we then have
\[ K^* = (v^*)^{-1/3} - (v^*)^{-2/3} \]
and so $K^*$ defined in \eqref{valueKstar} agrees with the definition
given in \eqref{defKstar} in Theorem \ref{Thm_single_cut_V1}.

\subsection{Proof of Theorem \ref{Thm_single_cut_V1} (c)}
Part (c) of Lemma \ref{LQ1_candidate} is enough to prove part (c) of
Theorem \ref{Thm_single_cut_V1}.

\begin{proof} 
Suppose $K > K^*$. If the zeros would accumulate on one arc, then this arc
would be a critical trajectory of the quadratic differential $-Q(z)^2 dz$ with
$Q$ given by \eqref{Q3expansion}--\eqref{Q3zeros}. The critical trajectory connects the two simple
zeros of $Q$.  If $K > K^*$ then $a < a^*$ and by item (c) 
of Lemma \ref{LQ1_candidate} there is no critical trajectory between the two
zeros $z_1$ to $z_2$ of $Q$.  This contradiction shows that the zeros do not
accumulate on one arc if $K > K^*$.

They cannot accumulate on more than two arcs since each endpoint would be a
zero of $Q$ given by \eqref{defQ}. However, $Q$ has degree four, and so cannot
have more than four zeros, which means at most two arcs.
\end{proof}

\subsection{Lemma on analytic extensions}

Next we focus on the case $K < K^*$.
To prove part (a)  of Theorem \ref{Thm_single_cut_V1} we need to know
that the critical trajectory $\gamma$ from $z_1$ and $z_2$ 
(which exists because of part (a) of Lemma \ref{LQ1_candidate})
extends to a contour $\Gamma$ in the class $\mathcal T_{2,1}$ which is 
an $S$-curve in the external field $\Re V$.

\begin{Lem} \label{extension}
\begin{enumerate}
\item[(a)]
Let $K < K^*$. Then the critical trajectory $\gamma$ from $z_1$ to $z_2$ has an
extension to a curve
\[ \Gamma = \Gamma_1 \cup \gamma \cup \Gamma_2 \]
where for $i=1,2$,  $\Gamma_i$ is a critical trajectory of $Q(z) dz^2$
that extends from $z_i$ to infinity.
The curve $\Gamma$ belongs to the class $\mathcal T_{2,1}$.

\item[(b)] Let $K = K^*$. Then the same statement holds true for the
 union $\gamma_1 \cup \gamma_2$ of critical
trajectories (see part (b) of Lemma \ref{LQ1_candidate}).
\end{enumerate}
\end{Lem}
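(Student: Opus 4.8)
The goal is to show that the bounded critical trajectory $\gamma$ (respectively $\gamma_1\cup\gamma_2$) can be completed to a contour $\Gamma\in\mathcal T_{2,1}$ by attaching, at each simple zero $z_1,z_2$, a \emph{vertical} trajectory of $-Q(z)dz^2$ running out to infinity, and that these two vertical trajectories land at asymptotic angles that place $\Gamma$ in the correct class, i.e.\ one end in the sector $S_2$ and the other in $S_1$. The strategy is to work with the function $D$ from \eqref{eq:D}, for which $\gamma$ lies on the level set $\{\Im D = 0\}$ and the attached arcs $\Gamma_i$ must lie on level sets $\{\Re D = \text{const}\}$; the two descriptions are compatible precisely at a simple zero $z_i$, where $D'(z_i)=0$ and the horizontal and vertical foliations interchange roles, so that exactly one vertical trajectory emanates from $z_i$ at the angle bisecting the two horizontal ones that do not coincide with $\gamma$.

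First I would fix the branch of $Q^{1/2}$ and recall from Lemma~\ref{m_traject} (with $m=1$) that three critical (horizontal) trajectories leave $z_i$ at equal angles, one of which is the arc $\gamma$; by the local normal form of a simple zero, between consecutive horizontal trajectories there is exactly one vertical trajectory, so there is a well-defined vertical trajectory $\Gamma_i$ leaving $z_i$ into, say, the right half-plane for $z_2$ and the left half-plane for $z_1$ (using the symmetry in the imaginary axis). Next I would show $\Gamma_i$ is unbounded: it cannot close up or return to a zero of $Q$ by Lemma~\ref{No_loop}, and it cannot hit the double zero $z_0$ because $z_0$ lies on $\{\Im D=0\}$ while $\Gamma_i$ lies on a level set $\{\Re D = \Re D(z_i)\}$ with $\Re D(z_i)\neq 0$ (this uses that $z_0\neq z_i$ and that $D$ separates these points — an argument paralleling the one already used via Lemma~\ref{differentD}); hence $\Gamma_i$ escapes to infinity. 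By Lemma~\ref{lem:angles} and \eqref{angles2} the vertical trajectories go to infinity at the angles $\theta_j+\frac{\pi}{2d}=\frac{j\pi}{3}+\frac{\pi}{6}$, and I must identify which one. Here I would invoke Teichm\"uller's lemma (Theorem~\ref{Teichmuller}) applied to the domain cut out by $\gamma$ together with $\Gamma_1$, $\Gamma_2$ and an arc at infinity, exactly as in the consequences of \eqref{Teichmuller_sum} spelled out after that theorem, to force $\Gamma_1$ and $\Gamma_2$ to exit at angles in consecutive sectors and on opposite sides of the imaginary axis, so that $\Gamma_1$ enters $S_2$ and $\Gamma_2$ enters $S_1$; symmetry in the imaginary axis pins down the pair. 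This gives $\Gamma=\Gamma_1\cup\gamma\cup\Gamma_2\in\mathcal T_{2,1}$.

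For part (b), with $K=K^*$, the only change is that the bounded part is $\gamma_1\cup\gamma_2$ passing through the double zero $z_0$ at angle $\frac{\pi}{2}$ (Lemma~\ref{LQ1_candidate}(b)); the endpoints of this bounded configuration are still the simple zeros $z_1,z_2$, so the identical construction attaches vertical trajectories $\Gamma_1,\Gamma_2$ there and the same Teichm\"uller count applies verbatim (the double zero contributes an extra vertex of order $+1$ inside or on the boundary, which one checks does not change the conclusion about the angles at infinity). The main obstacle is the middle step: ruling out that the vertical trajectory $\Gamma_i$ wanders back toward the imaginary axis or toward $z_0$ before escaping, and then correctly bookkeeping the angles at infinity via Teichm\"uller's lemma so as to land in the prescribed sectors rather than some other consecutive pair — this is where one must combine the monotonicity of $\Im D$ along vertical trajectories (as in the proof of Lemma~\ref{differentD}), the sign information on $F(y)=\Im D(iy)$ from the proof of Lemma~\ref{LQ1_candidate}, and the global constraint \eqref{Teichmuller_sum}.
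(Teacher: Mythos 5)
Your overall strategy -- attach the vertical (orthogonal) trajectories $\Gamma_1,\Gamma_2$ at the simple zeros, show they escape to infinity, and identify their asymptotic directions -- is the same as the paper's. But two steps in the middle do not hold up as written. First, your reason for why $\Gamma_i$ avoids the double zero is based on a false premise: for $K<K^*$ the point $z_0=-ia$ does \emph{not} lie on $\{\Im D=0\}$; on the contrary, $\Im D(z_0)=F(-a)>0$ is exactly the condition characterizing $K<K^*$ in the proof of Lemma \ref{LQ1_candidate} (the equality $\Im D(z_0)=0$ occurs only at $K=K^*$). Moreover $\Re D(z_1)=0$ since $z_1$ is the base point of $D$ in \eqref{D3}, and in any case a point lying on a horizontal level set does not prevent it from lying on a prescribed vertical level set, so the separation argument proves nothing. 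The correct way to rule out $\Gamma_i$ hitting a zero is confinement: $\Im D$ is strictly monotone along $\Gamma_i$ starting from the value $0$ at $z_i$, so $\Gamma_i$ never returns to the level set $\{\Im D=0\}$ and is trapped in a component of its complement that contains no zeros of $Q$.

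The more serious gap is the identification of the asymptotic angles. Applying Teichm\"uller's lemma to the domain bounded by $\Gamma_1\cup\gamma\cup\Gamma_2$ (vertices $z_1,z_2$ with interior angle $\pi$, each contributing $1-\pi\cdot\frac{3}{2\pi}=-\frac12$) yields an angle of $\frac{2\pi}{3}$ at infinity when the domain is zero-free -- so the two ends are two sector-widths apart, not ``consecutive,'' and by symmetry this is consistent with both the pair $(\tfrac{5\pi}{6},\tfrac{\pi}{6})$ (sectors $S_2,S_1$, as required) and the reflected pair $(-\tfrac{5\pi}{6},-\tfrac{\pi}{6})$. Teichm\"uller alone cannot break this tie; you need to know on which side of $\gamma$ the continuations $\Gamma_i$ depart, equivalently the full disposition of the zero level set of $\Im D$. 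The paper obtains this by showing that $\Im D(x+ia)<0$ on the open segment between $z_1$ and $z_2$, which forces $\gamma$ to cross the imaginary axis at the upper point $iy_1$ rather than $iy_2$; the non-critical zero-level trajectory through $iy_2$ then lies below $\gamma$ and occupies the directions $-\tfrac{\pi}{3},-\tfrac{2\pi}{3}$ at infinity, so the four unbounded critical trajectories take the remaining directions $0,\tfrac{\pi}{3},\tfrac{2\pi}{3},\pi$, and $\Gamma_1$ is trapped in the zero-free region between the branches at $\tfrac{2\pi}{3}$ and $\pi$, exiting at $\tfrac{5\pi}{6}\in S_2$. You flag that ``the sign information on $F$'' must be combined with the Teichm\"uller count, but this concrete computation -- which is the decisive step -- is missing from your proposal.
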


\begin{proof}
Assume $K < K^*$.
In the proof we continue to use the notions from the proof of Lemma \ref{LQ1_candidate}.
That is, $F(y) = \Im D(iy)$ (defined in \eqref{Fy})  takes its maximum value at $y=-a$,
it is strictly decreasing for $y > -a$   and strictly increasing for $y < -a$
with $\lim_{y \to \pm \infty} F(y) = -\infty$.
Since $K < K^*$ we have $F(-a) > 0$ and there are two values $y_1$, $y_2$ with
$y_1 > -a > y_2$ such that $F(y_1) = F(y_2) = 0$.
We can also check from  \eqref{FoniR} that $F(a) < 0$ so that
in fact
\begin{equation} \label{yordering} 
	y_2 < -a < y_1 < a. 
	\end{equation}

The critical trajectory from $z_1$ to $z_2$ intersects the imaginary axis in either $iy_1$ or $iy_2$.
Suppose that it does so in $iy_2$.
Since trajectories do not intersect we will then have that the trajectory through $iy_1$ is above
the critical trajectory, which goes from $z_1 = -b + ia$ to $z_2 = b + ia$. (Recall that $\Im z_{1,2} = c = a$, see \eqref{ac_vs_b}.) 
By \eqref{yordering}
this means that the trajectory  passing through $iy_1$ intersects the horizontal
segment 
\[ [z_1, z_2] := \{ x + ia \mid -b < x < b \} \]
between $z_1$ and $z_2$, and so $\Im D(z) = 0$ somewhere in the interior of this segment. 
However, for $z = x+ia$ with $-b < x < b$ we have by \eqref{D3}
\begin{align*} 
	D(x+ia) & = \frac{1}{2\pi} \int_{-b}^{x} (s  + 2ia)  (s^2 - b^2)^{1/2} ds \\
		& = \frac{i}{2\pi} \int_{-b}^x (s+2ia) \sqrt{b^2-s^2} ds 
	\end{align*}
and therefore
\[ \Im D(x+ia) = \frac{1}{2\pi} \int_{-b}^x s \sqrt{b^2-s^2} ds, \qquad  - b < x < b,  \]
which is $< 0$.  Thus the trajectory $\Im D(z) = 0$ passing through $iy_1$ cannot intersect the
segment $[z_1,z_2]$, and it follows that the critical trajectory does not  intersect the
imaginary axis in $iy_2$.
Therefore it intersects the imaginary axis in $iy_1$. The trajectory through $iy_2$ then tends to
infinity and it has to stay below the critical trajectories that emanate from $z_1$ and $z_2$.
It means that it tends to infinity at angles $-\pi/3$ and $-2\pi/3$. The unbounded critical trajectories
tend to infinity at the other angles $0$, $\pi/3$, $2\pi/3$ and $\pi$ and we have the situation
as sketched in Figure \ref{figure:subcrit_dif}.

At the simple zeros $z_1$ and $z_2$ the critical trajectory $\gamma$ extends analytically to
vertical trajectories $\Gamma_1$ and $\Gamma_2$, that is, to trajectories of the quadratic 
differential $Q(z) dz^2$. On $\Gamma_1$ and $\Gamma_2$ we have that $\Re D(z)$ is constant
and $\Im D$ is monotonically increasing or monotonically decreasing. Then $\Gamma_1$ and $\Gamma_2$
intersect with the level lines $\Im D = 0$ only at $z_1$ and $z_2$.  
From the global picture in Figure \ref{figure:subcrit_dif} it is then clear
that these orthogonal trajectories end at angles $\pi/6$ and $5 \pi/6$.
It means that 
\[ \Gamma = \Gamma_1 \cup \gamma \cup \Gamma_2 \]
is an analytic contour in $\mathcal T_{2,1}$ in case $K < K^*$. This proves part (a).

In the limit when $K \nearrow K^*$, we have that $y_1 \to -a$ and $y_2 \to -a$.
Then four trajectories are emanating from $z_0 = -ia$. One that is connected with
$z_1$, one with $z_2$, and two unbounded ones that end at angles $-\pi/3$ and $-2\pi/3$. 
The unbounded critical trajectories that emanate from $z_1$ and $z_2$ still end at
the other remaining directions at infinity, and then we can argue as in the case $K < K^*$
to conclude that $\Gamma$ is an extension in $\mathcal T_{2,1}$,
and part (b) of the lemma follows.
\end{proof}

\begin{figure}[!t]
\centering
\begin{overpic}[scale=.35,unit=1mm]%
     {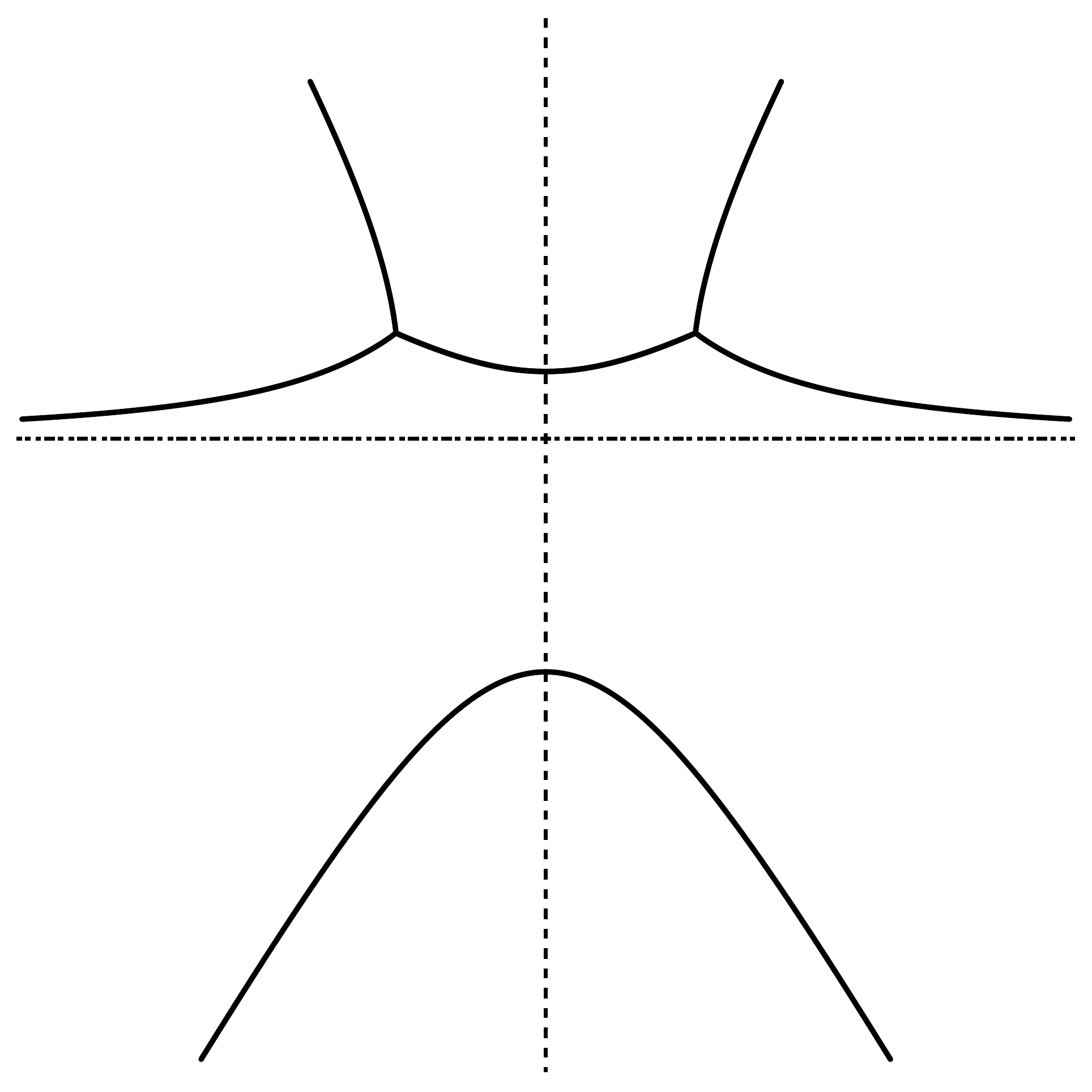}
\put(28,70){ \footnotesize $z_1$}
\put(65,70){ \footnotesize $z_2$}
\put(50.5,68){\footnotesize  $iy_1$}
\put(50.5,40){\footnotesize  $iy_2$}
  \end{overpic}
\caption{The configuration of zero level curves for $\Im D$, for subcritical $K<K^*.$\label{figure:subcrit_dif} }
\end{figure}

\subsection{Proof of Theorem \ref{Thm_single_cut_V1} (a) and (b)}

Now we can complete the proof of Theorem \ref{Thm_single_cut_V1}.

\begin{proof}
Let $K \leq K^*$. Then by part (a) of Lemma \ref{LQ1_candidate} there is
a critical trajectory $\gamma$ 
(or a union $\gamma = \gamma_1 \cup \gamma_2$ of two critical trajectories in case $K = K^*$) 
of $-Q(z)dz^2$ that connects $z_1$ and $z_2$.

By Lemma \ref{extension} this trajectory (or union of two trajectories) extends to an unbounded contour
\[ \Gamma = \Gamma_1 \cup \gamma \cup \gamma_2 \]
in the class $\mathcal T_{2,1}$, where $\Gamma_1$ and $\Gamma_2$ are unbounded orthogonal trajectories.
It remains to show that $\Gamma$ has the $S$-property in the external field $\Re V$
and $\supp(\mu_{\Gamma}) = \gamma$, since then by Theorem \ref{theoremGR} 
the zeros of $P_n$ accumulate on $\gamma$ as $n \to \infty$.

The proof that $\Gamma$ has the $S$-property was done for $K = 0$ in \cite[Theorem 2.2]{DHK}.
There it was also shown that
\[ d\mu_{\Gamma}(s) = \frac{1}{\pi i} Q^{1/2}_+(s) ds, \qquad s \in \gamma \]
and $\supp(\mu_{\Gamma}) = \gamma$. The same proof works for general $K \leq K^*$. 
This completes the proof of Theorem \ref{Thm_single_cut_V1}.
\end{proof}

\section{Proof of Theorem \ref{Thm_single_cut_V2}\label{sect_deg5}}

\subsection{Introduction}

We are now considering the quintic case
\[
 V(z) = -\frac{i z^5}{5}.
\]
As mentioned in Section \ref{sect_intro}, for this potential there are two possible combinations 
of sectors that respect the symmetry with respect to the imaginary axis. They correspond 
to two sets of curves, $\mathcal T_{3,1}$ and $\mathcal T_{4,5}$. Example contours were illustrated 
in Figure \ref{Fig_admitted_sect}(b) and (c).

The reasoning for the quintic case is similar to that of the cubic case in \S\ref{sect_deg3}, 
though somewhat more involved. We start by determining two possible candidates for $Q$, labelled 
$Q_1$ and $Q_2$. We intend to show that they correspond precisely to the two cases illustrated 
in Figure \ref{Fig_admitted_sect}. To that end, for each polynomial $Q_p$, $p = 1,2$, we show that 
there is a critical trajectory $\gamma_p$ connecting its simple zeros. Next, we show the existence 
of analytic extensions $\Gamma_p$ for both curves that tend to infinity in the right sectors. The 
proof is finalized by showing that the resulting global curves have the $S$-property in the
external field $\Re V$.

\subsection{Candidates for the polynomial $Q$}

In order to be in the one cut case, the polynomial $Q$ should satisfy the following conditions:
\begin{itemize}
\item $\deg Q=8$.
\item $Q$ is symmetric with respect to the imaginary axis: $Q(z)=\overline{Q(-\bar{z})}$.
\item The asymptotic behavior of $Q$ at infinity is as specified in \eqref{Qasympt}, which implies that $Q$ is of the form:
\begin{equation}
Q(z)=-\frac{1}{4}z^8+iz^3+\mathcal{O}(z^2).\label{Q_expansion}
\end{equation}
\item $Q$ has exactly two simple zeros $z_1$ and $z_2$ and three double zeros $z_0,z_3$ and $z_4$, i.e.,
\begin{equation}
 Q(z)=-\frac{1}{4}(z-z_1)(z-z_2)(z-z_0)^2(z-z_3)^2(z-z_4)^2.\label{Q_sym}
\end{equation}
\end{itemize}
The symmetry implies that the zeros of $Q$ are on the imaginary axis or appear in pairs: $(z,-\bar{z})$. 
Thus, at least one of the double zeros has to be on the imaginary axis. We assume that there is exactly 
one and we denote it by $z_0$. We also assume that the single zeros appear as a pair, rather than as 
two separate zeros on the imaginary axis. With these assumptions, we can describe the symmetry of all 
zeros using five real parameters $a,b,c,d$ and $e$, with $b$ and $d$ positive:
\begin{equation}\label{Q_zeros}
z_{0}=-ai, \quad z_{1}=-b+ci, \quad z_{2}=b+ci, \quad z_{3}=-d+ei, \quad z_{4}=d+ei,
\end{equation}
It can be verified with computations similar to the following that other assumptions do not lead to 
a solution for the polynomial $Q$.

We proceed by determining all polynomials $Q$ that satisfy these conditions. As in the cubic case, 
we find the parameters $a,b,c,d,e$ as the solution of a set of equations. The equations are non-linear 
in this case, but they still consist of polynomials in the parameters. This admits the use of a concept
 from linear algebra, the resultant, which leads to analytic expressions for the parameters. 

\begin{Thm}\label{Res_silvester}
Suppose that $f(z) = a_0z^m+\cdots +a_m$ and $g(z) = b_0z^n+\cdots +b_n$ are two complex polynomials 
so that $a_0 \neq 0 \neq  b_0$. Then the following two properties are equivalent:
\begin{enumerate}
\item $f$ and $g$ share a common root.
\item $R_{f,g} = 0$, where
\begin{equation}
R_{f,g}=\det \begin{pmatrix}a_0&a_1&\cdots &a_m& 0&0&\cdots &0\\
0&a_0&\cdots &&a_m&0&\cdots &0\\
0&0&a_0&\cdots &&a_m&\cdots &0\\
& & & \ddots &&&\ddots &\\
& & & &a_0 &&&a_m\\
b_0&b_1&&\cdots &&b_n& \cdots &0\\
&\ddots &&&&&\ddots &\\
0&&b_0&\cdots &&&\cdots &b_n\\
\end{pmatrix}.
\end{equation}
\end{enumerate}
\end{Thm}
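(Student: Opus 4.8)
This is the classical theorem on the resultant, and the plan is to prove it by reducing ``$f$ and $g$ have a common root'' to the solvability of a homogeneous linear system whose coefficient matrix is the Sylvester matrix. Concretely I would show the equivalence of three statements: (i) $f$ and $g$ have a common root in $\mathbb{C}$; (ii) there exist polynomials $u,v$, not both identically zero, with $\deg u\le n-1$ and $\deg v\le m-1$, such that $uf+vg=0$; (iii) $R_{f,g}=0$. The theorem is then the equivalence (i) $\Leftrightarrow$ (iii), obtained by chaining (i) $\Leftrightarrow$ (ii) with (ii) $\Leftrightarrow$ (iii).

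For (i) $\Leftrightarrow$ (ii) I would work in the Euclidean domain $\mathbb{C}[z]$. Since $\mathbb{C}$ is algebraically closed, $f$ and $g$ have a common root precisely when they have a common divisor $h$ with $\deg h\ge 1$. If such an $h$ exists, write $f=hf_1$ and $g=hg_1$; then $g_1 f-f_1 g=0$, and because $a_0\neq 0\neq b_0$ the cofactors satisfy $\deg f_1=m-\deg h\le m-1$ and $\deg g_1=n-\deg h\le n-1$, with $g_1\neq 0$ since $f\neq 0$, so $(u,v)=(g_1,-f_1)$ witnesses (ii). Conversely, if $uf+vg=0$ with, say, $u\neq 0$ and $\deg u\le n-1$, and if $\gcd(f,g)$ were constant, then $g\mid uf$ together with coprimality would force $g\mid u$, which is impossible as $0\le\deg u\le n-1<n=\deg g$; hence $\gcd(f,g)$ is non-constant and $f,g$ share a root.

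For (ii) $\Leftrightarrow$ (iii) I would write $u=\sum_{k=0}^{n-1}u_k z^{n-1-k}$ and $v=\sum_{k=0}^{m-1}v_k z^{m-1-k}$, expand $uf+vg$, and collect the coefficients of $z^{m+n-1},\dots,z^{1},z^{0}$. This yields a system of $m+n$ homogeneous linear equations in the $m+n$ unknowns $u_0,\dots,u_{n-1},v_0,\dots,v_{m-1}$ whose coefficient matrix is exactly the transpose of the matrix displayed in the statement. Statement (ii) is precisely the assertion that this system has a non-trivial solution, which holds if and only if the determinant of the coefficient matrix vanishes; since a matrix and its transpose have equal determinants, this is $R_{f,g}=0$.

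The argument is entirely routine; the only points needing a little care are the degree bookkeeping — this is exactly where the hypothesis $a_0\neq 0\neq b_0$ enters, since it guarantees $\deg f=m$ and $\deg g=n$, so that $f_1,g_1$ have the claimed degrees and the Sylvester matrix has the displayed leading pattern — and the verification that writing out $uf+vg$ term by term reproduces (the transpose of) the displayed matrix. One could alternatively quote the product formula $R_{f,g}=a_0^{n}\prod_j g(\alpha_j)$ over the roots $\alpha_j$ of $f$, from which (i) $\Leftrightarrow$ (iii) is immediate, but proving that identity requires essentially the same linear-algebra computation, so the direct route above is the cleaner one to present.
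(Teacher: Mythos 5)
Your proof is correct; the paper itself gives no argument for this statement, deferring entirely to Theorem 4.1 of \cite{BS}, and your reduction of ``common root'' to the existence of $u,v$ with $uf+vg=0$ and bounded degrees, hence to a nontrivial null vector of the Sylvester matrix, is precisely the standard argument contained in that reference. (One immaterial slip: $g_1\neq 0$ follows from $g\neq 0$ rather than $f\neq 0$, but since both leading coefficients are nonzero either cofactor works.)
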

\begin{proof} 
See e.g.\ Theorem 4.1 of \cite{BS}.
\end{proof}

\begin{Lem}\label{LQ2}
There are two polynomials of degree $8$ that satisfy conditions \eqref{Q_expansion} and \eqref{Q_sym} 
with zeros of the form \eqref{Q_zeros}. 
Their zeros are specified by the parameters $\{a_p,b_p,c_p,d_p,e_p\}$, with values for $p=1$:

\begin{align*}
a_1&=\frac{1}{3}\left(-A_1-\frac{4+\sqrt{30}}{A_1}+1\right) \left(-\frac{27}{14}+5\frac{\sqrt{30}}{14}\right)^{1/5}\approx -1.1082,\\
b_1&=\frac{1}{42}\sqrt{36+6\sqrt{30}}B_1 \approx 1.3489, \\
c_1&=\left(-\frac{27}{14}+5\frac{\sqrt{30}}{14}\right)^{1/5} \approx 0.4877, \\
d_1&=\frac{B_1}{28\sqrt{3}}\sqrt{A_1^2+(8+\sqrt{30})+\frac{\sqrt{1410\sqrt{30}+7740}(4-\sqrt{30})}{14}A_1-2(4+\sqrt{30})}\approx 0.6781,\\
e_1&=\frac{1}{6}\left(-A_1-\frac{4+\sqrt{30}}{A_1}-2\right) \left(-\frac{27}{14}+5\frac{\sqrt{30}}{14}\right)^{1/5}\approx -0.7979,
\end{align*}
where the constants $A_1$ and $B_1$ are given by
\[
 A_1=\left(12\sqrt{30}+62+\sqrt{1410\sqrt{30}+7740}\right)^{\frac{1}{3}}\quad\mbox{and} \quad B_1=\left(-1037232+192080\sqrt{30}\right)^{\frac{1}{5}}.
\]
For $p=2$ the values are
\begin{align*}
a_2&=-\frac{1}{3}\left(-A_2+\frac{4-\sqrt{30}}{A_2}+1\right)\left(\frac{27}{14}+5\frac{\sqrt{30}}{14}\right)^{1/5} \approx -0.9820, \\
b_2&=\frac{1}{42}\sqrt{36-6\sqrt{30}}B_2  \approx  0.7744,\\
c_2&=-\left(\frac{27}{14}+5\frac{\sqrt{30}}{14}\right)^{1/5}\approx -1.3118\\
d_2&=\frac{B_2}{28\sqrt{3}}\sqrt{A_2^2+(8-\sqrt{30})+\frac{\sqrt{-1410\sqrt{30}+7740}(4+\sqrt{30})}{14}A_2-2(4-\sqrt{30})}\approx  1.0344.\\
e_2&=-\frac{1}{6}\left(-A_2+\frac{4-\sqrt{30}}{A_2}-2\right)\left(\frac{27}{14}+5\frac{\sqrt{30}}{14}\right)^{1/5}\approx  0.1649,
\end{align*}
with the constants $A_2$ and $B_2$ given by
\[
A_2=\left(-12\sqrt{30}+62+\sqrt{-1410\sqrt{30}+7740}\right)^{\frac{1}{3}} \quad \mbox{and} \quad B_2=\left(1037232+192080\sqrt{30}\right)^{\frac{1}{5}}.
\]
\end{Lem}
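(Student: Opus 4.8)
The statement to be proved is Lemma~\ref{LQ2}: the polynomial $Q$ of degree $8$ that is symmetric in the imaginary axis, has the prescribed behaviour \eqref{Q_expansion} at infinity, and factors as in \eqref{Q_sym} with zeros of the form \eqref{Q_zeros}, exists in exactly two instances, whose parameters are the ones listed. The plan is to set up the algebraic system exactly as was done in the cubic case (Lemma~\ref{LQ1}), namely by expanding \eqref{Q_sym} with the zeros \eqref{Q_zeros} and matching the coefficients against \eqref{Q_expansion}. Because of the symmetry $Q(z)=\overline{Q(-\bar z)}$ all coefficients of $Q$ are automatically real once written in the variable $z^2$ plus a purely imaginary contribution at $z^3$, so the matching of powers $z^7, z^6, \dots, z^0$ yields a finite system of real polynomial equations in the five real unknowns $a,b,c,d,e$ (the constant $C$-type coefficients being determined last, as in \eqref{C_vs_b}). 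The count works out: degree-$8$ monic-up-to-$-\tfrac14$ polynomial has $8$ free coefficients, two of which ($z^8$ and $z^3$) are pinned by \eqref{Q_expansion}, the $z^7,z^6,z^5,z^4$ coefficients give $4$ equations, and then $z^2,z^1,z^0$ are free — but the symmetry kills the odd ones except $z^3$, so one is left with just enough equations to expect a zero-dimensional solution set.

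\textbf{Key steps.} First I would write the four independent relations coming from the vanishing (or prescribed value) of the coefficients of $z^7$, $z^6$, $z^5$, $z^4$ in $-\tfrac14\prod(z-z_i)$, using \eqref{Q_zeros}; the $z^7$-coefficient relation is linear and immediately gives a relation among $a,c,e$ of the type $a = 2(c+e)$ or similar (the analogue of $a=c$ in the cubic case), which I would use to eliminate one variable. Second, I would use the $z^5$-coefficient relation, which reads off the coefficient of $z^3$ in $Q$ and must equal $i$ (after dividing by $-\tfrac14$); this is the analogue of $ab^2=2$ and pins down a second combination. Third, the two remaining relations (from $z^6$ and $z^4$) then form a system of two polynomial equations in two remaining unknowns — say $b^2$ and $d^2$, or $c$ and $e$. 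Here is where the resultant (Theorem~\ref{Res_silvester}) enters: eliminating one variable via the Sylvester resultant produces a single univariate polynomial whose roots are exactly the admissible values of the remaining parameter. I would compute (with \texttt{Maple}) that this resultant factors, and that it has precisely two real roots compatible with the sign constraints $b,d>0$ (and with $Q$ genuinely having simple zeros at $z_1,z_2$ and double zeros elsewhere, not a degenerate coincidence). Back-substitution through the eliminations then yields the closed forms for $\{a_p,b_p,c_p,d_p,e_p\}$, $p=1,2$, and numerical evaluation confirms they match the stated approximate values. Finally I would note, as the footnote after \eqref{Q_zeros} already asserts, that the other symmetry ansätze (two simple zeros on the axis, or two double zeros on the axis) lead to an inconsistent system — this is checked by running the same elimination with the alternative parametrization and finding no admissible real solution.

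\textbf{Main obstacle.} The conceptual content is light — it is the same coefficient-matching idea as Lemma~\ref{LQ1} — so the real work, and the main obstacle, is the algebra: the elimination produces polynomials of fairly high degree with nested radicals, and one must (i) carry out the resultant computation cleanly enough to exhibit the factorization, (ii) correctly identify which roots of the resultant satisfy all sign and non-degeneracy constraints so that exactly two candidates survive, and (iii) simplify the back-substituted expressions into the closed forms involving $A_p,B_p$ and $\sqrt{30}$ as displayed. Verifying that the messy radical expressions genuinely solve the original system is best done by direct substitution in \texttt{Maple}; the appearance of $\sqrt{30}$ and the particular quartic/quintic radicals $A_p$, $B_p$ strongly suggests the resultant in the last elimination step is a quadratic in $\sqrt{30}$-type data, which is why precisely two solutions emerge. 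I do not expect any subtlety beyond bookkeeping — but the bookkeeping is genuinely heavy, so I would present the derivation in outline and relegate the explicit symbolic computation to \texttt{Maple}, exactly as the paper does elsewhere.
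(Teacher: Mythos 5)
Your proposal follows essentially the same route as the paper: match the coefficients of $z^7$ down to $z^3$ against \eqref{Q_expansion}, use the linear $z^7$-relation to eliminate $e$ (the paper gets $e=\tfrac12(a-c)$), solve two of the remaining equations for $b^2$ and $d^2$ in terms of $a,c$, reduce to a two-equation system in $a,c$, and apply the Sylvester resultant of Theorem \ref{Res_silvester} to obtain a univariate equation (here $28c^{10}+108c^5-3=0$, quadratic in $c^5$, hence exactly two real solutions) before back-substituting and checking the alternative zero configurations fail. The only slips are bookkeeping ones you already hedge on — the inhomogeneous analogue of $ab^2=2$ comes from the $z^3$ coefficient (the $iz^3$ term), not $z^5$, and the final two unknowns are $a,c$ rather than $b^2,d^2$ — neither of which affects the validity of the argument.
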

\begin{proof}
The proof of the lemma consists of three steps. First, we obtain a system of equations that has to be 
satisfied by the parameters $a,b,c,d,e$ by matching the parameterized polynomial \eqref{Q_sym} with the 
asymptotic formula \eqref{Qasympt}. Next, the resulting nonlinear system is simplified to just two equations 
in two unknowns by eliminating parameters. Finally, Theorem \ref{Res_silvester} is applied to find a third 
equation, from which the two families of parameters can be explicitly found.

Thus, we start by matching the coefficients of a polynomial of the form \eqref{Q_sym}, paramaterized by 
\eqref{Q_zeros}, to the coefficients of the asymptotic formula \eqref{Qasympt}. The leading order coefficient 
is matched by construction. For degree $7$, the equation is rather simple:
\[
\frac{i}{2}(-a+c+2e)=0,
\]
from which we find $e$ as
\begin{equation}\label{eq_e}
e=\frac{1}{2}(a-c).
\end{equation}
Taking this expression into account, matching the coefficients of degree $6$ down to $3$ yields the system of 
equations:
\begin{equation}\label{sys1}
\left\{ \begin{array}{l}
-3a^2+2ac+2b^2-3c^2+4d^2=0,\\
-ac^2+a^2c+4b^2c+4ad^2-4cd^2+a^3-c^3=0,\\
-16ab^2c+16acd^2-38a^2c^2+20a^3c+24a^2b^2+20ac^3 \\
\qquad +24a^2d^2-9a^4-9c^4-16d^4-32b^2d^2-24b^2c^2+24c^2d^2=0, \\
-32+2a^2c^3+3a^5+20a^2b^2c-24a^2cd^2+a^4c-16b^2cd^2+8a^3d^2+16cd^4 \\
\qquad -3c^5-8c^3d^2-4a^3b^2-16ad^4-ac^4-4b^2c^3-2a^3c^2 \\
\qquad -12ab^2c^2+24ac^2d^2-16ab^2d^2=0.
        \end{array}
\right.
\end{equation}

Next, note that $b$ and $d$ only appear squared in these expressions. The unknowns $b^2$ and $d^2$ are 
found in terms of $a$ and $c$ from the first two equations of system \eqref{sys1}:
\begin{equation}\label{eq_bdac}
\left\{ \begin{array}{rl}
          b^2&=\frac{2(c-a)(a^2+c^2)}{3c-a} \\
          d^2&=\frac{7a^2c-5ac^2+a^3+5c^3}{4(3c-a)}
        \end{array}
        \right.
\end{equation}
This reduces the system \eqref{sys1} to
\begin{equation}\label{sys2}
\left\{ \begin{array}{l}
-15c^6+3a^6+36a^3c^3-3a^4c^2-6a^5c-37a^2c^4+30ac^5=0\\
-9c^2+6ac-a^2+3a^6c-3c^7+2ac^6+7a^4c^3-6a^5c^2-4a^3c^4+a^2c^5=0
        \end{array}
\right.
\end{equation}

Finally, we view \eqref{sys2} as two polynomials in $a$, with coefficients depending on $c$. By 
Theorem \ref{Res_silvester}, these polynomials have a common root if and only if the resultant vanishes:
\begin{equation}
c^{12}(28c^{10}+108c^5-3)^3=0.
\end{equation}
If $c=0$ then \eqref{sys2} yields also $a=0$, but \eqref{sys1} becomes inconsistent. We conclude that 
\begin{equation}
28c^{10}+108c^5-3=0.
\end{equation}
This is a quadratic equation in $c^5$ and we find two real solutions
\[
c_1=\left(-\frac{27}{14}+\frac{5}{14}\sqrt{30}\right)^{1/5} \qquad \text{and} 
	\qquad c_2=-\left(\frac{27}{14}+\frac{5}{14}\sqrt{30}\right)^{1/5}.
\]
For these particular values of $c$, the two equations in \eqref{sys2} have a common polynomial 
factor. For $c_1$, this factor is
\begin{equation}
3a^3-3a^2c_1-(3c_1^2+\sqrt{30}c_1^2)a+3c_1^3\sqrt{30}+15c_1^3=0,\label{eq_ac1}
\end{equation}
while for $c_2$ it is
\begin{equation}
3a^3-3a^2c_2-(3c_1^2-\sqrt{30}c_1^2)a-3c_2^3\sqrt{30}+15c_2^3=0. \label{eq_ac2}
\end{equation}
It can be verified that the other cubic polynomials in the factorization of both 
equations of \eqref{sys2} do not share common roots. 
We find the values $a_1$ and $a_2$ as the real roots of the cubic equations \eqref{eq_ac1} and 
\eqref{eq_ac2}. The value of $b$ follows from \eqref{eq_bdac}, while the given value for $d$ 
follows from the original system \eqref{sys1}. 
Finally the value of $e$ follows from \eqref{eq_e}.
\end{proof}

\subsection{Single arc trajectory for the quadratic differential $-Q_p(z)dz^2$}

According to Lemma \ref{LQ2} there are two possible candidates for the polynomial $Q$.
We label these as $Q_1$ and $Q_2$ depending on the values of 
the parameters $a, b, c, d, e$ in \eqref{Q_zeros}.
We show that both $Q_1$ and $Q_2$ give rise to a single arc critical
 trajectory that connects the two simple zeros. These critical trajectories are denoted by $\gamma_p$, 
where $p$ refers to the considered polynomial $Q_p$.
\begin{Lem} For both polynomials $Q_p(z)$, $p=1,2$, the quadratic differential $-Q_p(z)dz^2$ has a critical 
trajectory that connects the two simple zeros $z_1$ and $z_2$ of $Q_p(z).$\label{V2_OneCut}
\end{Lem}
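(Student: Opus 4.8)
The plan is to mimic the structure of the cubic proof (Lemma \ref{LQ1_candidate}), treating each $p=1,2$ in turn by tracking the zero level set of the function
\[
D_p(z) = \frac{1}{\pi i}\int_{z_1}^z Q_p^{1/2}(s)\,ds,
\]
where the branch of $Q_p^{1/2}$ is chosen so that $Q_p^{1/2}(s)\sim -\tfrac12 s^4$ at infinity, with a branch cut running from $z_1$ to $z_2$ crossing the imaginary axis once. Since the three critical trajectories emanating from the simple zero $z_1$ are exactly the arcs on which $\Im D_p(z)=0$ (by \eqref{horizontal} and Lemma \ref{m_traject}), and since $-Q_p(z)dz^2$ is symmetric in the imaginary axis, a natural first step is to study the real-valued function $F_p(y) = \Im D_p(iy)$ for $y\in\mathbb R$: if $F_p$ has a zero then a critical trajectory from $z_1$ reaches the imaginary axis, and by symmetry its reflection is the critical trajectory from $z_2$, so the two must join into a single arc $\gamma_p$ connecting $z_1$ and $z_2$ (one has to rule out the alternative that the trajectory through the imaginary-axis point is a different, unbounded trajectory, which is handled exactly as in Lemma \ref{LQ1_candidate} by a counting argument at infinity together with Lemma \ref{differentD}).

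Next I would make the integral computable. Writing $Q_p(z) = -\tfrac14 (z-z_1)(z-z_2)(z-z_0)^2(z-z_3)^2(z-z_4)^2$ and using $z_{1,2}=\pm b+ci$, $z_0=-ai$, $z_{3,4}=\pm d+ei$, the square root factors as
\[
Q_p^{1/2}(s) = -\tfrac{1}{2}(s-z_0)(s-z_3)(s-z_4)\,\bigl[(s-z_1)(s-z_2)\bigr]^{1/2},
\]
so $Q_p^{1/2}$ is (a polynomial of degree $3$) times $\sqrt{(s-ci)^2-b^2}$. On the imaginary axis, substituting $s=u+ci$ turns $[(s-z_1)(s-z_2)]^{1/2}$ into $\pm i\sqrt{b^2-u^2}$ (or $\sqrt{u^2-b^2}$ off the cut), and the whole integrand becomes an explicit elementary function; $F_p'(y)$ will then be a rational multiple of $\sqrt{(y-c)^2+b^2}$ times a polynomial in $y$, and the key hope — paralleling the cubic case where $F'(y)=-\tfrac{1}{2\pi}(y+a)\sqrt{(y-a)^2+b^2}$ — is that this polynomial factors nicely, ideally forcing $F_p$ to be unimodal (strictly increasing then strictly decreasing), so that the sign of the single maximum $\max_y F_p(y)$ decides everything. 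I expect this is where the concrete numerical values of $\{a_p,b_p,c_p,d_p,e_p\}$ from Lemma \ref{LQ2} enter: one plugs them in, checks $F_p'$ has the right sign pattern, and checks that the maximum value of $F_p$ is strictly positive (e.g.\ by evaluating $F_p$ at the critical point and showing it is $>0$, either in closed form or via the Maple computation the paper already relies on).

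The main obstacle, as I see it, is twofold. First, unlike the cubic case there is no free real parameter $K$ to sweep; the two candidates $Q_1,Q_2$ are rigidly determined, so one cannot argue by continuity from a solved case and must instead directly verify $\max_y F_p(y)>0$ for each $p$ — and the messy surd expressions for the parameters make a fully symbolic verification unpleasant, so this step likely leans on numerical evaluation. Second, I would need to confirm that $F_p$ is genuinely unimodal: $F_p'(y)$ is $\sqrt{(y-c)^2+b^2}$ times a degree-three polynomial in $y$ (the extra degree compared to the cubic case coming from the $(s-z_0)(s-z_3)(s-z_4)$ factor), and I need exactly one sign change on $\mathbb R$ to the right of the branch cut, with $F_p\to-\infty$ at $+\infty$; establishing that the cubic factor has the right real-root structure for the specific numerical parameters is the crux, and if it fails to be unimodal one would instead have to track all the zero level lines globally and invoke Lemma \ref{differentD} and the Teichmüller/counting arguments to still pin down a connecting trajectory. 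Once $\gamma_p$ is produced for both $p$, the lemma is proved; the analytic extension to a contour in $\mathcal T_{3,1}$ respectively $\mathcal T_{4,5}$ and the $S$-property are deferred to the subsequent subsections.
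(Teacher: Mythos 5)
Your strategy of reducing everything to the sign of $F_p(y)=\Im D_p(iy)$ on the imaginary axis is not the paper's route, and it contains a genuine gap at the decisive step. The obstacle you worry about first (unimodality) is actually not the problem: on the imaginary axis one finds
$F_p'(y)=\pm\frac{1}{2\pi}\sqrt{(y-c)^2+b^2}\,(y+a)\,\bigl((y-e)^2+d^2\bigr)$,
so the degree-three factor has exactly one real root and $F_p$ is automatically unimodal, just as in the cubic case. The real difficulty is the step you defer to ``the counting argument at infinity together with Lemma~\ref{differentD}''. That argument works for the cubic potential only because there $\deg Q=4$, there are $2d=6$ admissible directions at infinity, and the six ends of the critical trajectories emanating from $z_1$ and $z_2$ exhaust all of them, so an additional non-critical level line of $\Im D=0$ crossing the imaginary axis has nowhere to go. For the quintic, $\deg Q_p=8$ and there are $2d=10$ directions at infinity (Lemma~\ref{lem:angles}), while $z_1$ and $z_2$ still contribute at most six critical-trajectory ends; two further non-critical trajectories on the level set $\Im D=0$, each with two ends, fit into the remaining four directions without violating Lemma~\ref{differentD}. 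Moreover the three double zeros $z_0,z_3,z_4$ supply extra critical trajectories and may absorb ends of the trajectories from $z_1$ (the paper's own extension lemma must consider the case where one of them ends at $z_3$). So knowing that $F_p$ vanishes somewhere does not, by the cubic-style count, force the level line through that point to be a critical trajectory from $z_1$: as written, the proposal does not establish the lemma.

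The paper closes this gap by an entirely different, local trapping argument: it computes from \eqref{dir_crit1} that exactly one critical trajectory $\gamma_p$ leaves $z_1$ into a triangle whose sides are the horizontal segment $[z_1,z_2]$ and two segments tilted at $45^\circ$, shows $\Im D>0$ on the open horizontal segment (by integrating the explicit expression \eqref{ImDprime0} and using the numerical parameter values from Lemma~\ref{LQ2}), and shows $\Im D<0$ on the tilted side adjacent to $z_1$ (via the sign of $\Re Q_p$ there, which pins down $\arg D'$ to a quarter-plane and hence the sign of $\Im D$ after integration). Since $\Im D\equiv 0$ on $\gamma_p$, it can cross neither side, must reach the imaginary axis, and by symmetry joins $z_2$. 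To salvage your approach you would need an argument of this trapping type, or a full Teichm\"uller-style classification of all level lines of $\Im D_p$, to identify the imaginary-axis crossing as lying on the critical trajectory from $z_1$.
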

\begin{proof}

The proof is similar for both cases $p=1,2$ and we will give the details only for $p=1$. In both cases, 
the idea is to select a particular critical trajectory $\gamma_p$ emanating from $z_1$ and to prove that it 
stays inside a triangular shaped area. The triangle consists of the horizontal line segment 
connecting $z_1$ and $z_2$ and two titled line segments at angles $\pm \frac{\pi}{4}$. 
One critical trajectory emanating from $z_1$ enters into the triangle and we show that it cannot leave the triangle
through the two sides of the triangle adjacent to $z_1$. Then it has to come to the
imaginary axis, and then by symmetry, it will connect $z_1$ to $z_2$. 
The setup is shown for $p=1$ in Figure \ref{lock_triangle}. 
For the case $p=2$, the slope in the left half-plane is 
positive ($+1$) and the triangle points upwards.

\begin{figure}
\centering
\begin{overpic}
[scale=0.42,unit=1mm]%
{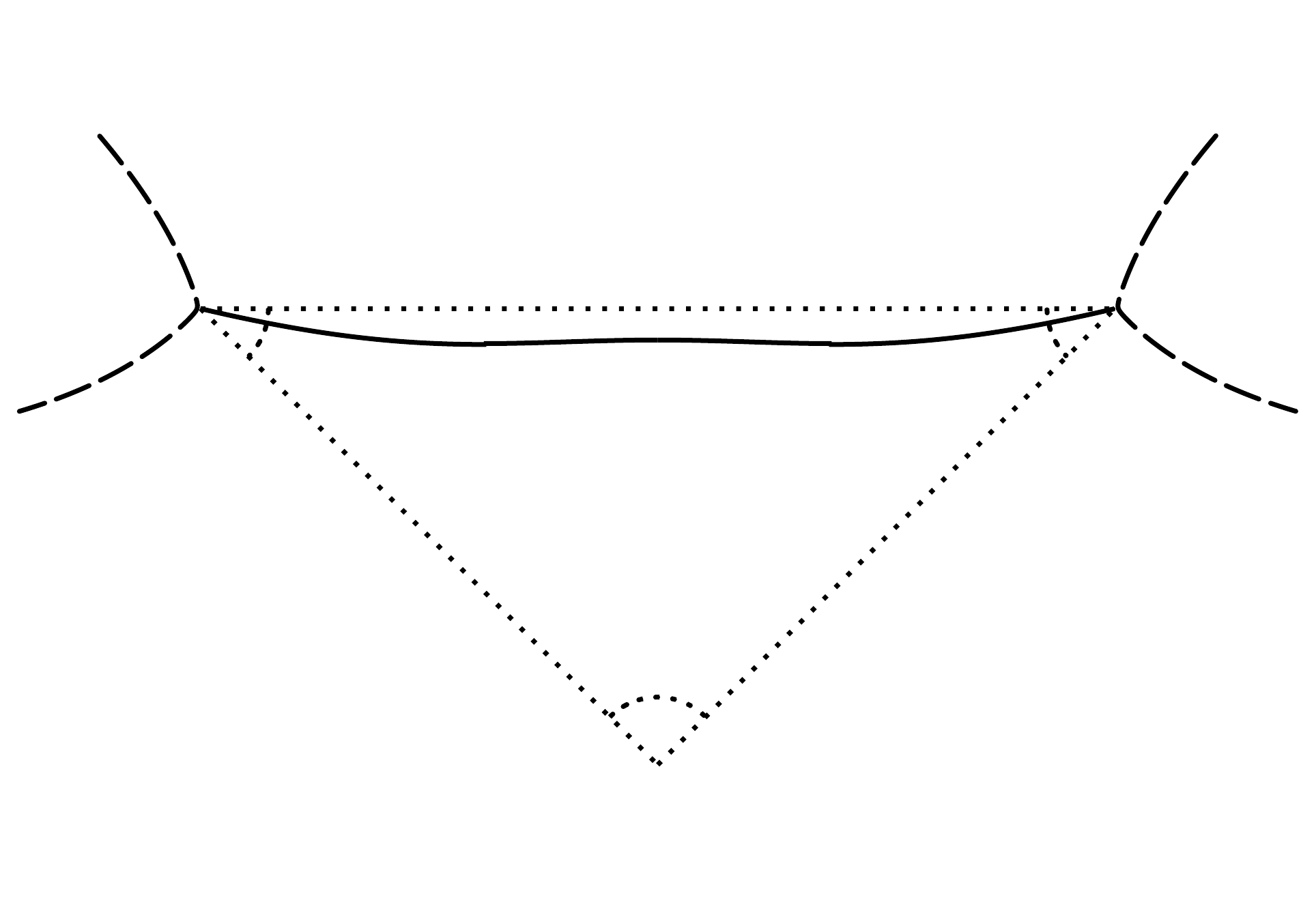}
\put(4,45){\color{black}{$z_1$}}
\put(88,45){\color{black}{$z_2$}}
\put(21,38){\scriptsize{$45^\circ$}}
\put(70,38){\scriptsize{$45^\circ$}}
\put(45,17){\scriptsize{$90^\circ$}}
\put(50,39){\scriptsize{$\gamma_1$}}
\end{overpic}
\caption{The selected critical trajectory $\gamma_1$ (solid) is locked inside a triangle 
(bounded by dotted lines) in the case $p=1$. \label{lock_triangle}}
\end{figure}

First, we show that precisely one critical trajectory emanating from $z_1$ enters into this 
triangular region. From \eqref{dir_crit1}, the angles under which the trajectories leave 
$z_1$ can be calculated exactly. For the case $p=1$ they are, with $k=-0,1,2$ (taken modulo $3$):
\begin{align}\label{angles_above}
	-\frac{1}{3}\arctan\left(\frac{4(\sqrt{30}+3)}{4\sqrt{6}+\sqrt{5}}\right)  + 
	k \frac{2\pi}{3} \quad \textrm{mod} 2\pi \approx
		\begin{cases} -0.1305\pi & \text{ for } k =0, \\
		 0.5362\pi, & \text{ for } k =1, \\
		 -0.7971\pi\, & \text{ for } k = -1.
		\end{cases}
\end{align}
The contour that enters into the triangle corresponds to the choice $k=0$, and this
trajectory is denoted by $\gamma_1$.

We prove that $\gamma_1$ does not cross the sides of the triangle adjacent to $z_1$. 
We recall that $\Im D = 0$ on $\gamma_1$, where 
\begin{equation} \label{Dzfromz1} 
	D(z) = \frac{1}{\pi i} \int_{z_1}^z Q(s)^{1/2} ds, 
	\end{equation}
see \eqref{eq:D}, where we can take any  choice of an analytic square root of $Q$
provided it is continuous along $\gamma_1$.

We have
\begin{equation} \label{Dprime} 
	D'(z) = \frac{1}{\pi i} Q(z)^{1/2}
	= \frac{1}{2\pi} \left[(z-z_1)(z-z_2) \right]^{1/2} (z-z_0)(z-z_3)(z-z_4) 
	\end{equation}
with the $z_j$ as in \eqref{Q_zeros}. Using  $e = \frac{1}{2}(a-c)$ as in \eqref{eq_e}, we then get
\begin{align*} 
	D'(x+ic) & = \frac{1}{2\pi} \sqrt{x^2-b^2} (x+i(c+a))((x+i(c-e))^2-d^2) 
\end{align*}
which implies that for $-b < x < b$,
\begin{align} \label{ImDprime0} 
	\Im D'(x+ic) & = \frac{1}{2\pi} \sqrt{b^2-x^2} 
		\left(x^3  - ( \tfrac{21}{4} c^2 + \tfrac{1}{2} ac - \tfrac{3}{4} a^2 + d^2)x \right).
\end{align}

Now we take the values $a=a_1$, $b=b_1$, etc., given in Lemma \ref{LQ2}.
Then
\[ \Im D'(x+ic_1) \approx \frac{1}{2\pi} \sqrt{b_1^2-x^2}
	\left(x^3 - 0.5171 x \right),   \qquad - b_1 < x < b_1 \]
which has  zeros at $x=0$ and at $x= \pm x^*$ with $x^* \approx 0.72$.
Since $x^* < b_1 \approx 1.34$, these three zeros are in the interval $[-b_1, b_1]$.
It then follows that $x \mapsto \Im D(x+ic)$ is zero at $x=\pm b_1$, takes
a global maximim  at $\pm x^*$ and has
a local minimum at $x=0$. By direct calculation
\[ \Im D(ic) = \frac{1}{2\pi} \int_{b_1}^0 \sqrt{b_1^2-x^2}
	\left(x^3 - 0.5171 x \right) dx  > 0. \]
Thus $\Im D(x+ic) > 0$ for $x \in (-b_1, b_1)$ and it follows that the
critical trajectory $\gamma_1$ does not cross the horizontal interval from $z_1$ 
to $z_2$.

Next, we consider the tilted line segment of the triangle
parametrized by 
\[ z = x + i(c-b-x), \qquad -b < x < 0. \]  
We evaluate $\Re Q$ along this segment with the parameter values
$a = a_1$, $b=b_1$, etc. as given in Lemma \ref{LQ2}. We find that
\begin{multline}
\Re Q(x + i(c_1-b_1-x)) \\
=-4x^8+\frac{8}{21}B_1(3-\sqrt{36+6\sqrt{30}})x^7 -56(c_1-b_1)^3x^5+70(c_1-b_1)^4x^4 \\
	+\left(\frac{429}{1029}+\frac{60}{343}\sqrt{30}+\left(\frac{202}{3087}-\frac{59}{1372}\sqrt{30}\right)
		\sqrt{36+6\sqrt{30}}\right)B_1^2x  \\
	+(2-28(c_1-b_1)^5)x^3 +\frac{5}{21952}(5+\sqrt{30})(-1037232+192080\sqrt{30})^{3/5}, 
\end{multline}
where the constant $B_1$ is as defined in Lemma \ref{LQ2}. This is a polynomial of degree $8$ 
the roots of which can be computed numerically to be approximately
\footnote{The numerical computation of the roots was performed in Maple. Alternatively, 
we may proceed by factoring out the root at $x=-b_1$ analytically from the analytical expression 
for $\Re Q$, and subsequently bound the positive and negative terms in the remaining polynomial of degree $7$ in 
order to show that it has no roots in the interval $(-b_1,0)$. This reasoning avoids all numerical 
computations, yet is omitted for the sake of brevity.}
\begin{align*}
	-2.5741, \, -b_1, \, -0.6044 \pm 0.3452i, \,  -0.1997 \pm 0.3835i, \, 0.3469, \, 1.7393. 
\end{align*}
There are four real roots and none of them are in the interval $(-b_1, 0)$. Since there
are two roots to the right and the leading coefficient is negative, it follows that
\begin{equation} \label{ReQnegative} 
	\Re Q(x+i(c_1-b_1-x)) < 0, \qquad -b_1 < x < 0. 
	\end{equation}

From \eqref{ReQnegative} we obtain, with an appropriate choice for the square root,
\[ \frac{\pi}{4} < \arg Q^{1/2}(x+i(c_1 -b_1-x))  < \frac{3\pi}{4}, \qquad -b_1 < x < 0, \]
and so by \eqref{Dprime}
\[ -\frac{\pi}{4} <   \arg D'(x+i(c_1-b_1-x)) < \frac{\pi}{4}, \qquad -b_1 < x < 0. \]
Then also 
\[ - \frac{\pi}{2} < \arg \left[(1-i) \int_{0}^x D'(s+i(c_1-b_1-s)) \right] < 0, \qquad -b_1 < x < 0. \]
which implies after an integration that 
\begin{align*} 
	D(x+i(c_1-b_1-x)) = (1-i) \int_{0}^x D'(s+i(c_1-b_1-s)) \, ds, \qquad -b_1 < x < 0,
	\end{align*}
is in the lower half-plane, i.e., $\Im D(z) < 0$ for $z = x+i(c_1-b_1-x)$ in  
the tilted line segment.  Thus the critical trajectory $\gamma_1$ cannot escape from this side 
either. 

Therefore, $\gamma_1$ has to come to  the imaginary axis  and by symmetry  connect to $z_2$.
\end{proof}

\subsection{Lemma on analytic extensions}

The next step in the proof consists of showing how the critical trajectories extend to infinity. 
Since $z_1$ and $z_2$ are simple zeros, the trajectory $\gamma_p$ has an analytic extension
to a vertical trajectory of $-Q_p(z) dz^2$.
For the case $p=1$, we show below that the analytic extension of the critical trajectory $\gamma_1$ 
belongs to the class $\mathcal T_{3,1}$. 
Similar arguments show that  the analytic extension of $\gamma_2$ belongs to the class $\mathcal T_{4,5}$.

\begin{Lem}\label{extension_V2}
The critical trajectory $\gamma_1$ from $z_1$ to $z_2$ has an extension to a curve
\[
 \Gamma_1 = \Gamma_{1,1} \cup \gamma_1 \cup \Gamma_{1,2}
\]
where for $i=1,2$, $\Gamma_{1,i}$ is a vertical trajectory of $-Q_1(z) dz^2$ that extends from $z_i$ to infinity.

The curve $\Gamma_1$ belongs to the class $\mathcal T_{3,1}$.
\end{Lem}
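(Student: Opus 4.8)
The plan is to follow the template already set by the cubic case (Lemma \ref{extension}), adapting it to the richer zero structure of $Q_1$. The key object is again $D(z) = \frac{1}{\pi i}\int_{z_1}^z Q_1(s)^{1/2}\,ds$, whose level lines $\Im D = \text{const}$ are the horizontal trajectories and $\Re D = \text{const}$ the vertical trajectories. At the simple zeros $z_1$ and $z_2$ the critical trajectory $\gamma_1$ (constructed in Lemma \ref{V2_OneCut}) extends analytically to vertical trajectories $\Gamma_{1,1}$ and $\Gamma_{1,2}$, on which $\Re D$ is constant and, because there are no further zeros of $Q_1$ along them once we leave $z_1$ or $z_2$, $\Im D$ is strictly monotone. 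So each $\Gamma_{1,i}$ meets the level set $\Im D = 0$ only at $z_i$ itself, hence does not re-cross $\gamma_1$ or the symmetric picture; the whole of $\Gamma_1 = \Gamma_{1,1}\cup\gamma_1\cup\Gamma_{1,2}$ is a simple unbounded arc.

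The substance of the proof is to pin down the \emph{directions at infinity} of $\Gamma_{1,1}$ and $\Gamma_{1,2}$, and to check they are the two sectors defining $\mathcal T_{3,1}$. By Lemma \ref{lem:angles} and \eqref{angles2}, with $d=5$ the possible asymptotic angles for horizontal trajectories are $\theta_j = j\pi/5$, $j=-4,\dots,5$, and the vertical trajectories go out at the intermediate angles $\theta_j + \frac{\pi}{10}$. I would first locate all the unbounded \emph{critical} trajectories: each of the two simple zeros emits three critical trajectories (one of which is $\gamma_1$), and each of the three double zeros $z_0, z_3, z_4$ emits four critical trajectories, all characterized by $\Im D = \text{(the appropriate real constant)}$ — but by symmetry and by the computation of $F(y) = \Im D(iy)$ along the imaginary axis (exactly as in Lemma \ref{LQ1_candidate}) one controls which of these land on the imaginary axis and which escape to infinity. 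Counting: $\gamma_1$ and its mirror account for the connection $z_1 \leftrightarrow z_2$; the remaining emanating critical trajectories must go to infinity, and together with Lemma \ref{differentD} (two distinct trajectories ending at the same angle carry distinct values of $\Im D$, hence cannot both have $\Im D = 0$) this forces them into distinct directions among the $\theta_j$. Then the two vertical trajectories $\Gamma_{1,i}$, squeezed between consecutive critical trajectories, must exit at the remaining intermediate angles $\theta_j + \frac{\pi}{10}$, which a direct check against Figure \ref{Fig_sect_Gamma_A} identifies with the sectors $S_3$ (for $i=1$, left half-plane) and $S_1$ (for $i=2$, right half-plane). A Teichmüller-lemma argument (as in the consequence of Theorem \ref{Teichmuller} stated after it) is the clean way to confirm that the domains bounded by these trajectories contain no zeros of $Q_1$, so that the angular openings are exactly $\frac{\pi}{5}$ and the directions are forced.

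Concretely I would organize it as: (1) recall $\Gamma_{1,i}$ is a vertical trajectory, $\Re D$ constant and $\Im D$ strictly monotone on it, so it runs from $z_i$ to infinity without self-intersection and without re-meeting $\gamma_1$; (2) use the explicit parametrizations of the sides of the locking triangle and of the imaginary axis from the proof of Lemma \ref{V2_OneCut}, plus the sign of $F(y)=\Im D(iy)$, to determine which emanating critical trajectories reach the imaginary axis and which escape, thereby accounting for all $2\cdot 3 + 3\cdot 4 = 18$ critical-trajectory ends minus the bounded ones; (3) invoke Lemma \ref{differentD} to separate the unbounded $\Im D = 0$ trajectories into distinct directions $\theta_j$; (4) use Teichmüller's lemma to show the region between $\Gamma_{1,1}$ (resp.\ $\Gamma_{1,2}$) and its neighboring critical trajectory contains no zero of $Q_1$, forcing $\Gamma_{1,i}$ out at the unique intermediate angle in that gap; (5) read off from the numerics of Lemma \ref{LQ2} (the signs of $c_1$, $e_1$, etc.) and Figure \ref{Fig_sect_Gamma_A} that these angles place $\Gamma_{1,1}$ in $S_3$ and $\Gamma_{1,2}$ in $S_1$, so $\Gamma_1 \in \mathcal T_{3,1}$. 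The main obstacle is step (2)–(4): with three double zeros there are many more candidate trajectories than in the cubic case, and bookkeeping which of the $18$ critical-trajectory ends go where — while ruling out spurious recurrent or mutually-blocking configurations — is delicate; the cleanest route is to let symmetry halve the work and let Teichmüller's lemma and Lemma \ref{differentD} do the rest, rather than attempting to trace each trajectory by hand.
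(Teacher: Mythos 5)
Your toolkit is the right one (vertical trajectories, the ten asymptotic directions, symmetry, Teichm\"uller's lemma), but two steps of your plan do not go through. First, in step (3) you want Lemma \ref{differentD} to ``force the unbounded critical trajectories into distinct directions among the $\theta_j$.'' That lemma only separates trajectories carrying the \emph{same} value of $\Im D$; the critical trajectories emanating from the double zeros $z_0,z_3,z_4$ carry constants of $\Im D$ that differ from the value $0$ attached to $z_1,z_2$, so nothing prevents several of them from sharing an asymptotic direction. Indeed they must: there are up to $2\cdot 3+3\cdot 4-2=16$ unbounded critical-trajectory ends and only $10$ directions, so your counting argument cannot close. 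The paper avoids this global bookkeeping entirely: it rules out $\alpha_1,\beta_1$ reaching the imaginary axis by the closed-loop argument (Lemma \ref{No_loop} applied to a would-be loop through $z_1$ and $z_2$), and then confines $\alpha_1$, $\beta_1$, $\Gamma_{1,1}$ relative to the half-line $L=\{x+ic_1: x<-b_1\}$ by computing the signs of $\Re D'$ and $\Im D'$ on $L$ (both definite), which shows none of these curves can cross $L$. That confinement is what locates the double zero $z_3$ below $L\cup\gamma_1$ and makes the domain bounded by $\alpha_1$ and $\Gamma_{1,1}$ zero-free, so that Teichm\"uller's lemma can be applied; your step (4) tries to run Teichm\"uller's lemma in the opposite direction (to \emph{conclude} the domain is zero-free), which is circular.

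Second, and more seriously, even after the Teichm\"uller step one is left with two admissible outcomes: $\Gamma_{1,1}$ ending at $\epsilon_3=\tfrac{7\pi}{10}$ (which lies in $S_2'$, where $\Re V\to-\infty$, and would not give a contour in $\mathcal T_{3,1}$) or at $\epsilon_4=\tfrac{9\pi}{10}$ (which lies in $S_3$). Your proposal has no mechanism to exclude $\epsilon_3$; ``reading off the numerics and the figure'' is not a proof. The paper resolves this by a three-case analysis on the third trajectory $\beta_1$ from $z_1$ according to its position relative to the double zero $z_3$ (ends at infinity with $z_3$ outside the enclosed domain, ends at infinity with $z_3$ inside, or terminates at $z_3$), using Teichm\"uller's lemma in each case together with the strict monotonicity of $\Im D$ on $L$ to show that $\alpha_1$ must end at $\theta_4$, hence $\Gamma_{1,1}$ at $\epsilon_4$. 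This case analysis is the core of the argument and is missing from your plan.
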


\begin{figure}[!t]
\centering
\begin{overpic}
[scale=0.4,unit=1mm]%
{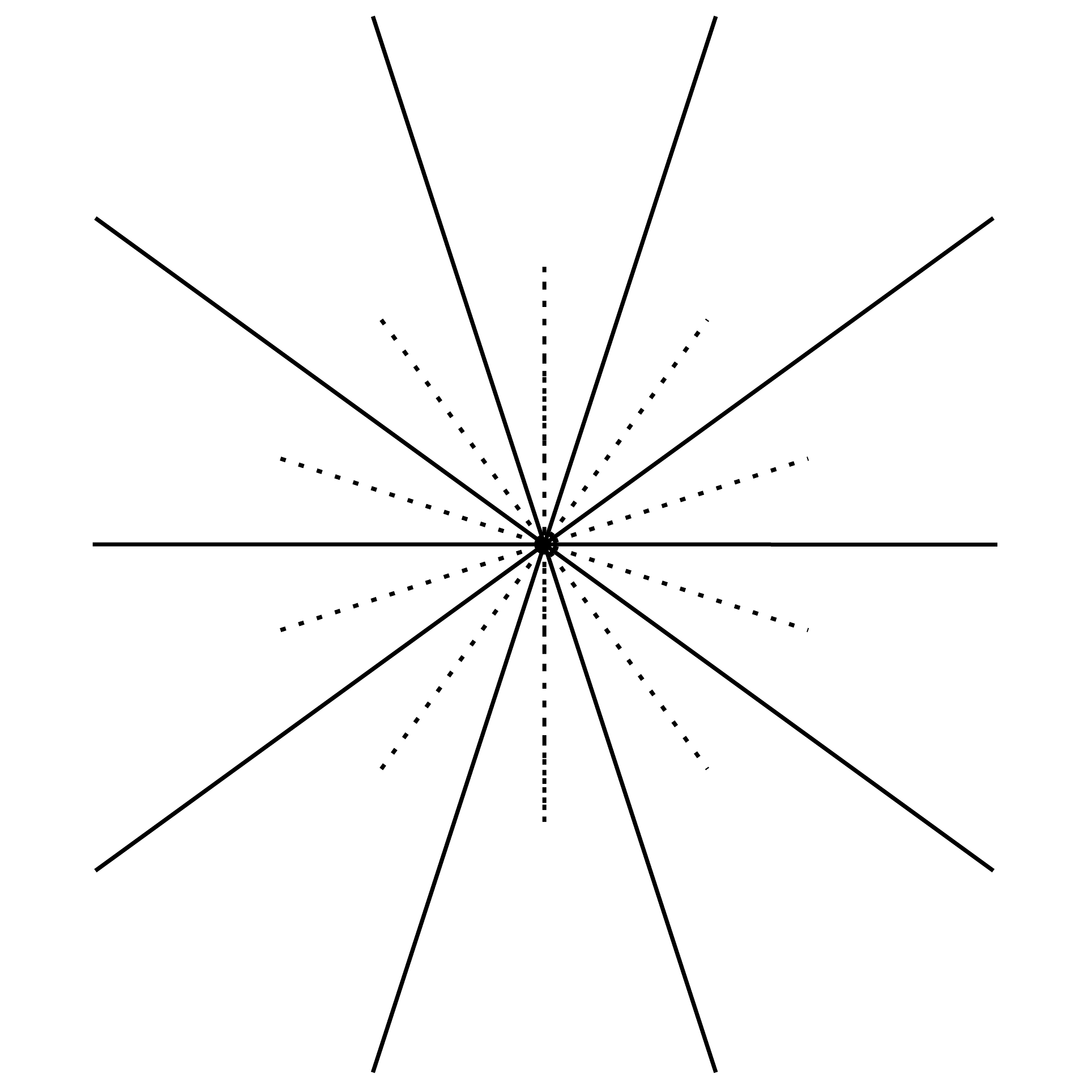}
\put(2,80){$\theta_4=\frac{4\pi}{5}$}
\put(25,100){$\theta_3=\frac{3\pi}{5}$}
\put(60,100){$\theta_2=\frac{2\pi}{5}$}
\put(100,80){$\theta_1=\frac{\pi}{5}$}
\put(99,46){$\theta_0=0$}
\put(-9,46){$\theta_5=\pi$}
\put(-12,15){$\theta_{6}=-\frac{4\pi}{5}$}
\put(20,-1){$\theta_{7}=-\frac{3\pi}{5}$}
\put(65,-5){$\theta_{8}=-\frac{2\pi}{5}$}
\put(92,15){$\theta_{9}=-\frac{\pi}{5}$}
\put(79,58){$\epsilon_0=\frac{\pi}{10}$}
\put(1,58){$\epsilon_4=\frac{9\pi}{10}$}
\put(60,71){$\epsilon_1=\frac{3\pi}{10}$}
\put(43,81){$\epsilon_2=\frac{\pi}{2}$}
\put(23,73){$\epsilon_3=\frac{7\pi}{10}$}
\put(1,38){$\epsilon_{5}=\frac{11\pi}{10}$}
\put(15,20){$\epsilon_{6}=-\frac{7\pi}{10}$}
\put(40,11){$\epsilon_{7}=-\frac{\pi}{2}$}
\put(63,18){$\epsilon_{8}=-\frac{3\pi}{10}$}
\put(69,38){$\epsilon_{9}=-\frac{\pi}{10}$}
\end{overpic}
\caption{The solid lines show the directions $\theta_j$ along which trajectories of $-Q(z)dz^2$ can tend to infinity. 
The dotted lines show the directions $\epsilon_j$ along which vertical trajectories can tend to infinity.
\label{dir_infty}}
\end{figure}

\begin{proof}
Recall from Lemma \ref{lem:angles} that unbounded trajectories of  $-Q_1(z)dz^2$ tend to infinity in one 
of ten asymptotic directions:
\[
\theta_j=\frac{j\pi}{5},\qquad \text{for } j=0,1, \ldots, 9.
\]
These directions are shown in solid lines in Figure \ref{dir_infty}. 
Unbounded vertical trajectories end at infinity under angles
\[
\epsilon_j=\theta_j+\frac{\pi}{10}, \qquad \text{for } j=0,1, \ldots,9.
\]
These directions are plotted in the same figure in dotted lines.

We recall the local structure of the trajectories at $z_1$
as shown in Figure \ref{crit_ext2}. The trajectory $\gamma_1$ starts at $z_1$
at the angle
\[ \psi \approx -0.1305 \pi. \]
The other two trajectories are at angles $\psi \pm \frac{2\pi}{3}$, and they are
labelled $\alpha_1$ and $\beta_1$ as also shown in Figure \ref{crit_ext2}.
The vertical trajectory $\Gamma_{1,1}$ is the analytic continuation of $\gamma_1$ 
and it starts from $z_1$ at an angle
\[ \psi + \pi \approx 0.8695 \pi. \]

If one of $\alpha_1$ and $\beta_1$ would come to the imaginary axis, then, by symmetry, it would continue
to $z_2$, and then together with $\gamma_1$ it would form a closed contour, which is
impossible by Lemma \ref{No_loop}. Thus $\alpha_1$ and $\beta_1$ remain in the left half-plane,
and similarly $\Gamma_{1,1}$ is fully in the left half-plane.

We prove that $\alpha_1$, $\beta_1$, and $\Gamma_{1,1}$ 
do not  intersect the horizontal half line 
\begin{equation} \label{eqL} 
	L = \{z=x+ic_1 \mid x<-b_1\}. 
	\end{equation}

\begin{figure}[t]
\centering
\begin{overpic}[scale=0.8,unit=1mm]%
{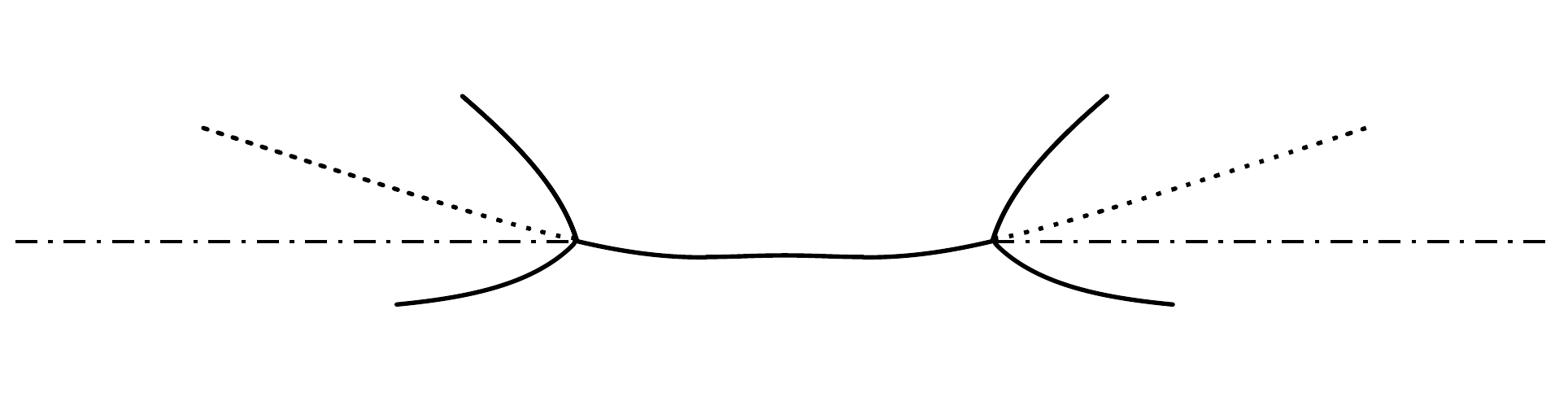}
\put(30,20){$\alpha_1$}
\put(28,5){$\beta_1$}
\put(50,11){$\gamma_1$}
\put(37,11){$z_1$}
\put(61,11.5){$z_2$}
\put(17,17){$\Gamma_{1,1}$}
\put(3,11){$L$}
\end{overpic}
\caption{The figure shows the critical trajectories for the quadratic differential $-Q_1(z)dz^2$ 
(solid line) and the vertical trajectory $\Gamma_{1,1}$. The critical trajectories and the vertical
trajectory do not intersect the horizontal half-line $L$ that is plotted dashed-dotted. \label{crit_ext2} }
\end{figure}

To that end we recall that $\Im D = 0$ on $\alpha_1$ and $\beta_1$ and $\Re D = 0$
on $\Gamma_{1,1}$ where $D$ is given by \eqref{Dzfromz1}. We then have by \eqref{Q_sym} and \eqref{Dzfromz1}
\[ D'(z) = \frac{1}{\pi i} Q(z)^{1/2}
	  = \frac{1}{2\pi } \left[ (z-z_1)(z-z_2) \right]^{1/2} (z-z_0) (z-z_3)(z-z_4), \]
with the $z_j$ as in \eqref{Q_zeros}. Using  $e = \frac{1}{2}(a-c)$ as in \eqref{eq_e}, we then get
\begin{align*} D'(x+ic) & = \frac{1}{2\pi} \sqrt{x^2-b^2} (x+i(c+a))((x+i(c-e))^2-d^2) 
\end{align*}
which implies that for $x < -b$,
\begin{align} \label{ReDprime} 
	\Re D'(x+ic) & = \frac{1}{2\pi} \sqrt{x^2-b^2} 
		\left(x^3  - ( \tfrac{21}{4} c^2 + \tfrac{1}{2} ac - \tfrac{3}{4} a^2 + d^2)x \right) \\
			\label{ImDprime}
	\Im D'(x+ic) & = \frac{1}{2 \pi} \sqrt{x^2-b^2} 
		\left(4 cx^2 + \tfrac{1}{4}(-a-c)(a^2-6ac+9c^2 +4 d^2) \right).
\end{align}

Now we take the values $a=a_1$, $b=b_1$, etc., as given in Lemma \ref{LQ2}.
Then it turns out that the coefficients in the quadratic expression in \eqref{ImDprime} 
are both $> 0$ and therefore
\begin{equation} \label{ImDprime2} 
	\Im D'(x+ic_1)  > 0, \qquad x < -b_1. 
	\end{equation}
The cubic expression in \eqref{ReDprime} is approximately $x^3  -  0.5171  x$
and this is negative for $x < -b_1 \approx -1.34$. Thus
\begin{equation} \label{ReDprime2} 
	\Re D'(x+ic_1)  < 0, \qquad x < -b_1. 
	\end{equation}
Since $D(z_1) = D(-b_1 + ic_1) = 0$, it follows from the above that
$\Im D(z) < 0$ and $\Re D(z) > 0$ for $z= x+ ic_1$, $x < -b_1$.
Thus $\alpha_1$, $\beta_1$, and $\Gamma_{1,1}$ do not intersect $L$, as claimed.

From this we conclude that $\alpha_1$ and $\Gamma_{1,1}$ are  in the domain  
lying above $L \cup \gamma_1$
in the left half-plane, while $\beta_1$ is in the domain below  $L \cup \gamma_1$
in the left half-plane, see also Figure \ref{crit_ext2}. Thus these curves tend to infinity in these domains, or
end at another zero of $Q$.

The double zero $z_3 = -d_1 + e_1 i$ is the only other zero of $Q$ that is in the left half-plane.
It lies in the domain below $L \cup \gamma_1$,
as is clear from the values of $d_1$ and $e_1$ in  Lemma \ref{LQ2}. Therefore $\alpha_1$ and $\Gamma_{1,1}$
tend to infinity in the domain above $L \cup \gamma_1$. They enclose an unbounded domain
that is free from zeros of $Q$. At $z_1$ there is an angle $\frac{\pi}{3}$. 
By Teichm\"uller's lemma, see Theorem \ref{Teichmuller}, the domain makes an angle $\frac{\pi}{10}$ at infinity. 
This leaves us with two possibilities.
Either $\alpha$ ends at an angle $\theta_3$ and $\Gamma_{1,1}$ ends at angle $\theta_3 + \frac{\pi}{10} = \epsilon_3$
at infinity, or $\alpha$ ends at an angle $\theta_4$ and $\Gamma_{1,1}$ ends at angle 
$\theta_4 + \frac{\pi}{10} = \epsilon_4$ at infinity.

Now for $\beta_1$ there are three possibilities depending on how it is situated with respect to the double zero $z_3$ 
of $Q$.\footnote{It is Case I that actually happens, as a numerical computation of $\beta_1$ shows.}
\begin{description}
\item[Case 1:] $\beta_1$ ends at infinity and the domain bounded by $\alpha_1$ and $\beta_1$
does not contain $z_3$.
\item[Case 2:] $\beta_1$ ends at infinity and the domain bounded by $\alpha_1$ and $\beta_1$
contains $z_3$.
\item[Case 3:] $\beta_1$ ends at the double zero $z_3$ of $Q$.
\end{description}

In Cases 1 and 2, the trajectory $\beta_1$ tends to infinity at  one of the angles 
$\theta_5$, $\theta_{6}$ or $\theta_7$, see Figure \ref{dir_infty},
since it remains below the half-line $L$ in the left half-plane. Then we have a domain $\Omega$ that is 
bounded by $\alpha_1$ and $\beta_1$, and that makes an angle $\frac{2\pi}{3}$ at $z_1$. 
In Case 1 there is no zero of $Q$ in $\Omega$ and it follows from 
 Theorem \ref{Teichmuller} that $\alpha_1$ and $\beta_1$ end at infinity in consecutive angles $\theta_j$
and $\theta_{j +1}$. Then the only possibility is that $\alpha_1$ ends at angle $\theta_4$ and
$\beta$ ends at angle $\theta_5$. In that case $\Gamma_{1,1}$ ends at angle $\epsilon_4$.

In Case 2 the double zero $z_3$ is in $\Omega$. Then by Theorem \ref{Teichmuller}
the trajectories $\alpha_1$ and $\beta_1$ end at angles $\theta_j$ and $\theta_j + \frac{3\pi}{5} = \theta_{j+3}$
for some $j$. 
Thus if $\alpha_1$ ends at angle $\theta_3$ then $\beta_1$ ends at angle $\theta_6$
and if $\alpha_1$ ends at angle $\theta_4$ then $\beta_1$ ends at angle $\theta_7$.
Since $z_3$ is a double zero there are four trajectories emanating from $z_3$. These trajectories
cannot intersect with $\alpha_1$ or $\beta_1$, and they cannot form closed loops either. So they have to
extend to infinity in four different directions, and these directions are bounded by the directions
at the angles $\theta_j$ and $\theta_{j+3}$ of $\alpha_1$ and $\beta_1$. 
Thus from $z_3$ there is a trajectory ending at infinity at  each of the angles 
$\theta_j$, $\theta_{j+1}$, $\theta_{j+2}$ and $\theta_{j+3}$.
Since $\Im D \equiv \text{const}$ on these four trajectories from $z_3$ (with the same constant
on each of the trajectories), and since $\Im D$ is strictly increasing on the half-line $L$,
see \eqref{ImDprime2}, only one of the trajectories can intersect with $L$. It implies
that the angles $\theta_3$ and $\theta_4$ cannot both be reached by trajectories from $z_3$,
see Figure \ref{dir_infty}, which means that $\alpha_1$ cannot end at angle $\theta_3$.
Thus $\alpha_1$ ends at angle $\theta_4$ and then $\Gamma_{1,1}$ ends at angle $\epsilon_4$ also in Case 2.

In Case 3 the trajectory $\beta_1$ ends at $z_3$. Then we can continue $\beta_1$
with another  critical trajectory $\delta_1$ from $z_3$ that is necessarily unbounded. 
We do it in such a way
that we obtain a $Q$-polygon $\Omega$ bounded by $\alpha_1$, $\beta_1$ and $\delta_1$
that makes an inner angle $\frac{2\pi}{3}$ at $z_1$ and an inner angle $\frac{\pi}{2}$
at $z_3$. Then $\Im D = 0$ on $\delta_1$ and so $\delta_1$ does not intersect the half-line $L$.
It then ends at infinity at an angle $\theta_5$, $\theta_{6}$ or $\theta_{7}$.
Since $\Omega$ does not contain any zeros, we  find by Theorem~\ref{Teichmuller}
that $\alpha_1$ and $\delta_1$ make an angle $\frac{\pi}{5}$ at infinity. Then
just as in Case 1 we conclude that $\alpha_1$ ends at angle $\theta_4$ and
therefore $\Gamma_{1,1}$ ends at angle $\epsilon_4$. 

Thus in all cases we have that $\Gamma_{1,1}$ ends at infinity at an angle $\epsilon_4$.
Because of the  symmetry in the imaginary axis, $\Gamma_{2,1}$ then ends at $\epsilon_0$, 
and it follows that $\Gamma_1$ belongs to  the class $\mathcal T_{3,1}$.
\end{proof}

An entirely similar method of proof shows the corresponding result for case $p=2$.
\begin{Lem}\label{extension_V2_b}
The critical trajectory $\gamma_2$ from $z_1$ to $z_2$ has an extension to a curve
\[
 \Gamma_2 = \Gamma_{2,1} \cup \gamma_2 \cup \Gamma_{2,2}
\]
where for $i=1,2$, $\Gamma_{2,i}$ is a vertical trajectory of $-Q_2(z) dz^2$ that 
extends from $z_i$ to infinity.

The curve $\Gamma_2$ belongs to the class $\mathcal T_{4,5}$.
\end{Lem}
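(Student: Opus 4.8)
The plan is to mirror the proof of Lemma \ref{extension_V2}, now working with the polynomial $Q_2$ and the parameter values $a_2,b_2,c_2,d_2,e_2$ from Lemma \ref{LQ2}. First I would record the local picture at $z_1$: by \eqref{dir_crit1} three critical trajectories of $-Q_2(z)dz^2$ leave $z_1$ at angles $\tfrac{2\pi}{3}$ apart, exactly one of which, call it $\gamma_2$, enters the triangle of Lemma \ref{V2_OneCut} (which for $p=2$ opens upward, with tilted sides of slope $+1$); the other two I denote $\alpha_2$ and $\beta_2$. The vertical trajectory $\Gamma_{2,1}$ is the analytic continuation of $\gamma_2$ through the simple zero $z_1$, leaving $z_1$ in the direction opposite to $\gamma_2$. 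Exactly as in Lemma \ref{extension_V2}, if any of $\alpha_2,\beta_2,\Gamma_{2,1}$ met the imaginary axis it would, by the symmetry $z\mapsto-\bar z$, continue to $z_2$ and, together with $\gamma_2$, bound a closed $Q$-polygon, contradicting Lemma \ref{No_loop}; hence all three remain in the open left half-plane.

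Next I would confine these three curves using an auxiliary edge through $z_1$ along which the function $D(z)=\tfrac{1}{\pi i}\int_{z_1}^z Q_2^{1/2}(s)\,ds$ of \eqref{eq:D} is controlled. With $e=\tfrac12(a-c)$ from \eqref{eq_e} one has, as in \eqref{ReDprime}--\eqref{ImDprime}, explicit factored expressions for $\Re D'$ and $\Im D'$ along such an edge; substituting the $p=2$ values and integrating outward from $z_1$ one fixes the signs of $\Re D$ and $\Im D$ along the edge, so that $\alpha_2,\beta_2$ (which lie on $\{\Im D=0\}$) and $\Gamma_{2,1}$ (which lies on $\{\Re D=0\}$) cannot cross it. The only other zero of $Q_2$ in the left half-plane is the double zero $z_3=-d_2+e_2 i$, and the numerical values of $d_2,e_2$ from Lemma \ref{LQ2} determine on which side of this edge (together with $\gamma_2$) it sits, hence which of $\alpha_2,\beta_2$ can run into $z_3$.

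With the regions pinned down I would then invoke Teichm\"uller's lemma (Theorem \ref{Teichmuller}) together with the lists of asymptotic directions $\theta_j=\tfrac{j\pi}{5}$ for trajectories and $\epsilon_j=\theta_j+\tfrac{\pi}{10}$ for vertical trajectories (Lemma \ref{lem:angles}, Figure \ref{dir_infty}), and run the same three-case analysis as in Cases 1--3 of the proof of Lemma \ref{extension_V2} according to whether $\beta_2$ goes to infinity avoiding $z_3$, goes to infinity enclosing $z_3$, or terminates at $z_3$; in each case the monotonicity of the relevant part of $D$ along the auxiliary edge rules out the spurious asymptotic branch. The outcome should be that $\Gamma_{2,1}$ tends to infinity in the direction $\epsilon_6=-\tfrac{7\pi}{10}$, which points into $S_4$; by the symmetry in the imaginary axis $\Gamma_{2,2}$ then tends to infinity in the direction $\epsilon_8=-\tfrac{3\pi}{10}$, which points into $S_5$. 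Hence $\Gamma_2=\Gamma_{2,1}\cup\gamma_2\cup\Gamma_{2,2}$ belongs to $\mathcal T_{4,5}$, as claimed.

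The step I expect to be the main obstacle is the sign analysis of the second paragraph. Since $c_2<0$ (whereas $c_1>0$), the quadratic controlling $\Im D'$ along a horizontal half-line to the left of $z_1$ is no longer of one sign, so the horizontal edge used for $p=1$ does not work without change; one must instead take, for instance, the extension beyond $z_1$ of a tilted side of the triangle (in the spirit of the $\Re Q$ computation carried out in the proof of Lemma \ref{V2_OneCut}) or else a sharper estimate, and then re-determine the position of the double zero $z_3$ relative to this edge, since the orientation of the entire case analysis is mirrored compared with $p=1$. Once the right confining edge has been identified, the remainder is routine bookkeeping with Teichm\"uller's lemma and the imaginary-axis symmetry.
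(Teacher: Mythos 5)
Your proposal follows exactly the route the paper intends: the paper offers no written proof of this lemma, only the remark that ``an entirely similar method of proof'' to Lemma \ref{extension_V2} applies, and your plan is precisely that adaptation, with the correct target directions $\epsilon_6=-\tfrac{7\pi}{10}$ (into $S_4$) and $\epsilon_8=-\tfrac{3\pi}{10}$ (into $S_5$). Moreover, your caveat in the last paragraph is a genuine and accurate observation that the paper glosses over: with the $p=2$ values one has $c_2<0$, so the quadratic $4cx^2+\tfrac14(-a-c)(a^2-6ac+9c^2+4d^2)$ in \eqref{ImDprime} changes sign on $x<-b_2$ (numerically near $x\approx-1.19$), and the cubic in \eqref{ReDprime} also changes sign further out, so the horizontal half-line $L$ cannot serve as the confining edge and must be replaced, e.g.\ by a tilted edge as you suggest. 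The one thing still owed is the actual construction of that replacement edge, the verification of the sign of $\Re Q$ (or of $\Re D'$, $\Im D'$) along it, and the determination of which side of it the double zero $z_3=-d_2+e_2i$ lies on, since the whole Teichm\"uller case analysis hinges on these facts; until that computation is carried out the argument is a (well-founded) plan rather than a proof, though it is no less complete than what the paper itself records.
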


\subsection{Proof of Theorem \ref{Thm_single_cut_V2}}

Having Lemmas \ref{LQ2}, \ref{extension_V2}, and \ref{extension_V2_b} 
we can now complete the proof of Theorem \ref{Thm_single_cut_V2},
in a similar way as we completed the proofs of parts (a) and (b) of Theorem \ref{Thm_single_cut_V1}.

\begin{proof}
For both cases $p=1$ and $p=2$, Lemma \ref{LQ2} establishes that there is a critical trajectory
 $\gamma_p$ that connects the zeros of $Q_p(z)$.

By Lemma \ref{extension_V2} the trajectory $\gamma_1$ has analytic extension to an unbounded contour
\[ \Gamma_1 = \Gamma_{1,1} \cup \gamma_1 \cup \Gamma_{1,2} \]
in the class $\mathcal T_{3,1}$, and by Lemma \ref{extension_V2_b}
the trajectory $\gamma_2$ has analytic extension to an unbounded contour
\[ \Gamma_2 = \Gamma_{2,1} \cup \gamma_2 \cup \Gamma_{2,2} \]
in the class $\mathcal T_{4,5}$. 

Then as in the proof of Theorem \ref{Thm_single_cut_V1} we can apply
the method in the proof of \cite[Theorem 2.2]{DHK} to show that 
$\Gamma_p$ has the $S$-property in the external field $\Re V$
and that $\supp(\mu_{\Gamma_p}) = \gamma_p$ for $p=1,2$. Thus by Theorem \ref{theoremGR} 
the zeros of $P_n$ accumulate on $\gamma_p$ as $n \to \infty$.
\end{proof}

\subsection*{Acknowledgements}
The authors are supported by FWO Flanders projects G.0641.11.

The second author is also supported by KU Leuven Research Grant OT/12/073, 
the Belgian Interuniversity Attraction Pole P07/18, FWO Flanders projects
G.0641.11 and G.0934.13, and by Grant No. MTM2011-28952-C02 of the Spanish 
Ministry of Science and Innovation.

\end{document}